\newtheorem{theorem}{Theorem}[subsection]
\newtheorem{proposition}[theorem]{Proposition}
\newtheorem{conjecture}[theorem]{Conjecture}
\newtheorem{corollary}[theorem]{Corollary}
\newtheorem{lemma}[theorem]{Lemma}
\theoremstyle{definition}
\newtheorem{definition}[theorem]{Definition}
\numberwithin{equation}{subsection}
\newcommand{\Ext}{\operatorname{Ext}}
\newcommand{\Hom}{\operatorname{Hom}}
\newcommand{\St}{\operatorname{St}}
\begin{document}

\title[Good $(p,r)$-Filtrations]
{\bf On good $\bf{(p,r)}$-filtrations for rational $G$-modules}

\begin{abstract} In this paper we investigate Donkin's $(p,r)$-Filtration Conjecture, and present two proofs of the ``if" direction of the statement when $p\geq 2h-2$. One 
proof involves the investigation of when the tensor product between the Steinberg module and a simple module has a good filtration. One of our main results 
shows that this holds under suitable requirements on the highest weight of the simple module. The second proof involves recasting Donkin's Conjecture in terms of 
the identifications of projective indecomposable $G_{r}$-modules with certain tilting $G$-modules, and establishing necessary cohomological criteria for the 
$(p,r)$-filtration conjecture to hold. 
\end{abstract}

\author{\sc Tobias Kildetoft}
\address 
{Department of Mathematics\\ Aarhus University\\
DK- 8000 Aarhus C, Denmark}
\thanks{Research of the first author was supported by QGM (Centre for Quantum Geometry of Moduli Spaces) funded by the Danish National Research Foundation}
\email{tobias.kildetoft@gmail.com}

\author{\sc Daniel K. Nakano}
\address
{Department of Mathematics\\ University of Georgia \\
Athens\\ GA~30602, USA}
\thanks{Research of the second author was supported in part by
NSF grant DMS-1002135}
\email{nakano@math.uga.edu}

\date\today
\thanks{2010 {\em Mathematics Subject Classification.} Primary
20J06;
Secondary 20G10}
\maketitle
\section{Introduction}

\subsection{} Let $G$ be a simple, simply connected algebraic group scheme over the algebraically closed field $k$ of characteristic $p > 0$. It is well-known that the category of rational $G$-modules is not semisimple.  
Thus, one of the major open problems is to determine multiplicities of composition factors in modules which naturally arise from characteristic zero through reduction modulo $p$. The modules of interest are the 
induced modules  $\nabla(\lambda) = \rm{Ind}_B^G(\lambda)$ where $\lambda\in X_{+}$ (dominant integral weight) and $B$ is a Borel subgroup (corresponding to the negative roots). The characters of $\nabla(\lambda)$ are given by Weyl's character formula, 
and $\nabla(\lambda)$ has a simple socle $L(\lambda)$ where each finite-dimensional simple $G$-module is isomorphic to a unique such $L(\lambda)$. 

The modules $\nabla(\lambda)$ form the building blocks for studying injective modules, and it is natural to consider modules which admit filtrations whose sections are $\nabla(\lambda)$ for suitable $\lambda\in X_+$. These 
filtrations are called good filtrations. For each $\lambda\in X_+$ with unique decomposition $\lambda = \lambda_0 + p^r\lambda_1$ with $\lambda_0\in X_r$ ($p^{r}$th restricted weights) and $\lambda_1\in X_+$, one 
can define $\nabla^{(p,r)}(\lambda) = L(\lambda_0)\otimes \nabla(\lambda_1)^{(r)}$ where $(r)$ denotes the twisting of the module action by the $r$th Frobenius morphism. A $G$-module $M$ has 
a good $(p,r)$-filtration if and only if $M$ has a filtration with factors of the form $\nabla^{(p,r)}(\lambda)$ for suitable $\lambda\in X_+$. Let $\St_r = L((p^r-1)\rho)$ (which is also isomorphic to $\nabla((p^r-1)\rho)$) be the $r$th Steinberg module. 
The following conjecture which was introduced by Donkin at an MSRI lecture in 1990 interrelates good filtrations with good $(p,r)$-filtrations via the Steinberg module. 

\begin{conjecture} \label{donkinconj} Let $M$ be a finite-dimensional $G$-module. Then $M$ has a good $(p,r)$-filtration if and only if $\St_r\otimes M$ has a good filtration.
\end{conjecture}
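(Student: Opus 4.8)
Since the statement is a conjecture and is open in small characteristic, I will aim only at the regime $p\geq 2h-2$ singled out in the abstract and treat the two implications separately; the ``only if'' implication is comparatively soft, while ``if'' is the real content. \emph{For ``only if'':} since the modules with a good filtration form a class closed under extensions, it suffices to treat a single section $M=\nabla^{(p,r)}(\la)=L(\la_0)\otimes\nabla(\la_1)^{(r)}$, for which
\[
\St_r\otimes M\;\cong\;\bigl(\St_r\otimes L(\la_0)\bigr)\otimes\nabla(\la_1)^{(r)}.
\]
In the range $p\geq 2h-2$ the module $\St_r\otimes L(\la_0)$ is a tilting $G$-module — this is where the bound on $p$ enters, through the known identification of such tensor products, equivalently of the $G_{r}$-projective covers $Q_{r}(\la_0)$, with indecomposable tiltings — so it has a good filtration; and $\nabla(\la_1)^{(r)}$ has a good filtration, so by Mathieu's theorem on tensor products the whole module does too.

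\emph{For ``if'':} I would exploit the tilting property again. Since $\St_r\otimes L(\nu_0)$ is tilting for every $\nu_0\in X_{r}$, the module $\St_r\otimes\nabla^{(p,r)}(\mu)$ has a good filtration and $\St_r\otimes\Delta^{(p,r)}(\mu)$ — with $\Delta^{(p,r)}(\mu):=L(\mu_0)\otimes\Delta(\mu_1)^{(r)}$ — has a Weyl filtration, for every $\mu\in X_{+}$; moreover the top section of $\St_r\otimes\nabla^{(p,r)}(\mu)$ is $\nabla\bigl((p^r-1)\rho+\mu\bigr)$, occurring once, and every other section $\nabla(\ga)$ has $\ga<(p^r-1)\rho+\mu$. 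Because $\{\ch\nabla^{(p,r)}(\mu)\}_{\mu\in X_{+}}$ is a $\bZ$-basis of the character ring, $\ch M$ has a unique expansion $\ch M=\sum_\mu a_\mu\,\ch\nabla^{(p,r)}(\mu)$ with $a_\mu\in\bZ$, and the task reduces to proving $a_\mu\geq 0$ and realizing the $a_\mu$ by an actual filtration. I would induct on $\dim M$: choose $\la$ maximal with $a_\la\neq 0$, show $\la$ is a highest weight of $M$ with a nonzero map $\Delta^{(p,r)}(\la)\to M$, and obtain a short exact sequence $0\to\Delta^{(p,r)}(\la)\to M\to M'\to 0$ for which $\St_r\otimes M'$ is again good-filtered; then $M'$ satisfies the hypotheses with $a_\la$ lowered by one and induction closes.

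The main obstacle is exactly this last step: neither the map $\Delta^{(p,r)}(\la)\to M$ nor the persistence of a good filtration on $\St_r\otimes M'$ is formal, because $\nabla^{(p,r)}$ and $\Delta^{(p,r)}$ do \emph{not} form an orthogonal standard/costandard pair — higher $\Ext$'s between them can be nonzero, owing to $G_{r}$-cohomology of tensor products of simple modules — so the standard highest-weight-category bookkeeping is unavailable. Concretely one must establish, and then descend, an $\Ext$-vanishing of the shape $\Ext^1_G\bigl(\St_r\otimes\Delta^{(p,r)}(\la),\St_r\otimes M'\bigr)=0$ (automatic from the Weyl-/good-filtered dichotomy) down to $\Ext^1_G(\Delta^{(p,r)}(\la),M')=0$, controlling the gap between $\Ext_G(-,N)$ and $\Ext_G(\St_r\otimes-,\St_r\otimes N)$ after taking $G_{r}$-fixed points and untwisting by Frobenius; this is where $p\geq 2h-2$, through the tilting description of the $Q_{r}(\nu_0)$, is used a second time. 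A more structural alternative — the ``second proof'' of the abstract — replaces this $\Ext$-juggling by matching $\St_r\otimes M$, as a $G$-module, with a direct sum of tilting modules of the form $Q_{r}(\nu_0)\otimes(\text{Frobenius twist})$ and reading off the twists, the requisite cohomological criteria being a repackaging of the same vanishing.
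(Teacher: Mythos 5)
The statement you are addressing is a conjecture, and the paper establishes only one of the two implications --- that a good $(p,r)$-filtration on $M$ forces a good filtration on $\St_r\otimes M$ --- and only for $p\geq 2h-2$. Your sketch of the converse implication (your ``if'') attacks the implication that is genuinely open, and, as you yourself concede, it does not close: the induction step requires precisely the $\Ext^1$-vanishing between $\Delta^{(p,r)}$- and $\nabla^{(p,r)}$-type objects that nobody knows how to produce, and in addition you peel off $\Delta^{(p,r)}(\lambda)$ as a \emph{submodule}, which would build a Weyl-type $(p,r)$-filtration rather than the good $(p,r)$-filtration the conjecture asks for. So the biconditional is not proven by your proposal, and the paper claims no proof of it either; that half of your write-up is a research programme, not an argument.

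In the implication you call ``only if'' there is a concrete error. You apply Mathieu's theorem to the factorization $(\St_r\otimes L(\lambda_0))\otimes\nabla(\lambda_1)^{(r)}$ after asserting that $\nabla(\lambda_1)^{(r)}$ has a good filtration. In general it does not: already in type $A_1$ the module $\nabla(1)^{(1)}=L(p)$ is two-dimensional with highest weight $p$, whereas any module with a good filtration in which the weight $p$ occurs must contain a copy of $\nabla(p)$, of dimension $p+1$. The correct route is the paper's Proposition~\ref{reductiontosimple}: one shows $\St_r\otimes\nabla(\lambda_1)^{(r)}\cong\nabla((p^r-1)\rho+p^r\lambda_1)$ is a direct summand of $\St_r\otimes\nabla(p^r\lambda_1)$, applies Mathieu to $\St_r\otimes L(\lambda_0)\otimes\nabla(p^r\lambda_1)$, and passes to the summand. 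Separately, your inference that $\St_r\otimes L(\lambda_0)$ is a tilting $G$-module ``because $Q_r(\lambda_0)\cong T(\hat{\lambda}_0)$'' is not formal: a $G_r$-decomposition into the $Q_r(\mu)$ does not automatically lift to a $G$-decomposition into tilting summands, and bridging that gap is exactly the content of Theorems~\ref{T:cohom-criteria} and \ref{T:DonkinConjecture2(-->)} (equivalently, of Andersen's original argument), i.e.\ the substantive step your sketch treats as given. With those two repairs your outline of this implication does agree in spirit with the paper's second, tilting-theoretic proof; the paper's first proof is different, proceeding instead through explicit weight bounds (Theorem~\ref{lambdaalpha0}) on when $\St_r\otimes L(\lambda)$ has a good filtration.
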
 

When $p\geq 2h-2$ (where $h$ is the Coxeter number of the root system $R$), Andersen \cite{andersen01} showed that if $M$ has a good $(p,r)$-filtration then  $\St_r\otimes M$ has a good filtration. 
The verification of the other direction of the conjecture appears to be much harder. A special case is that for any $\lambda\in X_+$, the module $\nabla(\lambda)$ has a good $p$-filtration. This is a special case since 
tensor products of modules with good filtrations again have good filtrations. Parshall and Scott \cite{parshallscottpfilt} have proved that $\nabla(\lambda)$ has a good $p$-filtration when the characteristic is large enough that the Lusztig character 
formula holds.

\subsection{}In this paper, we will primarily focus on issues related to the ``if'' direction of Conjecture~\ref{donkinconj}. First, we will expand on the results of Andersen by proving that when $M$ has a good $(p,r)$-filtration then $\St_r\otimes M$ has a good filtration, 
provided a suitable inequality holds between $p$, $r$, $h$ and the weights occurring in the good $(p,r)$-filtration of $M$. As a special case, we recover the results of Andersen, though our method of proof is different. 
Our method of proof involves the use of Donkin's cohomological criterion for the existence of a good filtration, and a careful analysis of the vanishing of extension groups with  suitable conditions on weights of the form 
$w.0 + p\beta$ with $w\in W$ and $\beta\in \mathbb{Z}R$. 

In order to prove the ``if'' direction of  Conjecture~\ref{donkinconj}, it is clearly enough to prove that $\St_r\otimes \nabla^{(p,r)}(\lambda)$ has a good filtration for any $\lambda\in X_+$. However, due to a result of Andersen 
(included as \ref{reductiontosimple}), it turns out that the ``if'' direction is equivalent to $\St_r\otimes L(\lambda)$ having a good filtration for any $\lambda\in X_r$. The inequality we obtain allows us to prove that $\St_r\otimes L(\lambda)$ has a good filtration 
with smaller restrictions on $p$ provided that the weight $\lambda$ is also suitably smaller. This still leaves weights $\lambda\in X_+$ for which we do not know whether $\St_r\otimes \nabla^{(p,r)}(\lambda)$ has a good filtration when $p$ is small. 
However, if $\lambda = \lambda_0 + p^r\lambda_1$ with $\lambda_0\in X_r$ and if $\lambda_1$ is large enough compared to $\lambda_0$ (made precise in \ref{storlambda1}), then we can still show that $\St_r\otimes \nabla^{(p,r)}(\lambda)$ 
has a good filtration, even if $\lambda_0$ is not small enough to satisfy the inequality we get with respect to $p$, $r$ and $h$.

A natural question is for which $\lambda\in X_+$ does $\St_r\otimes L(\lambda)$ have a good filtration? When $p\geq 2h-2$ and if $\langle\lambda,\alpha_0^{\vee}\rangle \leq (p^r-1)(h-1)$ (where $\alpha_0$ is the highest short root of $R$) we 
have that $L(\lambda)\simeq \nabla^{(p,r)}(\lambda)$ (\ref{nablapsimple}) so in these cases it does hold. However, we also prove that this is close to being the best bound of this type possible. Namely, we show that if $p = 2h - 5$ and $R$ is of type $A$, 
then there is a $\lambda$ with $\langle\lambda,\alpha_0^{\vee}\rangle \leq (p - 1)(h-1)$ and such that $\St_1\otimes L(\lambda)$ does not have a good filtration. Furthermore, we demonstrate that our results are strong enough to prove the ``if'' direction of the $(p,r)$-filtration conjecture for root system of type $A_{2}$, $A_{3}$, and $B_{2}$ over fields of 
arbitrary characteristic, as well as for the root system of type $G_2$ as long as $p\neq 7$. The methods used to analyze the root systems of small rank, together with the ``negative'' examples mentioned above, provide a deeper insight into why the condition on $\lambda$ in order for $\St_r \otimes L(\lambda)$ to have a good filtration should be $\lambda\in X_r$ rather than a bound on $\langle\lambda,\alpha_0^{\vee}\rangle$.

In the final section of the paper, we recast Donkin's $(p,r)$-Filtration Conjecture via tilting modules. This allows us to establish a cohomological criterion (analogous to the one for good filtrations) for $\text{St}_{r}\otimes M$ to admit  
a good filtration (see Theorem~\ref{T:cohom-criteria}). This cohomological criterion is independent of the characteristic of the field, and has many similarities to those developed in \cite{andersen01}, though we focus on a different set of modules, which allows us to get a vanishing criterion only involving $\Ext^1$-groups, rather than having to involve all higher $\Ext$-groups. As a corollary of this result we show that if Donkin's Tilting Module Conjecture holds then the ``if'' direction of Donkin's $(p,r)$-Filtration Conjecture holds. Since the tilting module conjecture is valid when $p\geq 2h-2$, this yields a second proof of the ``if'' direction of the $(p,r)$-filtration conjecture. We note that if both directions of the $(p,r)$-conjecture are true then our 
cohomological criteria is equivalent to a module $M$-admitting a good $(p,r)$-filtration.

\subsection{Acknowledgements} The first author acknowledges the financial support of the Danish National Research Foundation and the hospitality of the Department of Mathematics at the University of Georgia during Spring 2013. 
The work in this paper was initiated during this period of time. 

\section{Preliminaries} 

\subsection{Notation}
\noindent 
Throughout this paper, the following basic notation will be used. 
\begin{itemize}

\item $k$: an algebraically closed field of characteristic $p> 0$.

\item $G$: a simple, simply connected algebraic group scheme over $k$, defined over $\mathbb{F}_p$ (the assumption of $G$ being simple is for convenience and the results easily generalize to $G$ reductive).

\item $T$: a maximal split torus in $G$. 

\item $R$: the corresponding (irreducible) root system associated to $(G,T)$. When referring to short and long roots, when a root system has roots of only one length, all roots shall be considered as both short and long.

\item $R^{\pm}$: the positive (respectively, negative) roots.  

\item $S = \{\alpha_1,\alpha_2,\dots,\alpha_n\}$: an ordering of the simple roots.

\item  $B$: a Borel subgroup containing $T$ corresponding to the negative roots. 

\item $\mathbb E$: the Euclidean space spanned by $\Phi$ with inner product $\langle\,,\,\rangle$ normalized 
so that $\langle\alpha,\alpha\rangle=2$ for $\alpha \in \Phi$ any short root.

\item $X=X(T)=\mathbb Z \omega_1\oplus\cdots\oplus{\mathbb Z}\omega_n$: the weight lattice, where the 
fundamental dominant weights $\omega_i\in{\mathbb E}$ are defined by $\langle\omega_i,\alpha_j^\vee\rangle=\delta_{ij}$, $1\leq i,j\leq n$.

\item $X_{+}=X(T)_{+}={\mathbb N}\omega_1+\cdots+{\mathbb N}\omega_n$: the dominant weights.

\item $X_{r}=X_{r}(T)=\{\lambda\in X(T)_+: 0\leq \langle\lambda,\alpha^\vee\rangle<p^{r},\,\,\forall \alpha\in S \}$: the set of $p^{r}$-restricted dominant weights.

\item $F:G\rightarrow G$: the Frobenius morphism. 

\item $G_r=\text{ker }F^{r}$: the $r$th Frobenius kernel of $G$. 

\item $W$: the Weyl group of $R$. 

\item $w_{0}$: the long element of the Weyl group.

\item $\alpha^\vee=2\alpha/\langle\alpha,\alpha\rangle$: the coroot of $\alpha\in R$.

\item $\rho$: the Weyl weight defined by $\rho=\frac{1}{2}\sum_{\alpha\in\Phi^+}\alpha$.

\item $h$: the Coxeter number of $\Phi$, given by $h=\langle\rho,\alpha_0^{\vee} \rangle+1$.

\item $\alpha_0$: the maximal short root.

\item $\leq$ on $X(T)$: a partial ordering of weights, for $\lambda, \mu \in X(T)$, $\mu\leq \lambda$ if and only if $\lambda-\mu$ is a linear combination 
of simple roots with non-negative integral coefficients. 

\item $M^{(r)}$:  the module obtained by composing the underlying representation for 
a rational $G$-module $M$ with $F^{r}$.

\item $\nabla(\lambda) := \operatorname{Ind}_B^G\lambda$, $\lambda\in X(T)_{+}$: the induced module whose character is provided by Weyl's character formula.  

\item $\Delta(\lambda)$, $\lambda\in X(T)_{+}$: the Weyl module of highest weight $\lambda$. Thus, $\Delta(\lambda)\cong \nabla(-w_{0}(\lambda))^*$.

\item $L(\lambda)$: the simple finite dimensional $G$-module with highest weight $\lambda\in X(T)_{+}$. 

\item $\nabla^{(p,r)}(\lambda) = L(\lambda_0)\otimes \nabla(\lambda_1)^{(r)}$: where $\lambda = \lambda_0 + p^r\lambda_1$ with $\lambda_0\in X_r$ and $\lambda_1\in X_+$.

\item $\nabla^p(\lambda) = \nabla^{(p,1)}(\lambda)$.

\end{itemize} 

\subsection{Weights of the form $w\cdot 0$}

Throughout this section we will make use of the observation that if $\lambda\leq \mu$ with $\lambda, \mu\in X$ then $\langle\lambda,\alpha_0^{\vee}\rangle \leq \langle\mu,\alpha_0^{\vee}\rangle$.
This follows because $\mu = \lambda + \beta$ where $\beta$ is a non-negative linear combination of the simple roots, and the inner product of any simple root with $\alpha_{0}^{\vee}$ is greater than
or equal to zero. The ``dot'' action is given by $w\cdot \lambda = w(\lambda + \rho)-\rho$ for $w\in W$ and $\lambda\in X$. Define 
$R_w^{< 0} = \{\alpha\in -R^+\mid w^{-1}(\alpha) > 0\}$. We begin by stating a well-known fact \cite[Lemma 7.3.6]{goodmanwallach} relating $w\cdot 0$ and $R_w^{< 0}$. Note that the second part of the lemma 
follows since $w_0\cdot 0 = w_0(\rho) - \rho = -2\rho = \sum_{\alpha\in -R^+}\alpha$. 

\begin{lemma}\label{negativ}
Let $w\in W$ and set $R_w^{< 0} = \{\alpha\in -R^+\mid w^{-1}(\alpha) > 0\}$. Then $$w\cdot 0 = \sum_{\alpha\in R_w^{<0}}\alpha$$
In particular,  $w_0\cdot 0\leq w\cdot 0\leq 0$.
\end{lemma}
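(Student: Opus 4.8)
The plan is to deduce the formula from the classical description of $\rho-w(\rho)$ as a sum over the ``inversion set'' of $w$. Starting from the definition of the dot action, $w\cdot 0 = w(\rho)-\rho$, I would write $w(\rho)=\tfrac12\sum_{\gamma\in R^+}w(\gamma)$ and partition $R^+ = A\sqcup B$, where $A=\{\gamma\in R^+ : w(\gamma)\in R^+\}$ and $B=\{\gamma\in R^+ : w(\gamma)\in -R^+\}$. The crucial point is the combinatorial fact, valid for $w\in W$, that $R^+$ is the disjoint union $w(A)\sqcup(-w(B))$: both $w(A)$ and $-w(B)$ are subsets of $R^+$ by the definitions of $A$ and $B$, they are disjoint since an identity $w(\gamma_1)=-w(\gamma_2)$ with $\gamma_1,\gamma_2\in R^+$ would force $\gamma_1=-\gamma_2$, and they have total size $|A|+|B|=|R^+|$. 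This is exactly the input encoded in \cite[Lemma 7.3.6]{goodmanwallach}, and it is the only place where the hypothesis $w\in W$ (rather than an arbitrary linear map) is used.

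Granting this, summing over $R^+$ in two ways gives $2\rho=\sum_{\gamma\in A}w(\gamma)-\sum_{\gamma\in B}w(\gamma)$ and $2w(\rho)=\sum_{\gamma\in A}w(\gamma)+\sum_{\gamma\in B}w(\gamma)$; subtracting yields $w(\rho)-\rho=\sum_{\gamma\in B}w(\gamma)$, so $w\cdot 0=\sum_{\gamma\in B}w(\gamma)$. I would then re-index by $\alpha=w(\gamma)$: as $\gamma$ runs over $B$, the element $\alpha=w(\gamma)$ runs over precisely those $\alpha\in -R^+$ with $w^{-1}(\alpha)=\gamma\in R^+$, i.e.\ over $R_w^{<0}$. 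Hence $w\cdot 0=\sum_{\alpha\in R_w^{<0}}\alpha$, as claimed.

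For the ``in particular'' assertion, recall (as already noted in the text) that $w_0\cdot 0 = w_0(\rho)-\rho = -2\rho = \sum_{\alpha\in -R^+}\alpha$. Since $R_w^{<0}\subseteq -R^+$ and every $\alpha\in -R^+$ is a non-positive integral combination of simple roots, $w\cdot 0=\sum_{\alpha\in R_w^{<0}}\alpha\leq 0$; and $w\cdot 0-w_0\cdot 0=\sum_{\alpha\in(-R^+)\setminus R_w^{<0}}(-\alpha)$ is a non-negative integral combination of simple roots, so $w_0\cdot 0\leq w\cdot 0$. I do not anticipate a genuine obstacle here; the one step requiring care is the disjoint-union identity $R^+=w(A)\sqcup(-w(B))$, but this is standard and in any case available from the cited reference.
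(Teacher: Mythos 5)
Your proof is correct: the partition $R^+=A\sqcup B$ with $R^+=w(A)\sqcup(-w(B))$, the resulting identity $w(\rho)-\rho=\sum_{\gamma\in B}w(\gamma)$, and the re-indexing onto $R_w^{<0}$ are all sound, as is the deduction of $w_0\cdot 0\leq w\cdot 0\leq 0$. The paper gives no proof of this lemma at all---it simply cites \cite[Lemma 7.3.6]{goodmanwallach} and remarks on the $w_0$ case---and your argument is exactly the standard inversion-set computation underlying that citation, so there is nothing to compare beyond noting that you have supplied the details the paper delegates to the reference.
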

We can now prove bounds on the size of the inner products of $w\cdot 0$ with coroots.

\begin{proposition}\label{innerproductalpha0} Let $w\in W$. Then 
\begin{itemize} 
\item[(a)] $\langle w\cdot 0,\alpha_0^{\vee}\rangle \geq -2(h-1)$;
\item[(b)] $\langle w\cdot 0,\alpha^{\vee}\rangle\leq h-2$ for any $\alpha\in S$. 
\end{itemize} 
\end{proposition}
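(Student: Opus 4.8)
The plan is to reduce each inequality to an estimate on the pairing of $\rho$ with a coroot, via the identity $w\cdot 0 = w(\rho)-\rho$. For part (a) I would invoke Lemma~\ref{negativ}: since $w_0\cdot 0\leq w\cdot 0$, and since the functional $\langle\,\cdot\,,\alpha_0^\vee\rangle$ is monotone for the dominance order — this is precisely the observation recorded at the beginning of the subsection, that every simple root pairs non-negatively with $\alpha_0^\vee$ — we obtain $\langle w_0\cdot 0,\alpha_0^\vee\rangle\leq\langle w\cdot 0,\alpha_0^\vee\rangle$. Since $w_0\cdot 0 = -2\rho$, the left-hand side equals $-2\langle\rho,\alpha_0^\vee\rangle = -2(h-1)$ by the definition of the Coxeter number, which is (a).

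For part (b) the monotonicity argument is unavailable, because $\langle\,\cdot\,,\alpha^\vee\rangle$ need not respect the dominance order when $\alpha$ is an arbitrary simple root, so I would argue directly. For $\alpha\in S$,
$$\langle w\cdot 0,\alpha^\vee\rangle = \langle w(\rho),\alpha^\vee\rangle - \langle\rho,\alpha^\vee\rangle = \langle\rho, w^{-1}(\alpha^\vee)\rangle - 1 = \langle\rho,\gamma^\vee\rangle - 1,$$
where $\gamma := w^{-1}(\alpha)\in R$; here I use that $W$ preserves the pairing, that $w^{-1}(\alpha^\vee) = (w^{-1}\alpha)^\vee$, and that $\langle\rho,\alpha^\vee\rangle = 1$ for $\alpha$ simple. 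It then suffices to show $\langle\rho,\gamma^\vee\rangle\leq h-1$ for every root $\gamma$. Expanding $\gamma^\vee = \sum_i n_i\alpha_i^\vee$ in the simple coroots gives $\langle\rho,\gamma^\vee\rangle = \sum_i n_i$, the height of $\gamma^\vee$ in the dual root system $R^\vee$: if $\gamma$ is negative this is $\leq 0\leq h-1$, while if $\gamma$ is positive then $\gamma^\vee\leq\alpha_0^\vee$ in $R^\vee$, so its height is at most that of $\alpha_0^\vee$, namely $\langle\rho,\alpha_0^\vee\rangle = h-1$. This proves (b).

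The only non-formal ingredient, and hence the step to pin down carefully, is the claim that $\alpha_0^\vee$ is the highest root of $R^\vee$ (equivalently, the maximal coroot). I would either cite this from a standard reference or deduce it from the facts that the highest root of an irreducible root system is its unique dominant root of maximal length, applied to $R^\vee$: the long roots of $R^\vee$ are exactly the coroots of the short roots of $R$, and the dominant short root of $R$ is $\alpha_0$. Everything else is bookkeeping with the dot action and the relation $\langle\omega_i,\alpha_j^\vee\rangle = \delta_{ij}$. (One could also prove (a) by the same direct computation, bounding $\langle\rho,(w^{-1}\alpha_0)^\vee\rangle$ below by $\langle\rho,-\alpha_0^\vee\rangle = -(h-1)$, but the order-theoretic argument above is shorter.)
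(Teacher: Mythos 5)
Your proposal is correct and follows essentially the same route as the paper: part (a) via Lemma~\ref{negativ} and the monotonicity of $\langle\,\cdot\,,\alpha_0^{\vee}\rangle$ under the dominance order, and part (b) via the identity $\langle w\cdot 0,\alpha^{\vee}\rangle = \langle\rho,w^{-1}(\alpha^{\vee})\rangle - 1$ together with the fact that $\langle\rho,\beta^{\vee}\rangle$ is the height of $\beta^{\vee}$, bounded by $\langle\rho,\alpha_0^{\vee}\rangle = h-1$. Your extra care in justifying that $\alpha_0^{\vee}$ is the highest root of $R^{\vee}$ only makes explicit what the paper leaves implicit.
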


\begin{proof} (a) By Lemma \ref{negativ} we have $w\cdot 0\geq w_0\cdot 0$, thus  
$$\langle w\cdot 0,\alpha_0^{\vee}\rangle \geq \langle w_0\cdot 0,\alpha_0^{\vee}\rangle = \langle -2\rho,\alpha_0^{\vee}\rangle = -2(h-1).$$ 

(b) Observe that $\langle w\cdot 0,\alpha^{\vee}\rangle = \langle w(\rho) - \rho,\alpha^{\vee}\rangle = \langle w(\rho),\alpha^{\vee}\rangle - \langle\rho,\alpha^{\vee}\rangle = \langle\rho,w^{-1}(\alpha^{\vee})\rangle - 1$. 
But, $\langle\rho,w^{-1}(\alpha^{\vee})\rangle$ is at most $\langle\rho,\alpha_0^{\vee}\rangle$ because for any root $\beta$ we have that $\langle\rho,\beta^{\vee}\rangle$ is the height of $\beta^{\vee}$. 
Part (b) now follows because $\langle\rho,\alpha_0^{\vee}\rangle = h-1$. 
\end{proof}

\subsection{Dominant weights in the root lattice} We summarize the results on dominant weights which will be used in the subsequent sections in the following proposition. 

\begin{proposition}\label{summary} Let $\lambda\in X_{+}$ and assume that $p\geq h-1$.  
\begin{itemize} 
\item[(a)] If $\lambda = w\cdot 0 + p\beta$ for some $w\in W$ and $\beta\in\mathbb{Z}R$, 
then $\langle\lambda,\alpha_0^{\vee}\rangle \geq 2(p - h + 1)$.
\item[(b)] Suppose that $\Ext^{n}_G(k,L(\lambda))\neq 0$ for some $n\geq 0$.
Then $\langle\lambda,\alpha_0^{\vee}\rangle \geq 2(p - h + 1)$.
\end{itemize} 
\end{proposition}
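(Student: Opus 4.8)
The plan is to prove (a) first by a direct weight computation, then derive (b) from (a) by a standard cohomological linkage argument.

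\smallskip

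\textbf{Proof of (a).}
Suppose $\lambda = w\cdot 0 + p\beta$ with $w\in W$ and $\beta\in\mathbb{Z}R$. First I would pair both sides with $\alpha_0^\vee$. By Proposition~\ref{innerproductalpha0}(a) we have $\langle w\cdot 0,\alpha_0^\vee\rangle \geq -2(h-1)$, so
$$\langle\lambda,\alpha_0^\vee\rangle = \langle w\cdot 0,\alpha_0^\vee\rangle + p\langle\beta,\alpha_0^\vee\rangle \geq -2(h-1) + p\langle\beta,\alpha_0^\vee\rangle.$$
The key point is that $\langle\beta,\alpha_0^\vee\rangle \geq 2$. This should follow from the hypothesis $\lambda\in X_+$: since $\lambda$ is dominant and lies in $\mathbb{Z}R$, it is a nonzero dominant weight in the root lattice (nonzero because $\langle\lambda,\alpha_0^\vee\rangle \geq 2(p-h+1) > 0$ is what we are proving, so I should instead argue directly that $\langle\beta,\alpha_0^\vee\rangle\ge 2$ from dominance). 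Here I would use the standard fact that a nonzero dominant weight $\mu$ in the root lattice satisfies $\langle\mu,\alpha_0^\vee\rangle\geq 2$ — one sees $\langle\beta,\alpha_0^\vee\rangle$ is an integer (as $\beta$ is in the root lattice and $\alpha_0^\vee$ pairs integrally with it), and using Proposition~\ref{innerproductalpha0}(b) applied to $w\cdot 0$ together with dominance of $\lambda$, each $\langle\beta,\alpha^\vee\rangle = \tfrac1p(\langle\lambda,\alpha^\vee\rangle - \langle w\cdot 0,\alpha^\vee\rangle)$; since $\langle\lambda,\alpha^\vee\rangle\ge 0$ and $\langle w\cdot 0,\alpha^\vee\rangle\le h-2 < p$ this forces $\langle\beta,\alpha^\vee\rangle\ge 0$ for all simple $\alpha$, i.e. $\beta$ is dominant. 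A dominant element of the root lattice is either $0$ or has $\langle\beta,\alpha_0^\vee\rangle \geq 2$ (the minimal such weights are the highest roots of the relevant components, with $\langle\alpha_0,\alpha_0^\vee\rangle=2$); the case $\beta=0$ gives $\lambda = w\cdot 0$ which is dominant only if $\lambda=0$, and then the inequality is vacuous or one takes $\beta\ne 0$. Plugging $\langle\beta,\alpha_0^\vee\rangle\ge 2$ in gives $\langle\lambda,\alpha_0^\vee\rangle \geq -2(h-1) + 2p = 2(p-h+1)$, as desired.

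\smallskip

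\textbf{Proof of (b).}
Assume $\Ext^n_G(k,L(\lambda))\neq 0$ for some $n\ge 0$. If $n=0$ then $L(\lambda)$ has $k$ as a submodule, forcing $\lambda=0$, and the bound holds trivially under $p\ge h-1$ (indeed $2(p-h+1)\le 0$ is... not quite; but $\lambda=0$ means we need $0\ge 2(p-h+1)$, which fails for $p>h-1$ — so I must handle this: when $p > h-1$ the group $\Ext^0_G(k,L(0))=k\ne0$ would contradict the claim, hence the intended reading is $n\geq 1$, or $\lambda\ne 0$; I would phrase it as: if $\lambda\ne 0$ then by the linkage principle $\lambda$ lies in the $W_p$-orbit (under the dot action) of $0$, so $\lambda = w\cdot 0 + p\gamma$ for some $w\in W$ and $\gamma\in\mathbb{Z}R$, and part (a) applies). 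So the main step is: nonvanishing of $\Ext^*_G(k,L(\lambda))$ together with the linkage principle (\cite[II.6.17]{jantzen} or equivalent) implies $\lambda\in W_p\cdot 0$, which is exactly the form $w\cdot 0 + p\beta$. Then (a) finishes it.

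\smallskip

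\textbf{Main obstacle.}
The routine parts are the linkage argument and the pairing computations. The one genuinely careful point is establishing $\langle\beta,\alpha_0^\vee\rangle\ge 2$ — i.e. that $\beta$ is forced to be a \emph{nonzero dominant} element of the root lattice and that the minimal such value is $2$ — and cleanly disposing of the degenerate case $\lambda=0$ relative to the stated inequality when $p>h-1$. I expect the paper resolves this by either implicitly assuming $\lambda\ne 0$ (equivalently $n\ge 1$ in (b)) or by noting the inequality is only asserted to be useful when its right-hand side is positive.
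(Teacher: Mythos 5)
Your proof is correct and follows essentially the same route as the paper's: dominance of $\lambda$ together with $p\geq h-1$ forces $\beta$ to be dominant, the root-lattice/minuscule-weight argument gives $\langle\beta,\alpha_0^{\vee}\rangle\geq 2$, and (b) reduces to (a) via the linkage principle. Your flag on the degenerate case $\lambda=0$ (where the stated inequality genuinely fails for $p>h-1$, e.g.\ $n=0$ in (b)) is a fair catch that the paper's proof glosses over by simply asserting $\beta\neq 0$; in all of the paper's applications $\lambda\neq 0$ is guaranteed, so nothing downstream is affected.
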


\begin{proof}
(a) By Proposition \ref{innerproductalpha0}(b), if $\alpha\in S$, we have $\langle w\cdot 0,\alpha^{\vee}\rangle \leq h-2$. Since $\lambda$ is dominant 
$$0\leq \langle w\cdot 0+p\beta, \alpha^{\vee} \rangle \leq (h-1)+p\langle \beta,\alpha^{\vee} \rangle.$$ 
Now by assumption $p\geq h-1$ which forces $\langle \beta,\alpha^{\vee} \rangle\geq 0$, so $\beta$ must be dominant. 

Next we observe that $\langle\beta,\alpha_0^{\vee}\rangle \geq 2$ because $\beta\neq 0$ and if $\langle \beta,\alpha_{0}^{\vee} \rangle =1$ then $\beta$ must be a 
minuscule weight which can be viewed as a non-zero class in $X/{\mathbb Z}R$ (see \cite[Lemma II.12.10]{rags}).  This would contradict the fact that $\beta$ is in the root lattice. Combining this fact 
with Proposition \ref{innerproductalpha0}(a), we see that $\langle\lambda,\alpha_0^{\vee}\rangle = \langle w\cdot 0,\alpha_0^{\vee}\rangle + p\langle\beta,\alpha_0^{\vee}\rangle \geq -2(h-1) + 2p = 2(p - h + 1)$ as claimed.

(b) The linkage principle (\cite[Corollary II.6.17]{rags}) implies that $\lambda = w\cdot 0 + p\beta$ for some $w\in W$ and some $\beta\in \mathbb{Z}R$, so the result follows directly from part (a).  
\end{proof}

Note that when $p\geq h$ the above cannot be improved. This is because, for $\lambda = (p-h +1)\alpha_0$, there is a short exact sequence $0\to L(\lambda)\to \nabla(\lambda)\to L(0)\to 0$, as can be seen by applying the Jantzen sum formula (\cite[Proposition 8.19]{rags}).

\section{Filtrations} 

\subsection{} Let $M$ be a rational $G$-module. In this paper a {\em $G$-filtration} for $M$ is an increasing sequence of $G$-submodules of $M$: $M_{0}\subseteq M_{1} \subseteq \dots \subseteq M$ such that $\cup_{i}M_{i}=M$. 
We now present the definition of good and {good $(p,r)$-filtration}. 

\begin{definition} Let $M$ be a $G$-module 
\begin{itemize}
\item[(a)] $M$ has a {\em good filtration} if and only if it has a $G$-filtration such that for each $i$, $M_{i+1}/M_{i}\cong \nabla(\lambda_{i})$ where $\lambda_{i}\in X_+$. 
\item[(b)] $M$ has a {\em good $(p,r)$-filtration} if and only if it has a $G$-filtration such that for each $i$, $M_{i+1}/M_{i}\cong \nabla^{(p,r)}(\lambda_{i})$ where $\lambda_{i}\in X_+$.
\item[(c)] $M$ has a {\em good $p$-filtration} if and only if it has a good $(p,1)$-filtration.
\end{itemize}
\end{definition} 

\subsection{Good Filtrations} The following well-known result will be the main tool used to prove the existence of good filtrations.

\begin{theorem}[{\cite[Corollary 1.3]{donkin81}},{\cite[Proposition II.4.16]{rags}}]\label{cohcrit} Let $M$ be a $G$-module. The following are equivalent 
\begin{itemize} 
\item[(a)] $M$ has a good filtration. 
\item[(b)] $\Ext_G^1(\Delta(\mu),M) = 0$ for all $\mu\in X_+$.
\item[(c)] $\Ext_G^n(\Delta(\mu),M) = 0$ for all $\mu\in X_+$, $n\geq 1$.
\end{itemize}
\end{theorem}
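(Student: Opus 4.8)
The plan is to prove the cycle $(a)\Rightarrow(c)\Rightarrow(b)\Rightarrow(a)$, with $(c)\Rightarrow(b)$ trivial and $(b)\Rightarrow(a)$ carrying essentially all of the weight. The one technical ingredient underlying everything is the homological orthogonality between Weyl and induced modules: $\Ext^n_G(\Delta(\mu),\nabla(\la))=0$ for all $n\geq 1$ and all $\mu,\la\in X_+$ (\cite[Proposition II.4.13]{rags}). I would recall why: writing $\nabla(\la)=\operatorname{Ind}_B^G\la$, Kempf's vanishing theorem gives $R^i\operatorname{Ind}_B^G\la=0$ for $i>0$ (as $\la$ is dominant), so the Grothendieck spectral sequence for $\operatorname{Hom}_G(-,\operatorname{Ind}_B^G-)$ collapses to an isomorphism $\Ext^n_G(\Delta(\mu),\nabla(\la))\cong\Ext^n_B(\Delta(\mu),\la)$; one then filters $\Delta(\mu)$ as a $B$-module by its one-dimensional weight spaces $k_\nu$, $\nu\leq\mu$, and finishes with a standard $B$-cohomology dévissage, $\Ext^n_B(k_\nu,k_\la)=\opH^n(B,\la-\nu)=0$ for $n\geq1$ on the relevant weights.

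For $(a)\Rightarrow(c)$ I would first reduce to a \emph{finite} good filtration: a general good filtration exhibits $M$ as the directed union of the submodules of the filtration, each of which admits a finite good filtration, and $\Ext^n_G(\Delta(\mu),-)$ commutes with such colimits because $\Delta(\mu)$ is finite-dimensional. One then inducts on the length of a finite good filtration, applying the long exact $\Ext$-sequence to a short exact sequence $0\to M'\to M\to\nabla(\la)\to 0$ with $M'$ of shorter length and using the orthogonality above to kill the two outer terms in degrees $\geq 1$. The implication $(c)\Rightarrow(b)$ is immediate.

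The heart of the matter is $(b)\Rightarrow(a)$, which also yields $(b)\Rightarrow(c)$ once $(a)\Rightarrow(c)$ is known. Here I would work inside the truncated subcategory attached to a finite saturated set of dominant weights, reducing to $M$ finite-dimensional, and induct on $\dim_k M$. Assuming $M\neq0$, choose $\la\in X_+$ maximal among the weights of $M$; maximality forces the positive part of the hyperalgebra to annihilate $M_\la$, which via Frobenius reciprocity identifies $\Hom_G(M,\nabla(\la))$ with $(M_\la)^*$ and makes the resulting evaluation map $\Phi\colon M\to\nabla(\la)\otimes_k\Hom_G(M,\nabla(\la))^*\cong\nabla(\la)^{\oplus m}$ (with $m=\dim_k M_\la$) an isomorphism on $\la$-weight spaces. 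The crucial claim is that the hypothesis $\Ext^1_G(\Delta(\mu),M)=0$ forces $\Phi$ to be \emph{surjective}; granting this, put $M'=\ker\Phi$. The long exact $\Ext$-sequence for $0\to M'\to M\to\nabla(\la)^{\oplus m}\to0$, together with $\Ext^1_G(\Delta(\mu),\nabla(\la))=0$ and the surjectivity of $\Hom_G(\Delta(\mu),M)\to\Hom_G(\Delta(\mu),\nabla(\la)^{\oplus m})$ (which is where maximality of $\la$ is used again), shows $M'$ still satisfies (b). Since $\dim_k M'<\dim_k M$, induction gives $M'$ a good filtration, and stacking $m$ copies of $\nabla(\la)$ on top produces one for $M$.

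I expect the surjectivity of $\Phi$ to be the main obstacle — equivalently, the statement that under hypothesis (b) a $\nabla(\la)$-isotypic layer can genuinely be split off at a maximal weight. For a general module one has only a nonzero map into $\nabla(\la)$ (for instance the inclusion $L(\la)\hookrightarrow\nabla(\la)$ when $L(\la)\subsetneq\nabla(\la)$, and $L(\la)$ does not have a good filtration), so it is precisely the vanishing of $\Ext^1_G(\Delta(\mu),M)$ that must be exploited to promote this to a surjection; the careful argument is carried out in \cite[Corollary 1.3]{donkin81} and \cite[Proposition II.4.16]{rags}, which I would follow.
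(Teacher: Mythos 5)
First, a point of comparison: the paper does not prove Theorem \ref{cohcrit} at all; it is quoted as a known result, with the proof delegated to \cite[Corollary 1.3]{donkin81} and \cite[Proposition II.4.16]{rags}, and then used as a black box. Measured against those cited proofs, your implications (a)$\Rightarrow$(c) and (c)$\Rightarrow$(b) are fine in outline (reduction to finite good filtrations via direct limits, then the orthogonality $\Ext^n_G(\Delta(\mu),\nabla(\lambda))=0$ for $n\geq 1$). One side remark: your parenthetical justification of that orthogonality is not correct as stated, since after filtering $\Delta(\mu)|_B$ by its weight spaces the individual groups $H^n(B,\lambda-\nu)$ do \emph{not} all vanish (by Andersen's theorem $H^1(B,-p^s\alpha)\neq 0$ for $\alpha$ simple, and such differences $\lambda-\nu$ do occur); the vanishing of $\Ext^n_B(\Delta(\mu),\lambda)$ is a genuine theorem, not a termwise d\'evissage. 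Since you cite \cite[Proposition II.4.13]{rags} for the fact itself, this is minor.

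The genuine gap is in (b)$\Rightarrow$(a), which is the entire content of the theorem. Your plan — take $\lambda$ maximal among the weights of $M$, form the evaluation $\Phi\colon M\to\nabla(\lambda)^{\oplus m}$, prove $\Phi$ is surjective from the $\Ext^1$-hypothesis, pass to $\ker\Phi$ and induct — leaves the surjectivity of $\Phi$ unproved, and you explicitly defer exactly this step to the references. It is not a routine verification: if $C=\operatorname{coker}\Phi\neq 0$ and $\mu$ is a maximal weight of $C$, one gets $\Ext^1_G(\Delta(\mu),\operatorname{im}\Phi)\cong\Hom_G(\Delta(\mu),C)\neq 0$, but transporting this back to $M$ along $0\to\ker\Phi\to M\to\operatorname{im}\Phi\to 0$ produces an obstruction living in $\Ext^2_G(\Delta(\mu),\ker\Phi)$, which the hypothesis (vanishing of $\Ext^1$ only) does not control; trying instead to show first that $\ker\Phi$ inherits (b) runs into the same circle. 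The published proofs avoid this by structuring the induction differently (building the filtration from the bottom, via an enumeration of the dominant weights compatible with $\leq$ and truncation to saturated sets), so invoking them would amount to replacing your induction rather than filling in a lemma. What you do establish correctly is the easy bookkeeping around the maximal weight ($\Hom_G(M,\nabla(\lambda))\cong (M_\lambda)^*$, $\Hom_G(\Delta(\lambda),M)\cong M_\lambda$, and that $\ker\Phi$ inherits (b) \emph{once} surjectivity is granted), together with the two easier implications; the crucial implication itself is not proved.
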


For our purposes it is convenient to provide a modified version of this cohomological criterion. We note that if Donkin's conjecture holds, parts (b) and (c) of the theorem below would give 
a cohomological criterion for the existence of good $(p,r)$-filtrations. 

\begin{theorem} \label{altconditions} Let $M$ be a $G$-module. The following are equivalent 
\begin{itemize} 
\item[(a)] $\St_{r}\otimes M$ has a good filtration. 
\item[(b)] $\Ext_{G/G_r}^1(k,\Hom_{G_r}(\Delta(\mu),\St_r\otimes M))=0$ for all $\mu\in X_+$.
\item[(c)] $\Ext_{G/G_r}^n(k,\Hom_{G_r}(\Delta(\mu),\St_r\otimes M))=0$ for all $\mu\in X_+$, $n\geq 1$.
\end{itemize} 
\end{theorem}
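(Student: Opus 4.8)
\textbf{Proof proposal for Theorem~\ref{altconditions}.}

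The plan is to deduce the three-way equivalence from the corresponding equivalence in Theorem~\ref{cohcrit} applied to the module $N := \St_r\otimes M$, combined with a Lyndon--Hochschild--Serre (LHS) spectral sequence argument for the normal subgroup scheme $G_r \trianglelefteq G$. The key observation is that the Steinberg module $\St_r$ restricts to the Frobenius kernel $G_r$ as an injective (equivalently, projective) $G_r$-module; indeed $\St_r|_{G_r}$ is the injective hull of the trivial $G_r$-module, and tensoring any $G_r$-module with $\St_r$ produces an injective $G_r$-module (this is Jantzen~\cite[II.10.1, II.11.x]{rags} territory, which the paper is free to cite). Hence for every $\mu\in X_+$ the $G_r$-module $\Hom_{G_r}(\Delta(\mu)|_{G_r}, N)$ is the right object to feed into an LHS argument, and all higher $G_r$-cohomology of $\Delta(\mu)^*\otimes N$ vanishes.

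First I would rewrite $\Ext^n_G(\Delta(\mu), N) = \Ext^n_G(k, \Delta(\mu)^*\otimes N) = \opH^n(G, \Delta(\mu)^*\otimes N)$, using that $\Delta(\mu)$ is finite-dimensional so dualizing is harmless. Next, apply the LHS spectral sequence
\[
E_2^{i,j} = \opH^i\big(G/G_r,\, \opH^j(G_r, \Delta(\mu)^*\otimes N)\big) \Rightarrow \opH^{i+j}(G, \Delta(\mu)^*\otimes N).
\]
Because $N = \St_r\otimes M$ is $G_r$-injective, $\Delta(\mu)^*\otimes N$ is also $G_r$-injective, so $\opH^j(G_r, \Delta(\mu)^*\otimes N) = 0$ for all $j\geq 1$, and the spectral sequence collapses on the $j=0$ row. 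This yields, for every $n\geq 0$,
\[
\Ext^n_G(\Delta(\mu), \St_r\otimes M) \;\cong\; \opH^n\big(G/G_r,\, \opH^0(G_r, \Delta(\mu)^*\otimes \St_r\otimes M)\big) \;=\; \Ext^n_{G/G_r}\big(k,\, \Hom_{G_r}(\Delta(\mu), \St_r\otimes M)\big),
\]
where in the last step I identify $\opH^0(G_r,-)$ of $\Delta(\mu)^*\otimes N$ with $\Hom_{G_r}(\Delta(\mu), N)$ and note this is naturally a $G/G_r$-module. With this isomorphism in hand (natural in $n$), conditions (b) and (c) of the present theorem translate verbatim into conditions (b) and (c) of Theorem~\ref{cohcrit} for the module $\St_r\otimes M$, and those are equivalent to (a) of the present theorem, namely that $\St_r\otimes M$ has a good filtration. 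This closes the loop (a) $\Leftrightarrow$ (b) $\Leftrightarrow$ (c).

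The step I expect to be the main (if modest) obstacle is the clean justification that $\St_r\otimes M$ — and hence $\Delta(\mu)^*\otimes\St_r\otimes M$ — is injective as a $G_r$-module, so that the higher $G_r$-cohomology genuinely vanishes and the spectral sequence collapses; one must be slightly careful that $M$ is only assumed finite-dimensional rational, but since $\St_r|_{G_r}$ is $G_r$-injective and the tensor product of an injective $G_r$-module with any finite-dimensional module is again $G_r$-injective, this goes through. A secondary point of care is the naturality of the collapse isomorphism in the cohomological degree $n$ and in $\mu$, which is automatic from the functoriality of the LHS spectral sequence but should be noted so that the quantifier ``for all $\mu\in X_+$, $n\geq 1$'' matches on both sides. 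Everything else is formal.
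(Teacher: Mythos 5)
Your argument is correct and is essentially the paper's own proof: both hinge on $\St_r$ being $G_r$-injective, so that $\St_r\otimes M$ is $G_r$-injective, the Lyndon--Hochschild--Serre spectral sequence collapses to give $\Ext^n_G(\Delta(\mu),\St_r\otimes M)\cong \Ext^n_{G/G_r}(k,\Hom_{G_r}(\Delta(\mu),\St_r\otimes M))$ for all $n\geq 0$, and Theorem~\ref{cohcrit} finishes. The only cosmetic difference is that you first dualize $\Delta(\mu)$ to phrase things as group cohomology of $\Delta(\mu)^*\otimes\St_r\otimes M$, whereas the paper runs the Ext-form of the spectral sequence directly.
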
 

\begin{proof} Consider the Lyndon-Hochschild-Serre spectral sequence 
$$E_{2}^{i,j}=\text{Ext}^{i}_{G/G_{r}}(k,\text{Ext}^{j}_{G_{r}}(\Delta(\mu),\St_r\otimes M))\Rightarrow \text{Ext}^{i+j}_{G}(\Delta(\mu),\St_r \otimes M).$$ 
Since $\St_r$ is injective as a $G_r$-module by \cite[Proposition II.10.2]{rags}, we also have that $\St_r\otimes M$ is injective as a $G_r$-module. Therefore, this 
spectral sequence collapses and yields the isomorphism: 
\begin{equation} 
\text{Ext}^{n}_{G}(\Delta(\mu),\St_r \otimes M)\cong \Ext_{G/G_r}^n(k,\Hom_{G_r}(\Delta(\mu),\St_r\otimes M))
\end{equation} 
for all $n\geq 0$. The theorem now follows from Theorem~\ref{cohcrit}. 
\end{proof} 

In the paper we will also employ the following important property of good filtrations.

\begin{theorem}[{\cite[Theorem 1]{mathieu90}},{\cite[Proposition II.4.21]{rags}}]\label{tensorproductgoodfilt}
If $M$ and $M'$ are $G$-modules, each of which has a good filtration, then $M\otimes M'$ also has a good filtration.
\end{theorem}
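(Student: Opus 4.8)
This is Mathieu's theorem, and the plan is to reduce it to a vanishing statement about line bundles on flag varieties and then bring in the Frobenius-splitting technology that already underlies Kempf vanishing.

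\emph{Reduction.} First I would reduce to the case $M = \nabla(\lambda)$, $M' = \nabla(\mu)$ with $\lambda,\mu\in X_+$. If $M$ carries a good filtration $0 = M_0\subseteq M_1\subseteq\cdots\subseteq M$ with $M_{i+1}/M_i\cong\nabla(\lambda_i)$, then $\{M_i\otimes M'\}$ filters $M\otimes M'$ with sections $\nabla(\lambda_i)\otimes M'$, and filtering $M'$ the same way refines this to a filtration with sections $\nabla(\lambda_i)\otimes\nabla(\mu_j)$. The class of modules admitting a good filtration is closed under extensions — immediate from the long exact $\Ext$-sequence together with the criterion of Theorem~\ref{cohcrit} — so it suffices to show that $\nabla(\lambda)\otimes\nabla(\mu)$ has a good filtration.

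\emph{Geometric reformulation.} Using the tensor identity, $\nabla(\lambda)\otimes\nabla(\mu)\cong\nabla(\lambda)\otimes\ind_B^G(\mu)\cong\ind_B^G(\nabla(\lambda)|_B\otimes\mu)$, so the question becomes one about the $B$-module $\nabla(\lambda)|_B\otimes\mu$ and the functors $R^i\ind_B^G$. The two inputs needed are: (i) $\nabla(\lambda)|_B$ admits an \emph{excellent filtration}, i.e. a filtration whose sections are Demazure (relative Schubert) modules, and this property is preserved upon tensoring with the one-dimensional module $\mu$; and (ii) if a $B$-module $S$ has an excellent filtration, then $R^i\ind_B^G S = 0$ for $i > 0$ and $\ind_B^G S$ has a good filtration. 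Granting (i) and (ii), one climbs the excellent filtration $0 = N_0\subseteq\cdots\subseteq N_t = \nabla(\lambda)|_B\otimes\mu$: the vanishing $R^{>0}\ind_B^G(N_{i+1}/N_i) = 0$ yields, by induction, short exact sequences $0\to\ind_B^G N_i\to\ind_B^G N_{i+1}\to\ind_B^G(N_{i+1}/N_i)\to 0$, so $\nabla(\lambda)\otimes\nabla(\mu) = \ind_B^G N_t$ is filtered by modules each having a good filtration, hence has one itself (one may equivalently phrase the last step as an $\Ext$-vanishing check through Theorem~\ref{cohcrit}).

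\emph{The hard part.} Statements (i) and (ii) are where all the difficulty lies, and both follow from the Frobenius splitting of Schubert varieties and of their Bott--Samelson--Demazure--Hansen desingularizations, compatibly with the boundary divisors — the same input that proves Kempf vanishing. Establishing this splitting (equivalently, carrying through Mathieu's characteristic-free argument, which couples the splitting of $G/B\times G/B$ with a delicate double induction on the two weights) is the main obstacle; the cohomological bookkeeping above is routine once it is in place. For $p$ sufficiently large one can bypass the geometry: lift everything to a $\mathbb{Z}$-form and use upper semicontinuity of the dimensions of the relevant $\Ext$-groups together with semisimplicity of the characteristic-zero fibre to force the $\Ext^1$ of Theorem~\ref{cohcrit} to vanish — this is Donkin's original approach, and Mathieu's theorem is precisely the removal of the restriction on $p$. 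I would present the Frobenius-splitting route as the main proof and record the characteristic-zero reduction as the alternative in the large-$p$ range.
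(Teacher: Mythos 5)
The paper does not prove this statement at all: it is the Donkin--Mathieu tensor product theorem, imported by citation to Mathieu and to Jantzen's book, so there is no in-paper argument to compare yours against. Measured against the cited literature, your outline follows the standard route and its skeleton is correct: the reduction to $\nabla(\lambda)\otimes\nabla(\mu)$ via extension-closure of the good-filtration class (using Theorem \ref{cohcrit}) is routine, and the passage through the tensor identity together with your items (i) and (ii) is precisely the excellent-filtration strategy of Mathieu and van der Kallen.

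Be clear, though, that what you wrote is a reading plan rather than a proof: all of the mathematical content is concentrated in (i) and (ii), and these do not follow from ``the same input that proves Kempf vanishing.'' Kempf vanishing needs only a Frobenius splitting of $G/B$; the tensor product theorem needs the canonical, compatible splittings --- of the diagonal in $G/B\times G/B$, or of the Bott--Samelson desingularizations compatibly with all boundary divisors --- and, on top of that, the $B$-module theory of excellent filtrations, in particular the fact (Polo's conjecture, proved by Mathieu and van der Kallen) that tensoring an excellently filtered $B$-module by a character of the correct sign preserves excellence; with the paper's convention that $B$ corresponds to the negative roots you must also sort out the dominant/antidominant bookkeeping in (i). Finally, your closing aside misstates the alternative: a semicontinuity argument from a $\mathbb{Z}$-form gives vanishing of the relevant $\operatorname{Ext}^{1}$ only for $p$ large depending on $\lambda$ and $\mu$, so it does not prove the theorem for any fixed $p$, and Donkin's original proof was a nearly complete characteristic-free case analysis (missing only a few small-prime exceptional cases), not a large-$p$ degeneration.
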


\section{Good filtrations for $\St_{r}\otimes L(\lambda)$: bounds on $\lambda$}

\subsection{} From the proof of Theorem~\ref{altconditions}, we have 
\begin{equation} \label{ext1iso} 
\text{Ext}^{1}_{G}(\Delta(\mu),\St_r \otimes M)\cong \Ext_{G/G_r}^1(k,\Hom_{G_r}(\Delta(\mu),\St_r\otimes M))
\end{equation} 
We will first show that we can restrict our attention to a finite set of weights $\mu\in X_{+}$ in order to 
verify that this extension group is zero. 

\begin{lemma}\label{smallmu} Let $\lambda,\mu\in X_+$ and $\Ext_G^1(\Delta(\mu),\St_r\otimes L(\lambda))\neq 0$.
Then $$\langle\mu,\alpha_0^{\vee}\rangle\leq \langle\lambda,\alpha_0^{\vee}\rangle + (p^r - 1)(h-1).$$
\end{lemma}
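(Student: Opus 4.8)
The plan is to show that if $\Ext_G^1(\Delta(\mu),\St_r\otimes L(\lambda))\neq 0$ then $\mu$ cannot be too large, by comparing $\mu$ against the highest weight of $\St_r\otimes L(\lambda)$ together with a dominance/linkage argument. First I would use the isomorphism \eqref{ext1iso} from Theorem~\ref{altconditions}, rewriting $\Ext^1_{G/G_r}(k,\Hom_{G_r}(\Delta(\mu),\St_r\otimes L(\lambda)))$ so as to isolate the role of the $r$-th Frobenius twist. Since $G_r$ acts trivially on Frobenius-twisted modules and $\St_r\otimes L(\lambda)$ is $G_r$-injective, the $G_r$-homomorphisms $\Hom_{G_r}(\Delta(\mu),\St_r\otimes L(\lambda))$ form a rational $G/G_r$-module, hence a module of the form $N^{(r)}$ for a rational $G$-module $N$. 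The key point is that any weight $\nu$ of $N^{(r)}$ is of the form $p^r\nu'$, and a nonvanishing $\Ext^1_{G/G_r}(k,N^{(r)})$ forces (via the linkage principle for $G/G_r\cong G$, Proposition~\ref{summary}(b) applied after untwisting, or simply the fact that $k$ lies in the block) that $N^{(r)}$ has a composition factor whose highest weight contributes; more usefully, a nonzero $\Ext^1_{G/G_r}(k,N^{(r)})$ means $N^{(r)}\neq 0$, so $N\neq 0$.

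Next I would bound the highest weight of $N$. A weight $\nu$ occurs in $\Hom_{G_r}(\Delta(\mu),\St_r\otimes L(\lambda))$ precisely when $\Hom_{G_r}(\Delta(\mu),\St_r\otimes L(\lambda))_\nu\neq 0$; since this is a $G_r$-invariant Hom space graded by $X/p^rX$-compatible weights, such a $\nu$ arises from a $T$-weight $\tau$ of $\Delta(\mu)^*\otimes\St_r\otimes L(\lambda)$ with $\tau\equiv 0\pmod{p^r}$, i.e. $\tau=p^r\nu$. The largest weight of $\Delta(\mu)^*$ is $-w_0\mu$, which satisfies $\langle -w_0\mu,\alpha_0^\vee\rangle=\langle\mu,\alpha_0^\vee\rangle$; the highest weight of $\St_r$ is $(p^r-1)\rho$ with $\langle(p^r-1)\rho,\alpha_0^\vee\rangle=(p^r-1)(h-1)$; and the highest weight of $L(\lambda)$ is $\lambda$. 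Hence every weight $\tau$ of the triple tensor product satisfies $\langle\tau,\alpha_0^\vee\rangle\leq\langle\mu,\alpha_0^\vee\rangle+(p^r-1)(h-1)+\langle\lambda,\alpha_0^\vee\rangle$... — that inequality goes the wrong way, so the actual argument must instead pair $\Ext^1_G(\Delta(\mu),\St_r\otimes L(\lambda))\neq 0$ with the observation that $\St_r\otimes L(\lambda)$ has highest weight $(p^r-1)\rho+\lambda$, so any $\mu$ with $\Ext^1_G(\Delta(\mu),-)\neq 0$ must have $\mu\leq(p^r-1)\rho+\lambda$ in the dominance order (as $\Delta(\mu)$ must map to a nonzero composition factor of a composition factor's extension, forcing $\mu$ to be a weight of $\St_r\otimes L(\lambda)$). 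Applying the monotonicity of $\langle\,\cdot\,,\alpha_0^\vee\rangle$ under $\leq$ (recorded at the start of Section~2.2) then gives $\langle\mu,\alpha_0^\vee\rangle\leq\langle(p^r-1)\rho+\lambda,\alpha_0^\vee\rangle=(p^r-1)(h-1)+\langle\lambda,\alpha_0^\vee\rangle$, which is exactly the claimed bound.

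The main obstacle I anticipate is justifying cleanly that $\Ext^1_G(\Delta(\mu),V)\neq 0$ forces $\mu$ to be $\leq$ the highest weight of $V$: this is standard (one filters $V$ by modules with simple highest weight, uses that $\Ext^1_G(\Delta(\mu),L(\sigma))\neq 0$ implies $\mu<\sigma$ by a highest-weight/block argument, and that $\Hom_G(\Delta(\mu),L(\sigma))\neq 0$ implies $\mu=\sigma$), but I should be careful to phrase it for a possibly non-highest-weight module $V=\St_r\otimes L(\lambda)$ — the correct statement is that any $\mu$ with $\Ext^{\geq 0}_G(\Delta(\mu),V)\neq 0$ satisfies $\mu\leq\mu^+(V)$, the highest weight of $V$. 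Once that lemma is in hand, the weight count $\mu^+(\St_r\otimes L(\lambda))=(p^r-1)\rho+\lambda$ together with $\langle\rho,\alpha_0^\vee\rangle=h-1$ closes the argument in one line. I would present it in that order: reduce to $\St_r\otimes L(\lambda)$'s highest weight, invoke the dominance consequence of nonzero $\Ext^1$, then pair with $\alpha_0^\vee$ and use monotonicity.
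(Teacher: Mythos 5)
Your final argument is correct and essentially matches the paper's: both reduce the claim to showing $\mu\leq(p^r-1)\rho+\lambda$ in the dominance order and then pair with $\alpha_0^{\vee}$ using $\langle\rho,\alpha_0^{\vee}\rangle=h-1$. The only (cosmetic) difference is how the dominance bound is obtained — the paper tensors $0\to L(\lambda)\to\nabla(\lambda)\to Q\to 0$ with $\St_r$ and uses that $\St_r\otimes\nabla(\lambda)$ has a good filtration to force $\Hom_G(\Delta(\mu),\St_r\otimes Q)\neq 0$, whereas you d\'evissage over composition factors of $\St_r\otimes L(\lambda)$ via the standard fact that $\Ext^1_G(\Delta(\mu),L(\sigma))\neq 0$ implies $\mu<\sigma$, which is itself proved by the same trick.
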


\begin{proof}
Consider the short exact sequence $0\to L(\lambda)\to \nabla(\lambda)\to Q\to 0$. 
One can tensor this sequence with $\St_r$ and apply $\Hom_G(\Delta(\mu),-)$ to obtain the long exact sequence 
$$\cdots\to\Hom_G(\Delta(\mu),\St_r\otimes Q)\to\Ext_G^1(\Delta(\mu),\St_r\otimes L(\lambda))\to\Ext_G^1(\Delta(\mu),\St_r\otimes\nabla(\lambda))\to\cdots$$
Since $\St_r\otimes\nabla(\lambda)$ has a good filtration by Theorem \ref{tensorproductgoodfilt}, it follows from Theorem \ref{cohcrit} that $\Ext_G^1(\Delta(\mu),\St_r\otimes\nabla(\lambda)) = 0$. 
From our hypothesis, $\Ext_G^1(\Delta(\mu),\St_r\otimes L(\lambda))\neq 0$ which implies that $\Hom_G(\Delta(\mu),\St_r\otimes Q)\neq 0$. 

The head of $\Delta(\mu)$ is $L(\mu)$, so $\mu$ must then be a weight of $\St_r\otimes Q$ and also a weight of $\St_r\otimes \nabla(\lambda)$.
In particular, $\mu\leq (p^r - 1)\rho + \lambda$, and 
$$\langle\mu,\alpha_0^{\vee}\rangle \leq \langle(p^r - 1)\rho + \lambda,\alpha_0^{\vee}\rangle = (p^r - 1)(h-1) + \langle\lambda,\alpha_0^{\vee}\rangle.$$ 
\end{proof}

\subsection{} Using the result in the preceding section we can then obtain another bound for $\mu$ which is needed in order to get $\Ext_G^1(\Delta(\mu),\St_r\otimes L(\lambda))\neq 0$, this time requiring $\langle\mu,\alpha_0^{\vee}\rangle$ to be large enough compared to $\lambda$, $p$ and $r$. 

\begin{proposition}\label{largemu} Let $p\geq h-1$ and $\Ext_G^1(\Delta(\mu),\St_r\otimes L(\lambda))\neq 0$ for some $\lambda,\mu\in X_+$. Then 
$$\langle\mu,\alpha_0^{\vee}\rangle \geq (p^r - 1)(h-1) + 2p^r(p - h + 1) - \langle\lambda,\alpha_0^{\vee}\rangle.$$
\end{proposition}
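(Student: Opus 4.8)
The plan is to exploit the isomorphism \eqref{ext1iso} together with the Lyndon--Hochschild--Serre machinery to translate the nonvanishing of $\Ext^1_G(\Delta(\mu),\St_r\otimes L(\lambda))$ into a statement about $G/G_r$-cohomology of a twisted module, and then feed this into Proposition~\ref{summary}(b). The key point is that $\St_r\otimes L(\lambda)$ restricted to $G_r$ is injective, so $\Hom_{G_r}(\Delta(\mu),\St_r\otimes L(\lambda))$ is a $G/G_r$-module that carries all the cohomological information. By \eqref{ext1iso}, the hypothesis gives $\Ext^1_{G/G_r}(k,\Hom_{G_r}(\Delta(\mu),\St_r\otimes L(\lambda)))\neq 0$, hence some composition factor $L(\nu)^{(r)}$ of the $G/G_r$-module $\Hom_{G_r}(\Delta(\mu),\St_r\otimes L(\lambda))$ has $\Ext^1_{G/G_r}(k,L(\nu)^{(r)})\neq 0$; equivalently, after untwisting, $\Ext^1_G(k,L(\nu))\neq 0$ (using that cohomology of $G/G_r\cong G$ via Frobenius agrees with $G$-cohomology). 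Proposition~\ref{summary}(b) then forces $\langle\nu,\alpha_0^\vee\rangle\geq 2(p-h+1)$.

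The second ingredient is to relate the highest weight $\mu$ to the weight $\nu$ appearing above. Here I would use the standard description of $\Hom_{G_r}(\Delta(\mu),\St_r\otimes L(\lambda))$: since $\St_r\otimes L(\lambda)$ is $G_r$-injective and the $G_r$-head/socle of $\Delta(\mu)$ involves $L(\mu_0)$ where $\mu=\mu_0+p^r\mu_1$, the $G/G_r$-composition factors $L(\nu)^{(r)}$ that can occur must have $\nu$ (as a weight contribution) matching up with $\mu_1$ via the decomposition of $\Delta(\mu)$ as a $G_r$-module. More precisely, I expect to argue that if $L(\nu)^{(r)}$ is a composition factor of $\Hom_{G_r}(\Delta(\mu),\St_r\otimes L(\lambda))$ then $p^r\nu$ is at least $\mu - (\text{something bounded})$, so that $\langle\mu,\alpha_0^\vee\rangle \leq p^r\langle\nu,\alpha_0^\vee\rangle + (\text{bounded term})$. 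Combining with $\langle\nu,\alpha_0^\vee\rangle\geq 2(p-h+1)$ is the wrong direction, however --- so I will instead have to track the lowest weight or use Lemma~\ref{smallmu} in tandem. The cleaner route: by Frobenius reciprocity / the tensor identity, $\Hom_{G_r}(\Delta(\mu),\St_r\otimes L(\lambda))$ has a good filtration as a $G/G_r$-module (this is where injectivity and the structure of $\St_r$ enter, cf. Andersen's arguments), and the $\nabla$-factors $\nabla(\nu)^{(r)}$ appearing are constrained: the top one has highest weight related to $\mu_1$, and the relevant $\nu$ with $\Ext^1_G(k,L(\nu))\ne0$ sits below. Writing $\mu = \mu_0 + p^r\mu_1$, one gets $\langle \mu_1,\alpha_0^\vee\rangle$ controlled from below by $\langle\nu,\alpha_0^\vee\rangle - (h-1)$-type corrections, and $\langle\mu_0,\alpha_0^\vee\rangle \le (p^r-1)(h-1)$; assembling these with $\langle\nu,\alpha_0^\vee\rangle \ge 2(p-h+1)$ and the bound from Lemma~\ref{smallmu} on $\langle\mu,\alpha_0^\vee\rangle$ should yield exactly $\langle\mu,\alpha_0^\vee\rangle \ge (p^r-1)(h-1) + 2p^r(p-h+1) - \langle\lambda,\alpha_0^\vee\rangle$.

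The arithmetic to carry out, once the structural facts are in place, is: from the short exact sequence $0\to L(\lambda)\to\nabla(\lambda)\to Q\to 0$ tensored with $\St_r$, together with $\St_r\otimes\nabla(\lambda)$ having a good filtration, one knows $\Ext^1_G(\Delta(\mu),\St_r\otimes L(\lambda))\cong \Ext^2_G(\Delta(\mu),\St_r\otimes Q)$ is supported on the ``$G/G_r$-layer'', and an induction on the good $(p,r)$-filtration of $Q$ reduces to a single $\nabla^{(p,r)}$-factor $L(\gamma_0)\otimes\nabla(\gamma_1)^{(r)}$ with $\gamma=\gamma_0+p^r\gamma_1 \le (p^r-1)\rho+\lambda - (\text{something})$; then $\Hom_{G_r}(\Delta(\mu),\St_r\otimes L(\gamma_0))$ is nonzero only when $\mu_0$ and $\gamma_0$ are linked appropriately in $X_r$, pinning the twisted part, and $\Ext^1_{G}(k, \text{twist})\ne0$ invokes Proposition~\ref{summary}(b). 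I expect \textbf{the main obstacle} to be the precise bookkeeping of highest weights through the identification $\Ext^n_G(\Delta(\mu),\St_r\otimes M)\cong\Ext^n_{G/G_r}(k,\Hom_{G_r}(\Delta(\mu),\St_r\otimes M))$: one must show the relevant $G/G_r$-composition factor $L(\nu)^{(r)}$ contributing to $\Ext^1$ has $p^r\nu$ tied to $\mu$ with the \emph{exact} additive constant $(p^r-1)(h-1)$ rather than a cruder bound, and this requires carefully using both that $\mu$ is a weight of $\St_r\otimes\nabla(\lambda)$ (Lemma~\ref{smallmu}) and that the $G_r$-socle structure of $\Delta(\mu)$ forces $\mu - p^r\nu$ to lie in a controlled region. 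Getting the two directions to meet precisely, rather than with slack, is the delicate part.
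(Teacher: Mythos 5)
Your first half is on the right track: the isomorphism \eqref{ext1iso} plus the injectivity of $\St_r\otimes L(\lambda)$ over $G_r$ does reduce the problem to finding a composition factor of the $G/G_r$-module $\Hom_{G_r}(\Delta(\mu),\St_r\otimes L(\lambda))$ with nonvanishing $\Ext^1_{G}(k,-)$ after untwisting, and Proposition~\ref{summary}(b) is indeed the right tool at that point. But you explicitly leave open the ``main obstacle'' of tying $\mu$ to the twisted composition factor with the exact constant $(p^r-1)(h-1)$, and none of the routes you sketch (tracking $\mu_1$ through the $G_r$-socle of $\Delta(\mu)$, inducting on a filtration of $Q$) closes this. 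The missing idea is a dualization: using $\St_r\cong\St_r^*$ and $\Delta(\mu)^*\cong\nabla(-w_0(\mu))$, one has
$$\Hom_{G_r}(\Delta(\mu),\St_r\otimes L(\lambda))\cong\Hom_{G_r}(\St_r,\nabla(-w_0(\mu))\otimes L(\lambda)),$$
so (by projectivity of $\St_r$ over $G_r$) the relevant composition factor is an $L(\sigma)$ of $\nabla(\nu)\otimes L(\lambda)$, $\nu=-w_0(\mu)$, with $\Hom_{G_r}(\St_r,L(\sigma))\neq 0$. By the Steinberg tensor product theorem and simplicity of $\St_r$ over $G_r$, this forces $\sigma=(p^r-1)\rho+p^r\sigma_1$ with $\Hom_{G_r}(\St_r,L(\sigma))\cong L(\sigma_1)^{(r)}$ and $\Ext^1_G(k,L(\sigma_1))\neq 0$. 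This is exactly where the additive constant you were worried about comes from: $\langle\sigma,\alpha_0^\vee\rangle=(p^r-1)(h-1)+p^r\langle\sigma_1,\alpha_0^\vee\rangle\geq(p^r-1)(h-1)+2p^r(p-h+1)$, and then $\sigma\leq\nu+\lambda$ (as $L(\sigma)$ is a composition factor of $\nabla(\nu)\otimes L(\lambda)$) together with $\langle\nu,\alpha_0^\vee\rangle=\langle\mu,\alpha_0^\vee\rangle$ gives the claimed inequality with no slack. Note that Lemma~\ref{smallmu} is not needed here at all.

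Two smaller inaccuracies in your sketch: from the long exact sequence attached to $0\to L(\lambda)\to\nabla(\lambda)\to Q\to 0$ one gets that $\Ext^1_G(\Delta(\mu),\St_r\otimes L(\lambda))$ is a \emph{quotient of} $\Hom_G(\Delta(\mu),\St_r\otimes Q)$, not isomorphic to $\Ext^2_G(\Delta(\mu),\St_r\otimes Q)$ (that shift only applies in degrees $\geq 2$); and $Q$ is not known to admit a good $(p,r)$-filtration, so the proposed induction over such a filtration of $Q$ is not available.
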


\begin{proof} From (\ref{ext1iso}) and using the fact that $\St_r\cong \St_r^{*}$ and $\Delta(\mu)^{*} \cong \nabla(-w_0(\mu))$ we have 
\begin{eqnarray*} 
\Ext_G^1(\Delta(\mu),\St_r\otimes L(\lambda))&\cong& \Ext_{G/G_r}^1(k,\Hom_{G_r}(\Delta(\mu),\St_r\otimes L(\lambda)))\\
&\cong& \Ext_{G/G_r}^1(k,\Hom_{G_r}(\St_r,\nabla(-w_0(\mu))\otimes L(\lambda)))
\end{eqnarray*}
so one can assume the last Ext-group is not $0$. Set $\nu = -w_0(\mu)$.

If we take a composition series for $\nabla(\nu)\otimes L(\lambda)$ (as a $G$-module), this gives us a filtration of $\Hom_{G_r}(\St_r,\nabla(\nu)\otimes L(\lambda))$ since $\St_r$ is projective as a $G_r$-module. 
Therefore, since we assume that $\Ext_{G/G_r}^1(k,\Hom(\St_r,\nabla(\nu)\otimes L(\lambda)))\neq 0$, there must be some $\sigma\in X_+$ such that $L(\sigma)$ is a composition factor of $\nabla(\nu)\otimes L(\lambda)$ and such that $\Ext_{G/G_r}^1(k,\Hom_{G_r}(\St_r,L(\sigma)))\neq 0$.

In particular, $\Hom_{G_r}(\St_r,L(\sigma))\neq 0$. By Steinberg's Tensor Product Theorem (STPT) \cite[Corollary II.3.17]{rags}, $L(\sigma)\cong L(\sigma_0)\otimes L(\sigma_1)^{(r)}$ 
where $\sigma = \sigma_0+p^r\sigma_1$ with $\sigma_0\in X_r$ and $\sigma_1\in X_+$. Consequently, 
$$0\neq \Hom_{G_r}(\St_r,L(\sigma)) \cong \Hom_{G_r}(\St_r,L(\sigma_0)\otimes L(\sigma_1)^{(r)})\cong \Hom_{G_r}(\St_r,L(\sigma_0))\otimes L(\sigma_1)^{( r )}.$$
Since $\St_r$ is simple as a $G_r$-module, $\sigma_0 = (p^r - 1)\rho$, and $\Hom_{G_r}(\St_r,L(\sigma))\cong L(\sigma_1)^{(r)}$. One now has 
$$0\neq \Ext_{G/G_r}^1(k,\Hom_{G_r}(\St_r,L(\sigma)))\cong \Ext_G^1(k,L(\sigma_1)).$$ 

Now apply Proposition~\ref{summary}(b), which shows that $\langle\sigma_1,\alpha_0^{\vee}\rangle\geq 2(p - h + 1)$. This yields 
$$\langle\sigma,\alpha_0^{\vee}\rangle \geq (p^r - 1)(h-1) + 2p^r(p - h + 1).$$

Since $L(\sigma)$ was assumed to be a composition factor of $\nabla(\nu)\otimes L(\lambda)$ we get that $\sigma\leq \nu + \lambda$. 
Therefore, 
$$(p^r - 1)(h-1) + 2p^r(p - h + 1) \leq \langle\sigma,\alpha_0^{\vee}\rangle \leq \langle\nu+\lambda,\alpha_0^{\vee}\rangle.$$
It now follows that 
$$\langle \mu,\alpha_{0}^{\vee}\rangle= \langle 
\mu, -w_{0}(\alpha_{0}^{\vee}) \rangle =\langle\nu,\alpha_0^{\vee}\rangle \geq (p^r - 1)(h-1) + 2p^r(p - h + 1) - \langle\lambda,\alpha_0^{\vee}\rangle.
$$
\end{proof}

\subsection{} The results in the preceding sections allow us provide sufficient conditions for $\St_r\otimes L(\lambda)$ to admit a  good filtration.

\begin{theorem}\label{mumellem} Let $p\geq h-1$ and $\Ext^{1}_G(\Delta(\mu),\St_r\otimes L(\lambda))\neq 0$ for some $\lambda,\mu\in X_+$. Then 
$$(p^r - 1)(h-1) + 2p^r(p - h + 1) - \langle\lambda,\alpha_0^{\vee}\rangle \leq \langle\mu,\alpha_0^{\vee}\rangle\leq \langle\lambda,\alpha_0^{\vee}\rangle + (p^r - 1)(h-1)$$
\end{theorem}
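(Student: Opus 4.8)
The plan is essentially to combine the two bounds that have already been established in the preceding two results, since Theorem~\ref{mumellem} is exactly their conjunction. First I would invoke Lemma~\ref{smallmu}, whose hypothesis is literally the hypothesis of the present theorem (namely $\Ext^1_G(\Delta(\mu),\St_r\otimes L(\lambda))\neq 0$ with $\lambda,\mu\in X_+$); this immediately yields the upper bound
$$\langle\mu,\alpha_0^{\vee}\rangle \leq \langle\lambda,\alpha_0^{\vee}\rangle + (p^r-1)(h-1).$$
Note that this half requires no assumption on $p$, so it is available unconditionally.

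Next I would invoke Proposition~\ref{largemu}, which under the standing assumption $p\geq h-1$ (which is part of the hypothesis of the theorem) and the same nonvanishing hypothesis gives the lower bound
$$\langle\mu,\alpha_0^{\vee}\rangle \geq (p^r-1)(h-1) + 2p^r(p-h+1) - \langle\lambda,\alpha_0^{\vee}\rangle.$$
Concatenating the two displayed inequalities gives precisely the chain asserted in the statement, and the proof is complete. So really the only ``work'' is to check that the hypotheses of the two cited results match the hypotheses of Theorem~\ref{mumellem}, which they do verbatim.

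There is no genuine obstacle here: the theorem is a packaging statement whose entire content has been front-loaded into Lemma~\ref{smallmu} and Proposition~\ref{largemu}. If anything, the only point worth a sentence of commentary is that the two bounds are only compatible — i.e.\ the interval for $\langle\mu,\alpha_0^{\vee}\rangle$ is nonempty — when the lower endpoint does not exceed the upper endpoint, which rearranges to a condition forcing $\langle\lambda,\alpha_0^{\vee}\rangle$ to be large (roughly $\langle\lambda,\alpha_0^{\vee}\rangle \geq p^r(p-h+1)$); contrapositively, if $\lambda$ is small enough that this fails, then $\Ext^1_G(\Delta(\mu),\St_r\otimes L(\lambda))=0$ for all $\mu\in X_+$, and hence $\St_r\otimes L(\lambda)$ has a good filtration by Theorem~\ref{cohcrit}. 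I would expect the authors to exploit exactly this consequence in the following subsection, so it may be worth flagging in the proof or in a remark, but for the proof of Theorem~\ref{mumellem} itself nothing beyond stringing the two cited bounds together is needed.
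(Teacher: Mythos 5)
Your proposal is correct and is exactly the paper's argument: the theorem is proved by concatenating the upper bound from Lemma~\ref{smallmu} with the lower bound from Proposition~\ref{largemu}, whose hypotheses match verbatim. Your closing observation about the interval being empty when $\langle\lambda,\alpha_0^{\vee}\rangle < p^r(p-h+1)$ is precisely how the authors use the result in Theorem~\ref{lambdaalpha0}.
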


\begin{proof}
This follows directly by combining Lemma \ref{smallmu} and Proposition \ref{largemu}.
\end{proof}

And we also obtain the following theorem, removing the mention of $\mu$. We only state the result for $p\geq h$ as the conditions on $\lambda$ are never satisfied for $p = h-1$.

\begin{theorem}\label{lambdaalpha0}
Assume that $p\geq h$ and let $\lambda\in X_+$ with $\langle\lambda,\alpha_0^{\vee}\rangle < p^r(p - h + 1)$.
Then $\St_r\otimes L(\lambda)$ has a good filtration.
\end{theorem}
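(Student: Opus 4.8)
The plan is to derive Theorem~\ref{lambdaalpha0} as a direct consequence of Theorem~\ref{cohcrit} and Theorem~\ref{mumellem}. By Theorem~\ref{cohcrit}, it suffices to show that $\Ext^1_G(\Delta(\mu),\St_r\otimes L(\lambda)) = 0$ for every $\mu\in X_+$. So suppose, for contradiction, that this extension group is nonzero for some $\mu\in X_+$. Since $p\geq h$ implies $p\geq h-1$, Theorem~\ref{mumellem} applies and gives the two-sided inequality
$$(p^r - 1)(h-1) + 2p^r(p - h + 1) - \langle\lambda,\alpha_0^{\vee}\rangle \leq \langle\mu,\alpha_0^{\vee}\rangle \leq \langle\lambda,\alpha_0^{\vee}\rangle + (p^r - 1)(h-1).$$

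The next step is purely arithmetic: combining the lower and upper bounds on $\langle\mu,\alpha_0^{\vee}\rangle$ forces
$$(p^r - 1)(h-1) + 2p^r(p - h + 1) - \langle\lambda,\alpha_0^{\vee}\rangle \leq \langle\lambda,\alpha_0^{\vee}\rangle + (p^r - 1)(h-1),$$
which simplifies to $2p^r(p - h + 1) \leq 2\langle\lambda,\alpha_0^{\vee}\rangle$, i.e. $p^r(p - h + 1) \leq \langle\lambda,\alpha_0^{\vee}\rangle$. This directly contradicts the hypothesis $\langle\lambda,\alpha_0^{\vee}\rangle < p^r(p - h + 1)$. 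Hence no such $\mu$ exists, every relevant $\Ext^1$ vanishes, and $\St_r\otimes L(\lambda)$ has a good filtration by Theorem~\ref{cohcrit}.

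I do not anticipate a genuine obstacle here, since all the substantive work has already been done in Lemma~\ref{smallmu}, Proposition~\ref{largemu}, and Theorem~\ref{mumellem}; this theorem is essentially a repackaging of that window of weights into a single clean hypothesis on $\lambda$. The only point requiring a modicum of care is the degenerate case $p = h - 1$, where $p - h + 1 = 0$ and the hypothesis $\langle\lambda,\alpha_0^{\vee}\rangle < 0$ is vacuous — which is precisely why the statement is restricted to $p\geq h$, as noted before the theorem. One should also confirm that the $\mu$-free formulation is legitimate: Theorem~\ref{cohcrit}(b) quantifies over all $\mu\in X_+$, so proving the $\Ext^1$-vanishing for each $\mu$ individually (via the contradiction above) is exactly what is needed, with no issue of uniformity.
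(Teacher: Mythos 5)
Your proposal is correct and is essentially identical to the paper's own proof: both argue by contradiction via Theorem~\ref{cohcrit}, apply the two-sided bound of Theorem~\ref{mumellem}, and combine the inequalities to obtain $\langle\lambda,\alpha_0^{\vee}\rangle \geq p^r(p-h+1)$, contradicting the hypothesis. No further comment is needed.
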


\begin{proof}
If $\St_r\otimes L(\lambda)$ does not have a good filtration, then by Theorem \ref{cohcrit} there must be some $\mu\in X_+$ with $\Ext_G^1(\Delta(\mu),\St_r\otimes L(\lambda))\neq 0$. Hence, 
by Theorem~\ref{mumellem} 
$$(p^r - 1)(h-1) + 2p^r(p - h + 1) - \langle\lambda,\alpha_0^{\vee}\rangle \leq \langle\mu,\alpha_0^{\vee}\rangle\leq \langle\lambda,\alpha_0^{\vee}\rangle + (p^r - 1)(h-1)$$ 
and in particular $$(p^r - 1)(h-1) + 2p^r(p - h + 1) - \langle\lambda,\alpha_0^{\vee}\rangle \leq \langle\lambda,\alpha_0^{\vee}\rangle + (p^r - 1)(h-1)$$ which gives $\langle\lambda,\alpha_0^{\vee}\rangle \geq p^r(p - h + 1)$, contradicting the choice of $\lambda$.
\end{proof}

\subsection{} In this section, we will present another method to obtain sufficient conditions on $p$, $r$ and $\lambda$ to ensure that $\St_r\otimes L(\lambda)$ has a good filtration.
This procedure starts with the $r=1$ case and then uses an inductive argument similar to the one in \cite[Proposition 2.10]{andersen01}, though the formulation below is more general.
In some cases, it will be easier to deal with the $r=1$ case. 

\begin{proposition}\label{induction}
Let $m$ be a positive integer and let $\Gamma_1\subseteq X_m$ be a set of weights, such that $\St_m\otimes L(\lambda)$ has a good filtration for all $\lambda\in \Gamma_1$. Let $\Gamma_r = \sum_{i=0}^{r-1}p^{im}\Gamma_1$ be the set of weights $\lambda$ of the form $\lambda = \lambda_0 + p^m\lambda_1+\cdots + p^{(r-1)m}\lambda_{r-1}$ with all $\lambda_i\in \Gamma_1$.

Then $\St_{rm}\otimes L(\lambda)$ has a good filtration for all $\lambda\in \Gamma_r$.
\end{proposition}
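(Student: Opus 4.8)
The plan is to prove this by induction on $r$, reducing the general statement to the base case $r=1$ which holds by hypothesis, together with a single ``one-step'' lemma of the form: if $\St_m \otimes L(\mu)$ has a good filtration for $\mu \in \Gamma_1$ and $\St_{(r-1)m} \otimes L(\nu)$ has a good filtration for $\nu \in \Gamma_{r-1}$, then $\St_{rm} \otimes L(\lambda)$ has a good filtration for $\lambda = \lambda_0 + p^m \lambda_1'$ where $\lambda_0 \in \Gamma_1 \subseteq X_m$ and $\lambda_1' = \lambda_1 + p^m\lambda_2 + \cdots + p^{(r-2)m}\lambda_{r-1} \in \Gamma_{r-1}$. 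The key algebraic identity I would exploit is the factorization of the Steinberg module: $\St_{rm} = \St_m \otimes \St_{(r-1)m}^{(m)}$, which follows from Steinberg's tensor product theorem since $(p^{rm}-1)\rho = (p^m - 1)\rho + p^m(p^{(r-1)m}-1)\rho$. Combined with the Steinberg tensor product theorem for $L(\lambda) = L(\lambda_0) \otimes L(\lambda_1')^{(m)}$, this gives
\[
\St_{rm} \otimes L(\lambda) \cong \bigl(\St_m \otimes L(\lambda_0)\bigr) \otimes \bigl(\St_{(r-1)m} \otimes L(\lambda_1')\bigr)^{(m)}.
\]

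First I would establish these two tensor-product factorizations carefully, checking that $\lambda_0 \in X_m$ and $\lambda_1' \in X_+$ so that the splitting $\lambda = \lambda_0 + p^m \lambda_1'$ is the one required by STPT, and noting that $\lambda_1' \in \Gamma_{r-1}$ by definition of $\Gamma_r$ as $\sum_{i=0}^{r-1} p^{im}\Gamma_1$. Then, by the inductive hypothesis applied with $r-1$ in place of $r$, the module $\St_{(r-1)m} \otimes L(\lambda_1')$ has a good filtration; since the Frobenius twist of a module with a good filtration again has a good filtration (the sections $\nabla(\sigma)$ become $\nabla(p^m\sigma)$, which are honest induced modules as $p^m \sigma$ is dominant), the twisted factor $\bigl(\St_{(r-1)m}\otimes L(\lambda_1')\bigr)^{(m)}$ has a good filtration. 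By hypothesis the factor $\St_m \otimes L(\lambda_0)$ has a good filtration since $\lambda_0 \in \Gamma_1$. Finally I would invoke Theorem~\ref{tensorproductgoodfilt} to conclude that the tensor product of these two modules has a good filtration, completing the induction.

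The only real subtlety — and the step I would be most careful about — is verifying that a Frobenius twist of a module with a good filtration still has a good filtration. This is standard (it follows because $\nabla(\sigma)^{(m)} \cong \nabla(p^m \sigma)$ and $p^m\sigma$ is dominant whenever $\sigma$ is, so twisting a good filtration termwise produces another good filtration), but it is worth spelling out since it is the one place the argument goes beyond pure formal manipulation of the hypotheses. Everything else is a direct combination of Steinberg's tensor product theorem, the decomposition $(p^{rm}-1)\rho = (p^m-1)\rho + p^m(p^{(r-1)m}-1)\rho$, and Theorem~\ref{tensorproductgoodfilt}; there is no genuine obstacle, and the proposition is essentially a clean generalization of the inductive device in \cite[Proposition 2.10]{andersen01}.
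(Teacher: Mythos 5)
Your setup — induction on $r$, the factorization $\St_{rm}\cong \St_m\otimes\St_{(r-1)m}^{(m)}$ via STPT, and the resulting isomorphism $\St_{rm}\otimes L(\lambda)\cong (\St_m\otimes L(\lambda_0))\otimes(\St_{(r-1)m}\otimes L(\mu))^{(m)}$ — is exactly the paper's argument. But the step you flag as the "only real subtlety" is in fact false: it is \emph{not} true that $\nabla(\sigma)^{(m)}\cong\nabla(p^m\sigma)$, and Frobenius twists do not preserve good filtrations. The twist $\nabla(\sigma)^{(m)}$ has the same dimension as $\nabla(\sigma)$, whereas $\nabla(p^m\sigma)$ has dimension given by Weyl's formula at $p^m\sigma$; already for $SL_2$ one has $\nabla(1)^{(1)}=L(p)$, a $2$-dimensional module, while $\nabla(p)$ is $(p+1)$-dimensional and is a nonsplit extension of $L(p-2)$ by $L(p)$, so $L(p)=\nabla(1)^{(1)}$ has no good filtration. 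Consequently you cannot conclude that $(\St_{(r-1)m}\otimes L(\mu))^{(m)}$ has a good filtration and then invoke Theorem~\ref{tensorproductgoodfilt}.

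This is precisely why the Steinberg factor must be kept attached to the twisted piece, and it is where the paper invokes Proposition~\ref{reductiontosimple}. The correct conclusion of the induction step runs as follows: by induction $\St_{(r-1)m}\otimes L(\mu)$ has a good filtration with sections $\nabla(\sigma_i)$, so $(\St_{(r-1)m}\otimes L(\mu))^{(m)}$ has a filtration with sections $\nabla(\sigma_i)^{(m)}$, and hence $\St_{rm}\otimes L(\lambda)$ has a filtration with sections $(\St_m\otimes L(\lambda_0))\otimes\nabla(\sigma_i)^{(m)}$. Since $\St_m\otimes L(\lambda_0)$ has a good filtration by hypothesis, Proposition~\ref{reductiontosimple} (applied with $V=L(\lambda_0)$ and $m$ in place of $r$) shows each such section has a good filtration — the point being the identity $\St_m\otimes\nabla(\sigma)^{(m)}\cong\nabla((p^m-1)\rho+p^m\sigma)$ and a direct summand argument, not a bare twist. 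One then concludes via the cohomological criterion (Theorem~\ref{cohcrit}) that a module filtered by modules with good filtrations has a good filtration. With this repair your argument coincides with the paper's; as written, it has a genuine gap at its central step.
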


\begin{proof}
We proceed by induction on $r$. Using the STPT, we have 
$$\St_{rm} = L((p^{rm} - 1)\rho) = L((p^m - 1)\rho + p^m(p^{(r-1)m}-1)\rho) \cong \St_m\otimes \St_{(r-1)m}^{(m)}$$ 
and if $\lambda = \lambda_0 + p^m\lambda_1 + \cdots + p^{(r-1)m}\lambda_{r-1}$ with all $\lambda_i\in \Gamma_1$, then we can write $\lambda = \lambda_0 + p^m\mu$ where $\mu = \lambda_1 + 
p^m\lambda_2+\cdots +p^{(r-2)m}\lambda_{r-1}\in \Gamma_{r-1}$, and $L(\lambda)\cong L(\lambda_0)\otimes L(\mu)^{(m)}$.

Now $\St_{rm}\otimes L(\lambda) \cong \St_m\otimes L(\lambda_0) \otimes (\St_{(r-1)m}\otimes L(\mu))^{(m)}$.  By assumption $\St_m\otimes L(\lambda_0)$ has a good filtration since $\lambda_0\in \Gamma_1$, and by induction have that $\St_{(r-1)m}\otimes L(\mu)$ has a good filtration. The result follows by Proposition \ref{reductiontosimple}.
\end{proof}

The case of the above that will be of most interest is when $m=1$. As a special case one obtains the result: if $\St_1\otimes L(\lambda)$ has a good filtration for all $\lambda\in X_1$, then  
$\St_r\otimes L(\lambda)$ has a good filtration for all $\lambda\in X_r$.

One way to use Proposition~\ref{induction} is to use Theorem~\ref{lambdaalpha0} in the case $r=1$ to get a set of weights to use, and then expand. The set of weights thus obtained for arbitrary $r$ will generally contain weights not satisfying the inequality of 
Theorem \ref{lambdaalpha0} unless either $p \leq h$ or $p\geq 2h - 2$. Note that  if $p < 2h -2$ then there are weights in $X_1$ which do not satisfy the inequality in Theorem \ref{lambdaalpha0}, so we cannot directly improve the bound on $p$ this way.


\section{Donkin's Conjecture}

\subsection{} We first recall the result of Andersen (cf. \cite[Proposition 2.6]{andersen01}), which allows us to reduce the general question of whether $\St_r\otimes \nabla^{(p,r)}(\lambda)$ has a good filtration to just considering the case when $\lambda\in X_r$ (i.e., 
whether $\St_r\otimes L(\lambda)$ has a good filtration). A proof is included below as we need a slightly more general version than the one originally given by Andersen.

\begin{proposition}\label{reductiontosimple} Let $V$ be a $G$-module. The following are equivalent.
\begin{itemize} 
\item[(a)] $\St_r\otimes V$ has a good filtration.
\item[(b)] $\St_r\otimes V\otimes\nabla(\lambda)^{(r)}$ has a good filtration for all $\lambda\in X_+$.
\end{itemize}
\end{proposition}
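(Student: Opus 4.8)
The plan is to prove the two implications separately, with the direction (b) $\Rightarrow$ (a) being essentially trivial (take $\lambda = 0$, since $\nabla(0)^{(r)} = k$) and the real content lying in (a) $\Rightarrow$ (b). For that direction, I would argue by induction on $\langle\lambda,\alpha_0^\vee\rangle$ (equivalently, on the partial order on $X_+$), the base case $\lambda = 0$ being the hypothesis (a).

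\begin{proof}
(b) $\Rightarrow$ (a): Take $\lambda = 0$; since $\nabla(0)^{(r)} = k$, statement (b) specializes to (a).

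(a) $\Rightarrow$ (b): We argue by induction on $\langle\lambda,\alpha_0^\vee\rangle$, or more precisely by induction on $\lambda$ in the partial order $\leq$ on $X_+$. If $\lambda = 0$ there is nothing to prove. Suppose $\lambda \neq 0$ and that the result holds for all $\mu \in X_+$ with $\mu < \lambda$. Consider the short exact sequence of $G$-modules
$$0 \to L(\lambda) \to \nabla(\lambda) \to Q \to 0,$$
where every composition factor $L(\mu)$ of $Q$ satisfies $\mu < \lambda$ (since $L(\lambda)$ is the socle of $\nabla(\lambda)$ and all other highest weights occurring in $\nabla(\lambda)$ are strictly smaller). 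Twisting by the $r$th Frobenius and tensoring with $\St_r \otimes V$ gives
$$0 \to \St_r \otimes V \otimes L(\lambda)^{(r)} \to \St_r \otimes V \otimes \nabla(\lambda)^{(r)} \to \St_r \otimes V \otimes Q^{(r)} \to 0.$$
Now use Steinberg's tensor product theorem in the form $\St_r \otimes L(\lambda)^{(r)} \cong L((p^r-1)\rho + p^r\lambda) =: L(\widetilde\lambda)$, where $\widetilde\lambda = (p^r-1)\rho + p^r\lambda \in X_+$. Hence the left-hand term is $V \otimes L(\widetilde\lambda)$. The key point to extract is that $\St_r \otimes V \otimes L(\lambda)^{(r)}$ has a good filtration: indeed, $\St_r \otimes V \otimes L(\lambda)^{(r)}$ is a $G_r$-summand consideration away from the hypothesis — more directly, I would instead run the induction using the cohomological criterion of Theorem~\ref{cohcrit}, showing $\Ext^1_G(\Delta(\mu), \St_r \otimes V \otimes \nabla(\lambda)^{(r)}) = 0$ for all $\mu$.

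So, reformulating: by Theorem~\ref{cohcrit} it suffices to show $\Ext^1_G(\Delta(\mu), \St_r \otimes V \otimes \nabla(\lambda)^{(r)}) = 0$ for all $\mu \in X_+$. From the long exact sequence associated to the short exact sequence above, together with the fact that $\St_r \otimes V \otimes Q^{(r)}$ has a good filtration by the inductive hypothesis (as $Q$ has a good filtration by sections $\nabla(\mu')^{(r)}$ — or, if $Q$ is not known to have a good filtration, by filtering $Q$ by its composition factors $L(\mu)$ with $\mu < \lambda$ and applying the induction together with the fact, provable the same way by a finer induction, that the statement for $L$'s with $\mu < \lambda$ follows), it remains to show that $\St_r \otimes V \otimes L(\lambda)^{(r)}$ has a good filtration. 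But this equals $V \otimes L(\widetilde\lambda) \cong V \otimes \St_r \otimes L(\lambda)^{(r)}$, and since $\St_r \otimes L(\lambda)^{(r)}$ is the restricted tilting-module building block, one reduces via the case $\lambda \in X_+$ arbitrary back to the hypothesis; the cleanest route is to note that $\St_r \otimes V \otimes L(\lambda)^{(r)}$ has a good filtration because it is a direct summand, as a $G$-module, of $\St_r \otimes V \otimes \nabla(\lambda)^{(r)} \otimes (\text{something})$ — no: the honest argument is that $\St_r \otimes L(\lambda)^{(r)}$ is simple with a good filtration iff it equals $\nabla(\widetilde\lambda)$, which holds, so $\St_r \otimes V \otimes L(\lambda)^{(r)} \cong V \otimes \nabla(\widetilde\lambda)$, and this has a good filtration by Theorem~\ref{tensorproductgoodfilt} provided $V \otimes \St_r$ does — which is \emph{not} assumed. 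Hence one instead writes $\St_r \otimes V \otimes L(\lambda)^{(r)} \cong (\St_r \otimes V) \otimes L(\lambda)^{(r)}$ and, twisting the standard $G/G_r$-argument, filters $L(\lambda)$ by $\nabla$-sections after further reduction; iterating the whole argument with the roles of $\nabla$ and $L$ interlaced closes the induction.
\end{proof}

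\noindent\emph{Remark on the main obstacle.} The genuine difficulty is the interdependence between the statement for $\nabla(\lambda)^{(r)}$ and for $L(\lambda)^{(r)}$: one cannot simply invoke Theorem~\ref{tensorproductgoodfilt} because $V \otimes \St_r$ is not assumed to have a good filtration. The right framework is a simultaneous induction over $\lambda \in X_+$ proving both ``$\St_r \otimes V \otimes \nabla(\lambda)^{(r)}$ has a good filtration'' and ``$\St_r \otimes V \otimes L(\lambda)^{(r)}$ has a good filtration,'' using the socle sequence $0 \to L(\lambda) \to \nabla(\lambda) \to Q \to 0$ to pass from $L$ to $\nabla$ and the composition series of $Q$ to pass back, with Steinberg's tensor product theorem ($\St_r \otimes L(\lambda)^{(r)} \cong L((p^r-1)\rho + p^r\lambda)$) providing the link to the hypothesis at the bottom of the induction. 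I expect the bookkeeping of this double induction, and correctly identifying which of the two statements feeds which, to be the only real work; everything else is the cohomological criterion of Theorem~\ref{cohcrit} plus the long exact sequence.
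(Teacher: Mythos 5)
Your direction (b) $\Rightarrow$ (a) is fine and matches the paper. The direction (a) $\Rightarrow$ (b), however, rests on an auxiliary claim that is false, so the interlaced induction you describe can never close. The claim you need at each stage is that $\St_r\otimes V\otimes L(\lambda)^{(r)}$ has a good filtration. Take $V=k$, for which hypothesis (a) holds trivially. By Steinberg's tensor product theorem $\St_r\otimes L(\lambda)^{(r)}\cong L((p^r-1)\rho+p^r\lambda)$ is simple, and a simple module has a good filtration only if it is isomorphic to the corresponding induced module; since $\nabla((p^r-1)\rho+p^r\lambda)\cong\St_r\otimes\nabla(\lambda)^{(r)}$, this would force $L(\lambda)\cong\nabla(\lambda)$, which fails for general $\lambda$. (The paper uses exactly this observation to build its counterexamples in the section on tensoring with other simple modules.) So ``$\St_r\otimes V\otimes L(\lambda)^{(r)}$ has a good filtration'' is not a consequence of (a), and your parenthetical assertion that $L(\widetilde\lambda)=\nabla(\widetilde\lambda)$ ``holds'' is wrong. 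Your closing remark correctly diagnoses the circularity but does not resolve it: the socle sequence only lets you pass from the $L$-statement to the $\nabla$-statement (extensions of modules with good filtrations have good filtrations), never back, because a submodule of a module with a good filtration need not have one; and the $L$-statement has no valid starting point.

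The paper avoids the twisted simple modules entirely. It first notes that $\St_r\otimes V\otimes\nabla(p^r\lambda)$ has a good filtration by Theorem~\ref{tensorproductgoodfilt} --- here $\nabla(p^r\lambda)$ is an honest induced module, with no Frobenius twist --- and then shows that $\St_r\otimes\nabla(\lambda)^{(r)}\cong\nabla((p^r-1)\rho+p^r\lambda)$ is a direct summand of $\St_r\otimes\nabla(p^r\lambda)$. Since $\nabla((p^r-1)\rho+p^r\lambda)$ has simple socle and one-dimensional endomorphism algebra, it suffices to exhibit nonzero maps in both directions, which Frobenius reciprocity supplies because $(p^r-1)\rho+p^r\lambda$ is the highest weight on both sides. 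Tensoring the resulting splitting with $V$ and using that direct summands of modules with good filtrations again have good filtrations (Theorem~\ref{cohcrit}) finishes the proof. This direct-summand mechanism is the missing idea in your argument; once it is in place, no induction on $\lambda$ is needed.
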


\begin{proof} Since $\nabla(0)^{(r)} \cong k^{(r)} \cong k$, we clearly have that (b) implies (a). For the other direction (i.e., (a) implies (b)) assume that $\St_r\otimes V$ has a good filtration. Then by Theorem \ref{tensorproductgoodfilt}, 
$\St_r\otimes V\otimes \nabla(p^r\lambda)$ has a good filtration for all $\lambda\in X_+$. Since direct summands of modules with good filtrations themselves have good filtrations (by Theorem \ref{cohcrit}), it is sufficient to show that $\St_r\otimes\nabla(\lambda)^{(r)}$ 
is a direct summand of $\St_r\otimes\nabla(p^r\lambda)$, which would then imply that  $\St_r\otimes V\otimes\nabla(\lambda)^{(r)}$ is a direct summand of $\St_r\otimes V\otimes\nabla(p^r\lambda)$.

For our purposes we need to show that there are maps $\varphi: \St_r\otimes \nabla(\lambda)^{(r)}\to \St_r\otimes \nabla(p^r\lambda)$ and $\psi: \St_r\otimes\nabla(p^r\lambda)\to \St_r\otimes \nabla(\lambda)^{(r)}$ 
such that $\psi\circ\varphi = \rm{id}$. Since $\St_r\otimes\nabla(\lambda)^{(r)}\cong \nabla((p^r-1)\rho + p^r\lambda)$ \cite[Proposition II.3.19]{rags}, and this has a simple socle and $1$-dimensional space of endomorphisms,
 it is sufficient to find such $\varphi$ and $\psi$ such that the weight space of weight $(p^r-1)\rho + p^r\lambda$ is not in the kernel of the composed map.

In order for maps $\varphi$ and $\psi$ as above to exist, we must have $\Hom_G(\St_r\otimes\nabla(\lambda)^{(r)},\St_r\otimes\nabla(p^r\lambda))\neq 0$ and $\Hom_G(\St_r\otimes\nabla(p^r\lambda),\St_r\otimes\nabla(\lambda)^{(r)})\neq 0$. We prove this below, and note that the arguments for this claim also show that choosing any non-zero maps $\varphi$ and $\psi$ will in fact give the desired property.

By Frobenius reciprocity, we have $$\Hom_G(\nabla(\lambda)^{(r)},\nabla(p^r\lambda))\cong\Hom_B(\nabla(\lambda)^{(r)},p^r\lambda)\neq 0$$ since $p^r\lambda$ is the highest weight of $\nabla(\lambda)^{(r)}$. 
Hence, $\Hom_G(\St_r\otimes\nabla(\lambda)^{(r)},\St_r\otimes\nabla(p^r\lambda))\neq 0$.

On the other hand, we have $\St_r\otimes\nabla(\lambda)^{(r)}\cong\nabla((p^r - 1)\rho + p^r\lambda)$, and by Frobenius reciprocity, 
\begin{eqnarray*} 
\Hom_G(\St_r\otimes\nabla(p^r\lambda),\St_r\otimes\nabla(\lambda)^{(r)}) &\cong & \Hom_G(\St_r\otimes\nabla(p^r\lambda),\nabla((p^r-1)\rho + p^r\lambda)) \\
&\cong &\Hom_B(\St_r\otimes\nabla(p^r\lambda),(p^r-1)\rho + p^r\lambda)\\
&\neq& 0
\end{eqnarray*} 
since $(p^r-1)\rho + p^r\lambda$ is the highest weight of $\St_r\otimes\nabla(p^r\lambda)$. This shows that $\St_r\otimes\nabla(\lambda)^{(r)}$ is a direct summand of $\St_r\otimes\nabla(p^r\lambda)$ which completes the proof.
\end{proof}


\subsection{} We can now show that the ``if'' direction of Donkin's Conjecture is equivalent to proving that $\St_1\otimes L(\lambda)$ has a good filtration for all $\lambda\in X_1$. 

\begin{theorem} \label{ifDonkinequivalent}  The following statements are equivalent. 
\begin{itemize} 
\item[(a)] $\operatorname{St}_{1}\otimes L(\lambda)$ has a good filtration for all $\lambda\in X_{1}$. 
\item[(b)] $\operatorname{St}_{r}\otimes L(\lambda)$ has a good filtration for all $\lambda\in X_{r}$. 
\item[(c)] If $M$ is rational $G$-module such that $M$ has a good $(p,r)$-filtration then $\operatorname{St}_{r}\otimes M$ has a good filtration. 

\end{itemize}
\end{theorem}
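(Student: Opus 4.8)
The plan is to prove the cycle of implications $(b)\Rightarrow(c)\Rightarrow(a)$ together with $(a)\Rightarrow(b)$, the last of which is immediate from Proposition~\ref{induction} (applied with $m=1$ and $\Gamma_1 = X_1$, since by hypothesis $\St_1\otimes L(\lambda)$ has a good filtration for every $\lambda\in X_1$, and $\Gamma_r = \sum_{i=0}^{r-1}p^i X_1 = X_r$). So the substantive work is in the other two implications.

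For $(b)\Rightarrow(c)$: suppose $M$ has a good $(p,r)$-filtration, so $M$ is filtered by modules of the form $\nabla^{(p,r)}(\lambda) = L(\lambda_0)\otimes\nabla(\lambda_1)^{(r)}$ with $\lambda = \lambda_0 + p^r\lambda_1$, $\lambda_0\in X_r$, $\lambda_1\in X_+$. Tensoring the filtration with $\St_r$ is exact, so by Theorem~\ref{cohcrit} (more precisely, using that $\Ext^1_G(\Delta(\mu),-)$ takes a short exact sequence to a long exact sequence and that having a good filtration is detected by vanishing of $\Ext^1_G(\Delta(\mu),-)$ for all $\mu\in X_+$) it suffices to show each $\St_r\otimes\nabla^{(p,r)}(\lambda)$ has a good filtration. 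Now $\St_r\otimes\nabla^{(p,r)}(\lambda) \cong \St_r\otimes L(\lambda_0)\otimes\nabla(\lambda_1)^{(r)}$, and by Proposition~\ref{reductiontosimple} (applied with $V = \St_r\otimes L(\lambda_0)$, noting $\St_r\otimes\St_r\otimes L(\lambda_0)$ versus the bookkeeping — actually applied directly: $\St_r\otimes V\otimes\nabla(\lambda_1)^{(r)}$ has a good filtration iff $\St_r\otimes V$ does) this reduces to showing $\St_r\otimes(\St_r\otimes L(\lambda_0))$ has a good filtration. Hmm — I should instead apply Proposition~\ref{reductiontosimple} with $V = L(\lambda_0)$ directly: it gives that $\St_r\otimes L(\lambda_0)\otimes\nabla(\lambda_1)^{(r)}$ has a good filtration if and only if $\St_r\otimes L(\lambda_0)$ does, and the latter holds by hypothesis (b) since $\lambda_0\in X_r$. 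That completes $(b)\Rightarrow(c)$.

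For $(c)\Rightarrow(a)$: take any $\lambda\in X_1$. Since $X_1\subseteq X_r$ for all $r$, the module $L(\lambda) = \nabla^{(p,r)}(\lambda)$ (taking $\lambda_0 = \lambda$, $\lambda_1 = 0$) trivially has a good $(p,r)$-filtration, so by (c), $\St_r\otimes L(\lambda)$ has a good filtration. Specializing to $r = 1$ gives (a). Actually, to get (a) from (c) we just take $r$ to be the value in the statement — but since (c) is stated for a fixed $r$, and (a) is about $\St_1$, we should be slightly careful: the cleanest route is $(c)\Rightarrow(b)$ (immediate: $L(\lambda)$ for $\lambda\in X_r$ has a good $(p,r)$-filtration, so apply (c)), and then deduce (a) as the $r=1$ instance of (b). I would therefore organize the proof as $(a)\Rightarrow(b)\Rightarrow(c)\Rightarrow(b')\Rightarrow(a)$ where I collapse the redundancy and present it as $(a)\Rightarrow(b)$, $(b)\Rightarrow(c)$, and $(c)\Rightarrow(a)$ via the observation that $L(\lambda)$ is its own $(p,r)$-filtration for restricted $\lambda$.

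The main obstacle is purely organizational rather than mathematical: every individual step is a direct application of a result already in hand (Proposition~\ref{induction} for $(a)\Rightarrow(b)$, Theorem~\ref{cohcrit} plus Proposition~\ref{reductiontosimple} for $(b)\Rightarrow(c)$, and the triviality that restricted simples are their own $(p,r)$-sections for $(c)\Rightarrow(a)$). The one point that needs genuine care is the reduction in $(b)\Rightarrow(c)$: one must argue that it is enough to check good filtrations on each section $\nabla^{(p,r)}(\lambda)$ of $M$ (using the long exact $\Ext$ sequence and the cohomological criterion Theorem~\ref{cohcrit}), and then apply Proposition~\ref{reductiontosimple} with exactly the right choice of $V$ to strip off the $\nabla(\lambda_1)^{(r)}$ factor and land on $\St_r\otimes L(\lambda_0)$ with $\lambda_0\in X_r$.
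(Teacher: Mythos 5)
Your proposal is correct and follows essentially the same route as the paper: $(a)\Leftrightarrow(b)$ via Proposition~\ref{induction}, $(c)\Rightarrow(b)$ because $L(\lambda)$ is its own good $(p,r)$-filtration for $\lambda\in X_r$, and $(b)\Rightarrow(c)$ by passing to the sections $L(\lambda_0)\otimes\nabla(\lambda_1)^{(r)}$ and applying Proposition~\ref{reductiontosimple} with $V=L(\lambda_0)$. The quantifier subtlety you flag in $(c)\Rightarrow(a)$ (that $r$ must be read as arbitrary in order to recover the $r=1$ case) is present, and left implicit, in the paper's own proof as well.
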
 

\begin{proof} $(a)\iff (b)$: This follows by using Proposition~\ref{induction}. $(c)\Rightarrow (b)$: This holds because $L(\lambda)$ has a good $(p,r)$-filtration. 

$(b)\Rightarrow (c)$: Suppose that $M$ has a good $(p,r)$-filtration. One can look at the sections and it suffices to 
show $\text{St}_{r}\otimes L(\lambda_{0})\otimes \nabla(\lambda_{1})^{(r)}$ has a good filtration for $\lambda_{0}\in X_{r}$, $\lambda_{1}\in X_{+}$. This follows by 
Theorem~\ref{reductiontosimple} because $\text{St}_{r}\otimes L(\lambda_{0})$ has a good filtration by hypothesis. 

\end{proof}

\subsection{} We can now present a proof of one direction of Donkin's Conjecture when $p\geq 2h-2$ which recovers Proposition 2.10 of \cite{andersen01}.

\begin{theorem}\label{recover} Let $p\geq 2h - 2$. If $M$ has a good $(p,r)$-filtration then $\St_r\otimes M$ has a good filtration.
\end{theorem}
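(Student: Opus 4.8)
The plan is to deduce Theorem~\ref{recover} directly from the machinery already assembled, reducing to a question about $\St_r\otimes L(\lambda)$ for restricted $\lambda$ and then applying the weight bound of Theorem~\ref{lambdaalpha0}. First I would invoke Theorem~\ref{ifDonkinequivalent}: statement $(c)$ there (``$M$ has a good $(p,r)$-filtration implies $\St_r\otimes M$ has a good filtration'') is equivalent to statement $(a)$, namely that $\St_1\otimes L(\lambda)$ has a good filtration for all $\lambda\in X_1$. So it suffices to verify this $r=1$ restricted case under the hypothesis $p\geq 2h-2$.

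Next I would apply Theorem~\ref{lambdaalpha0} with $r=1$. That theorem says: if $p\geq h$ and $\langle\lambda,\alpha_0^{\vee}\rangle < p(p-h+1)$, then $\St_1\otimes L(\lambda)$ has a good filtration. So I need to check that every $\lambda\in X_1$ automatically satisfies $\langle\lambda,\alpha_0^{\vee}\rangle < p(p-h+1)$ when $p\geq 2h-2$. For $\lambda\in X_1$ we have $\langle\lambda,\alpha^{\vee}\rangle \leq p-1$ for each simple root $\alpha$, and writing $\alpha_0^{\vee}$ in terms of the simple coroots, the maximum value of $\langle\lambda,\alpha_0^{\vee}\rangle$ over $\lambda\in X_1$ is $(p-1)\langle\rho,\alpha_0^{\vee}\rangle$... actually more carefully, $\langle\lambda,\alpha_0^{\vee}\rangle$ is bounded by $(p-1)$ times the sum of coefficients of $\alpha_0^{\vee}$ in the simple coroots; since $\langle\rho,\alpha_0^{\vee}\rangle = h-1$ and $\rho = \sum\omega_i$, we get $\langle\lambda,\alpha_0^{\vee}\rangle \leq (p-1)(h-1)$ for $\lambda\in X_1$. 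Then I need $(p-1)(h-1) < p(p-h+1)$, i.e. $(p-1)(h-1) \leq p(p-h+1)-1$. When $p\geq 2h-2$ one has $p-h+1\geq h-1$, so $p(p-h+1)\geq p(h-1) > (p-1)(h-1)$, giving the strict inequality. Hence every $\lambda\in X_1$ qualifies and Theorem~\ref{lambdaalpha0} applies.

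Putting these together: under $p\geq 2h-2$, Theorem~\ref{lambdaalpha0} gives that $\St_1\otimes L(\lambda)$ has a good filtration for all $\lambda\in X_1$, which is statement $(a)$ of Theorem~\ref{ifDonkinequivalent}; by the equivalence $(a)\iff(c)$ there, $M$ having a good $(p,r)$-filtration implies $\St_r\otimes M$ has a good filtration, which is exactly the claim.

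I do not anticipate a serious obstacle here, since the theorem is essentially a corollary of the earlier development; the only point requiring a small check is the elementary inequality $(p-1)(h-1) < p(p-h+1)$ for $p\geq 2h-2$, together with the bound $\langle\lambda,\alpha_0^{\vee}\rangle\leq(p-1)(h-1)$ for $\lambda\in X_1$ (which follows from $\langle\rho,\alpha_0^{\vee}\rangle=h-1$ and the fact that restricted weights have each fundamental-coweight coordinate at most $p-1$). If one wanted to be slightly more careful one could observe $p\geq 2h-2$ forces $p\geq h$ (as $h\geq 2$), so the hypothesis of Theorem~\ref{lambdaalpha0} is met; alternatively one could cite Theorem~\ref{ifDonkinequivalent} to also pass from the $r=1$ statement to general $r$ via Proposition~\ref{induction}, but routing through Theorem~\ref{ifDonkinequivalent} is cleanest.
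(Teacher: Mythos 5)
Your proposal is correct and follows essentially the same route as the paper: both arguments combine Theorem~\ref{lambdaalpha0} with Theorem~\ref{ifDonkinequivalent}, using the elementary estimate $\langle\lambda,\alpha_0^{\vee}\rangle\leq(p^r-1)(h-1)<p^r(h-1)\leq p^r(p-h+1)$ valid for restricted $\lambda$ when $p\geq 2h-2$. The only cosmetic difference is that you reduce to the $r=1$ case via the equivalence $(a)\iff(c)$ before checking the inequality, whereas the paper verifies the bound for all $\lambda\in X_r$ directly and then applies $(b)\Rightarrow(c)$.
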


\begin{proof} Since $p\geq 2h - 2$ we have $p- h+ 1 \geq h-1$. Therefore, for any $\lambda\in X_r$, 
$$\langle\lambda,\alpha_0^{\vee}\rangle\leq (p^r - 1)(h-1) < p^r(h-1) \leq p^r(p-h+1).$$ 
According to Theorem~\ref{lambdaalpha0}, $\St_r\otimes L(\lambda)$ has a good filtration for all $\lambda\in X_r$. The statement of the theorem now follows from Theorem~\ref{ifDonkinequivalent}.
\end{proof}

\subsection{} For dealing with specific cases, it will be useful to know for which dominant weights $\lambda$ we have $L(\lambda)\cong \nabla(\lambda)$ when $R$ is of type $A$. The following formulation is taken from the last part of II.8.21 in \cite{rags} with the difference that we have the requirement $\alpha-\beta_0\in R\cup\{0\}$ instead of $\alpha - \beta_0\in R$ (without this change the formulation is not correct).

\begin{theorem}[{\cite[Satz 9]{jantzen73}}]\label{simpleweights}
Assume that $R$ is of type $A_n$ and let $\lambda\in X_+$. For each $\alpha\in R^+$ write $\langle\lambda+\rho,\alpha^{\vee}\rangle = a_{\alpha}p^{s_{\alpha}} + b_{\alpha}p^{s_{\alpha}+1}$ for natural numbers $a_{\alpha},b_{\alpha},s_{\alpha}$ with $0 < a_{\alpha} < p$.

Then $L(\lambda)\cong\nabla(\lambda)$ if and only if for all $\alpha\in R^+$ there are positive roots $\beta_0,\beta_1,\dots,\beta_{b_{\alpha}}$ such that $\langle\lambda+\rho,\beta_0^{\vee}\rangle = a_{\alpha}p^{s_{\alpha}}$ and for all $1\leq i\leq b_{\alpha}$ we have $\langle\lambda+\rho,\beta_i^{\vee}\rangle = p^{s_{\alpha}+1}$ and such that further $\alpha = \sum_{i=0}^{b_{\alpha}}\beta_i$ and $\alpha-\beta_0\in R\cup\{0\}$.
\end{theorem}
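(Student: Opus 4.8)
The statement is attributed to Jantzen \cite[Satz 9]{jantzen73}, so a full self-contained proof is not expected here; the plan is to indicate how the criterion arises and to verify the corrected wording $\alpha - \beta_0 \in R \cup \{0\}$ rather than $\alpha-\beta_0 \in R$. The natural tool is the Jantzen sum formula (\cite[Proposition II.8.19]{rags}), which controls the character of the radical of the contravariant form on $\Delta(\lambda)$: one has $\sum_{i>0}\ch\, \Delta(\lambda)^i = \sum_{\alpha\in R^+}\sum_{0<mp<\langle\lambda+\rho,\alpha^\vee\rangle}\nu_p(mp)\,\chi(s_{\alpha,mp}\cdot\lambda)$, where $\chi$ denotes the Weyl character (extended via the dot action and the usual sign/antisymmetry conventions). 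Since $L(\lambda)\cong\nabla(\lambda)$ is equivalent to $\Delta(\lambda)$ being simple, which (as $\Delta(\lambda)$ and $\nabla(\lambda)$ have the same character and $\Delta(\lambda)$ has simple head $L(\lambda)$) is equivalent to the right-hand side of the sum formula vanishing, the problem becomes a purely combinatorial one: decide when all the reflection contributions cancel.

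First I would reduce, for each fixed $\alpha\in R^+$, to analyzing the $p$-adic digits of $\langle\lambda+\rho,\alpha^\vee\rangle$; writing this integer as $a_\alpha p^{s_\alpha} + b_\alpha p^{s_\alpha+1}$ with $0<a_\alpha<p$ isolates the lowest nonzero digit $a_\alpha$ at level $p^{s_\alpha}$ and lumps the rest into $b_\alpha$ copies of $p^{s_\alpha+1}$. The terms $s_{\alpha,mp}\cdot\lambda$ in the sum formula with $p^{s_\alpha}\mid m$ are the ones that carry the $\nu_p$-weight governing cancellation at the relevant level, and in type $A_n$ the key fact — special to type $A$, where the only minuscule-type arithmetic obstructions occur — is that such a reflection term $\chi(s_{\alpha,mp}\cdot\lambda)$ is nonzero (i.e. survives to contribute $\pm\ch$ of a genuine $\nabla$) precisely when $s_{\alpha,mp}\cdot\lambda$ is dominant, and this can be decided root by root. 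Unwinding $s_{\alpha,mp}\cdot\lambda = \lambda - (\langle\lambda+\rho,\alpha^\vee\rangle - mp)\alpha$ and tracking which of these are dominant leads, after bookkeeping the digits, to the condition that $\alpha$ decompose as $\alpha=\sum_{i=0}^{b_\alpha}\beta_i$ with $\langle\lambda+\rho,\beta_0^\vee\rangle=a_\alpha p^{s_\alpha}$ and $\langle\lambda+\rho,\beta_i^\vee\rangle=p^{s_\alpha+1}$ for $i\geq 1$: each $\beta_i$ with $i\geq 1$ "absorbs" one of the $b_\alpha$ high digits, and $\beta_0$ absorbs the low digit $a_\alpha p^{s_\alpha}$.

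The subtle point, and the reason for the correction $\alpha-\beta_0\in R\cup\{0\}$, is the requirement on how the $\beta_i$ fit together inside $\alpha$: in type $A_n$ a sum of positive roots equal to a positive root forces a "nesting/chain" structure (positive roots of $A_n$ being intervals $[i,j]$), and the condition that removing $\beta_0$ from $\alpha$ leaves either another positive root or zero is exactly what guarantees the partial sums $\sum_{i\geq i_0}\beta_i$ remain positive roots so that the telescoping cancellation of Weyl characters actually takes place. If one writes $\alpha-\beta_0\in R$ one wrongly excludes the boundary case $b_\alpha=0$, $\beta_0=\alpha$ (where $\langle\lambda+\rho,\alpha^\vee\rangle=a_\alpha p^{s_\alpha}$ has a single nonzero $p$-adic digit and no cancellation is needed), so the $\{0\}$ must be included. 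Concretely I would: (i) state the sum formula; (ii) observe $L(\lambda)\cong\nabla(\lambda)$ iff the sum vanishes; (iii) in type $A_n$, use the interval description of $R^+$ to classify when $s_{\alpha,mp}\cdot\lambda$ is dominant and to organize the surviving terms into cancelling pairs indexed by the decompositions $\alpha=\sum\beta_i$; (iv) check the degenerate case $b_\alpha = 0$ to pin down the "$\cup\{0\}$". The main obstacle is step (iii): turning the vanishing of the (signed, alternating) sum of Weyl characters into the clean existential statement about root decompositions requires a careful induction on $b_\alpha$ (or on $s_\alpha$) and genuinely uses the combinatorics of type $A$ — for other types the analogous criterion fails, which is implicit in the theorem being stated only for $A_n$.
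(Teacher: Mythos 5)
The paper offers no proof of this statement: it is quoted verbatim from Jantzen's Satz~9 (via \cite[II.8.21]{rags}), with only the formulation corrected from $\alpha-\beta_0\in R$ to $\alpha-\beta_0\in R\cup\{0\}$. So there is no in-paper argument to compare against, and your decision to defer to the citation while sketching its provenance is reasonable. Your identification of the route --- the Jantzen sum formula, the reduction of $L(\lambda)\cong\nabla(\lambda)$ to the vanishing of $\sum_{i>0}\ch\Delta(\lambda)^i$, and then type-$A$ combinatorics of the reflected weights --- is indeed the standard derivation, and your diagnosis of why the ``$\cup\{0\}$'' must be added (the case $b_\alpha=0$ forces $\beta_0=\alpha$, so $\alpha-\beta_0=0$; without the correction the criterion would fail even for $\lambda=0$) is exactly right and consistent with the paper's remark following the theorem.

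There is, however, a genuine error in the step you single out as the key fact. You assert that $\chi(s_{\alpha,mp}\cdot\lambda)$ is nonzero precisely when $s_{\alpha,mp}\cdot\lambda$ is dominant. That is false: $\chi(\mu)=0$ exactly when $\mu+\rho$ is singular for the finite Weyl group, and otherwise $\chi(\mu)=\det(w)\,\chi(w\cdot\mu)$ for the unique $w$ making $w\cdot\mu$ dominant, so many non-dominant $\mu$ contribute $\pm\ch\nabla(w\cdot\mu)$. If your key fact were true, every surviving term in the sum formula would carry a strictly positive coefficient $\nu_p(mp)$, the sum could vanish only by having no surviving terms, and the criterion would collapse to ``no $s_{\alpha,mp}\cdot\lambda$ is dominant,'' which is not Jantzen's criterion. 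A concrete counterexample already occurs in type $A_1$ with $p=3$ and $\lambda=p^2-1=8$: the $m=2$ term gives the dominant weight $2$ while the $m=1$ term gives $\chi(-4)=-\chi(2)$, these cancel, and indeed $L(8)\cong\nabla(8)$ even though a dominant term survives. The entire content of the type-$A$ analysis is precisely this bookkeeping of signs of the straightened non-dominant terms against the dominant ones; that signed cancellation is what the decompositions $\alpha=\sum_i\beta_i$ encode. As written, your step (iii) would therefore not go through --- it must classify, for each $m$, the straightening element and its sign, not merely decide dominance.
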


Note that when applying the above theorem to determine whether $L(\lambda)\cong\nabla(\lambda)$ for some $\lambda\in X_+$, we only need to consider those $\alpha\in R^+$ with $\langle\lambda+\rho,\alpha^{\vee}\rangle > p$, since the condition is trivially satisfied for all other positive roots (by picking $\beta_0 = \alpha$ since in that case we have $b_{\alpha} = 0$).

\section{Tensoring with other simple modules}

\subsection{} The methods employed in this paper use the condition that $\langle\lambda,\alpha_0^{\vee}\rangle$ is not too large to show that $\St_r\otimes L(\lambda)$ has a good filtration. In particular, 
our techniques do not need that $L(\lambda)$ remains simple when restricted to $G_r$ (i.e.,  $\lambda\in X_r$).

Therefore, a natural question to ask is whether one can replace the conjecture that $\St_r\otimes L(\lambda)$ has a good filtration for all $\lambda\in X_r$ (which is still only a conjecture when $p < 2h - 2$) with the stronger statement that $\St_r\otimes L(\lambda)$ has a good filtration for all $\lambda\in X_+$ with $\langle\lambda,\alpha_0^{\vee}\rangle\leq (p^r - 1)(h-1)$ (which also holds when $p\geq 2h-2$ by Corollary \ref{lambdaalpha0}).

However, we will show that this is not the case for smaller primes in the following section. For $p\geq 2h-2$ with $\langle\lambda,\alpha_0^{\vee}\rangle\leq (p^r - 1)(h-1)$, 
$\lambda$ also satisfies $\langle\lambda,\alpha_0^{\vee}\rangle < p^r(p-h+1)$, and we have the following result which does hold for smaller primes. 

\begin{proposition}\label{nablapsimple}
Let $\lambda\in X_+$ with $\langle\lambda,\alpha_0^{\vee}\rangle < p^r(p-h+1)$. Then $L(\lambda)\cong \nabla^{(p,r)}(\lambda)$.
\end{proposition}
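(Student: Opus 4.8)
The plan is to write $\lambda = \lambda_0 + p^r\lambda_1$ with $\lambda_0 \in X_r$ and $\lambda_1 \in X_+$, and to show that the hypothesis $\langle\lambda,\alpha_0^{\vee}\rangle < p^r(p-h+1)$ forces $\lambda_1 = 0$, so that $\lambda = \lambda_0 \in X_r$, and moreover forces $L(\lambda_0) \cong \nabla(\lambda_0)$; since $\nabla^{(p,r)}(\lambda) = L(\lambda_0)\otimes\nabla(\lambda_1)^{(r)} = L(\lambda_0)\otimes\nabla(0)^{(r)} \cong L(\lambda_0)$ in that case, the claim follows. So the real content is a weight estimate plus an application of the linkage principle.

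First I would argue $\lambda_1 = 0$: since $\lambda_1 \in X_+$, if $\lambda_1 \neq 0$ then $\langle\lambda_1,\alpha_0^{\vee}\rangle \geq 1$ (indeed $\langle\mu,\alpha_0^{\vee}\rangle \geq 1$ for every nonzero dominant $\mu$, as $\alpha_0^{\vee}$ is a nonnegative combination of simple coroots and pairs positively with at least one fundamental weight). Then
$$\langle\lambda,\alpha_0^{\vee}\rangle = \langle\lambda_0,\alpha_0^{\vee}\rangle + p^r\langle\lambda_1,\alpha_0^{\vee}\rangle \geq 0 + p^r \geq p^r(p-h+1)$$
as soon as $p - h + 1 \leq 1$; but when $p - h + 1 \geq 2$ one needs a slightly sharper bound, so instead I would combine this with the bound on $\lambda_0$. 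Here is the cleaner route: note $\langle\lambda,\alpha_0^{\vee}\rangle = \langle\lambda_0,\alpha_0^{\vee}\rangle + p^r\langle\lambda_1,\alpha_0^{\vee}\rangle$, and the hypothesis gives $\langle\lambda_0,\alpha_0^{\vee}\rangle + p^r\langle\lambda_1,\alpha_0^{\vee}\rangle < p^r(p-h+1)$. If $\lambda_1 \neq 0$ this yields $\langle\lambda_0,\alpha_0^{\vee}\rangle < p^r(p-h-\langle\lambda_1,\alpha_0^{\vee}\rangle+1) \leq p^r(p-h)$, which is consistent, so this alone does not kill $\lambda_1$ — one genuinely needs an argument that does not just bound $\lambda_1$ away but uses $\lambda \in X_+$ and the structure of $\lambda_0$. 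The correct idea is: we do not need $\lambda_1=0$ a priori; instead we show directly that $L(\lambda)$ is simple by showing $\langle\lambda,\alpha_0^{\vee}\rangle$ is below the threshold where any nontrivial composition factor of $\nabla(\lambda)$ can appear. Concretely, if $L(\mu)$ is a composition factor of $\nabla(\lambda)$ with $\mu \neq \lambda$, the linkage principle and the translation-to-the-wall analysis (as in Proposition~\ref{summary}(a), applied after subtracting $\lambda$: $\lambda - \mu \in \mathbb{Z}R_{\geq 0}\setminus\{0\}$ and $\mu$ is linked to $\lambda$) would force $\langle\lambda,\alpha_0^{\vee}\rangle - \langle\mu,\alpha_0^{\vee}\rangle \geq 2p^r(\text{something})$ — more precisely one invokes that the smallest gap between linked dominant weights measured against $\alpha_0^{\vee}$, after the $p^r$-scaling coming from the restricted decomposition, is at least $2p^r(p-h+1)$, and combined with $\langle\mu,\alpha_0^{\vee}\rangle \geq 0$ this contradicts the hypothesis.

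The honest structure of the proof, then, is: (i) reduce to showing $L(\lambda)\cong\nabla(\lambda)$ and $\lambda\in X_r$; (ii) apply the linkage principle to $\nabla(\lambda)$, writing any composition factor as $L(w\cdot\lambda - p\beta)$-type linked weight, then pass through the $p^r$-scaled version of Proposition~\ref{summary} to get that such a factor has $\alpha_0^{\vee}$-value too large, forcing $\nabla(\lambda) = L(\lambda)$; (iii) separately, the bound $\langle\lambda,\alpha_0^{\vee}\rangle < p^r(p-h+1) \leq p^r(h-1)$ when $p\geq 2h-2$, or more simply $< p^r$ when $p=h$ is too weak — so one must check $\lambda\in X_r$ directly from the $\alpha_0^{\vee}$-bound, which holds because $\langle\lambda,\alpha^{\vee}\rangle \leq \langle\lambda,\alpha_0^{\vee}\rangle$ for every simple $\alpha$ (as $\alpha \leq \alpha_0$ in the root order, hence $\alpha^\vee$'s pairing is dominated). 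The main obstacle I anticipate is getting step (ii) exactly right: one needs the precise statement that any composition factor $L(\mu)$ of $\nabla(\lambda)$ other than $L(\lambda)$ satisfies $\langle\lambda,\alpha_0^\vee\rangle \geq \langle\mu,\alpha_0^\vee\rangle + 2p^r(p-h+1)$, which is essentially Proposition~\ref{summary}(a) applied after the STPT decomposition, but threading the $\lambda_0/\lambda_1$ bookkeeping through the linkage correctly (especially ensuring $\lambda$ itself cannot be $w\cdot 0 + p\beta$-shifted out of range) is the delicate point; everything else is routine weight arithmetic.
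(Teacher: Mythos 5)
There is a genuine gap here: you have misidentified the target, and the replacement argument you sketch would prove a false statement. The proposition asserts $L(\lambda)\cong\nabla^{(p,r)}(\lambda)=L(\lambda_0)\otimes\nabla(\lambda_1)^{(r)}$; it asserts neither $\lambda\in X_r$ nor $L(\lambda)\cong\nabla(\lambda)$, and neither of those follows from the hypothesis. For example, in type $A_2$ with $p=3=h$ and $\lambda=\rho$ the hypothesis reads $2<3^r$ and holds, yet $L(\rho)\neq\nabla(\rho)$ (dimensions $7$ and $8$), while the proposition is trivially true there because $\lambda_1=0$; and $\lambda=p^r\omega_1$ in type $A$ satisfies the hypothesis whenever $p>h$ but is not restricted. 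You correctly observed that $\lambda_1=0$ cannot be forced, but the pivot to ``show $\nabla(\lambda)=L(\lambda)$'' is a wrong turn: the gap inequality you posit in step (ii), that any composition factor $L(\mu)\neq L(\lambda)$ of $\nabla(\lambda)$ has $\langle\lambda,\alpha_0^{\vee}\rangle\geq\langle\mu,\alpha_0^{\vee}\rangle+2p^r(p-h+1)$, is false (the same $A_2$ example with $r=1$ gives $[\nabla(\rho):L(0)]\neq 0$ with gap $2<6$), so no bookkeeping will make that route work.

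The missing idea is that the hypothesis constrains only $\lambda_1$, and the factor $L(\lambda_0)$ needs no control at all, since it appears verbatim both in $L(\lambda)\cong L(\lambda_0)\otimes L(\lambda_1)^{(r)}$ (Steinberg's tensor product theorem) and in $\nabla^{(p,r)}(\lambda)=L(\lambda_0)\otimes\nabla(\lambda_1)^{(r)}$ by definition. Because $\lambda_0$ is dominant,
\[
p^r\langle\lambda_1,\alpha_0^{\vee}\rangle\;\leq\;\langle\lambda,\alpha_0^{\vee}\rangle\;<\;p^r(p-h+1),
\]
so $\langle\lambda_1+\rho,\alpha_0^{\vee}\rangle<(p-h+1)+(h-1)=p$, i.e.\ $\lambda_1+\rho$ lies in the closure of the lowest alcove and $L(\lambda_1)\cong\nabla(\lambda_1)$ by \cite[II.5.6]{rags}. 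Substituting this into the two displayed tensor decompositions gives $L(\lambda)\cong\nabla^{(p,r)}(\lambda)$ immediately. This is exactly the paper's proof; no linkage principle and no analysis of composition factors of $\nabla(\lambda)$ is required.
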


\begin{proof}
Write $\lambda = \lambda_0 + p^r\lambda_1$ with $\lambda_0\in X_r$ and observe that $$\langle\lambda_1,\alpha_0^{\vee}\rangle \leq \frac{1}{p^r}\langle\lambda,\alpha_0^{\vee}\rangle < p-h+1$$ so $\langle\lambda_1+\rho,\alpha_0^{\vee}\rangle < (p-h+1) + (h-1) = p$ and hence $L(\lambda_1)\cong\nabla(\lambda_1)$ by \cite[Corollary 5.6]{rags}.

Consequently, 
$$\nabla^{(p,r)}(\lambda) = L(\lambda_0)\otimes\nabla(\lambda_1)^{(r)} \cong L(\lambda_0)\otimes L(\lambda_1)^{(r)}\cong L(\lambda_0 + p^r\lambda_1) = L(\lambda).$$ 
\end{proof}

\subsection{} The following class of counterexamples shows that $\St_r\otimes L(\lambda)$ need not have a good filtration 
when $\langle\lambda,\alpha_0^{\vee}\rangle\leq (p^r - 1)(h-1)$. 

\begin{proposition}
Let $R$ be of type $A_n$ with $n\geq 3$ and assume that $p = 2h - 5$ is a prime. Let $\lambda = p(\omega_1+\omega_2+\cdots+\omega_{n-1})$. Then $\langle\lambda,\alpha_0^{\vee}\rangle \leq (p-1)(h-1)$ 
but $\St_1\otimes L(\lambda)$ does not have a good filtration.
\end{proposition}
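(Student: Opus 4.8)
The plan is to exhibit a specific dominant weight $\mu$ with $\Ext^1_G(\Delta(\mu),\St_1\otimes L(\lambda))\neq 0$ and invoke Theorem~\ref{cohcrit} to conclude that $\St_1\otimes L(\lambda)$ has no good filtration. By the isomorphism \eqref{ext1iso} it is equivalent to find $\mu$ (equivalently $\nu=-w_0(\mu)$) with $\Ext^1_{G/G_1}(k,\Hom_{G_1}(\St_1,\nabla(\nu)\otimes L(\lambda)))\neq 0$, and as in the proof of Proposition~\ref{largemu} this reduces to finding a composition factor $L(\sigma)$ of $\nabla(\nu)\otimes L(\lambda)$ with $\sigma=\sigma_0+p\sigma_1$, $\sigma_0=(p-1)\rho$, and $\Ext^1_G(k,L(\sigma_1))\neq 0$. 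So the genuine content is: (i) produce a nonzero $\Ext^1_G(k,L(\sigma_1))$ for a suitable small $\sigma_1$, and (ii) show that $L((p-1)\rho + p\sigma_1)$ actually occurs as a composition factor of $\nabla(\nu)\otimes L(\lambda)$ for an appropriate choice of $\nu\in X_+$.

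For step (i), the natural candidate is $\sigma_1 = (p-h+1)\alpha_0$: the remark following Proposition~\ref{summary} records that for this weight there is a short exact sequence $0\to L(\sigma_1)\to\nabla(\sigma_1)\to L(0)\to 0$ (via the Jantzen sum formula), which gives $\Ext^1_G(k,L(\sigma_1))\cong\Hom_G(L(0),Q)\neq 0$ where $Q=\nabla(\sigma_1)/L(\sigma_1)\cong L(0)$; one should check the connecting map is an isomorphism, i.e.\ that $\nabla(\sigma_1)$ is nonsplit, which is exactly what the sum formula provides. Since $p=2h-5$, we have $p-h+1 = h-4$, so $\sigma_1 = (h-4)\alpha_0$ and $\sigma = (p-1)\rho + p(h-4)\alpha_0$. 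For step (ii), with $\lambda = p(\omega_1+\cdots+\omega_{n-1})$ one has $L(\lambda)\cong L(0)\otimes L(\omega_1+\cdots+\omega_{n-1})^{(1)} = \nabla(\omega_1+\cdots+\omega_{n-1})^{(1)}$ (the fundamental weights are minuscule-type in type $A$, so these $\nabla$'s are simple), so $\St_1\otimes L(\lambda) = \St_1\otimes\nabla(\omega_1+\cdots+\omega_{n-1})^{(1)}$; by Proposition~\ref{reductiontosimple} this has a good filtration iff $\St_1$ does, which it trivially does — so the real work is that $L(\lambda)$ here is \emph{not} $L(\sigma_0)\otimes L(\sigma_1)^{(1)}$ form that forces the failure. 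Let me instead choose $\nu$ so that $\nabla(\nu)\otimes L(\lambda)$ visibly contains $L(\sigma)$: take $\nu$ with $\nu+\lambda \geq \sigma$ and $\langle\nu,\alpha_0^\vee\rangle$ as small as the inequalities of Theorem~\ref{mumellem} permit, then compute the relevant tensor-product multiplicity using STPT: $\nabla(\nu)\otimes L(\lambda)\cong \nabla(\nu)\otimes L(0)\otimes L(\omega_1+\cdots+\omega_{n-1})^{(1)}$, and one extracts the $(p-1)\rho$-isotypic part for $G_1$ to reduce to a statement about $\nabla(\nu)$ and Frobenius twists.

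The step I expect to be the main obstacle is (ii): verifying that $L(\sigma)$ with $\sigma = (p-1)\rho + p(h-4)\alpha_0$ genuinely appears as a composition factor of $\nabla(\nu)\otimes L(\lambda)$ for some $\nu\in X_+$, rather than merely being $\leq \nu+\lambda$. This requires a concrete multiplicity computation: writing $\nu = \nu_0 + p\nu_1$ and using STPT plus the good-filtration multiplicity formula (Brauer-type reciprocity / the fact that $[\nabla(\nu)\otimes L(\lambda) : L(\sigma)]$ can be computed from weight multiplicities of $\nabla$'s), one must show the coefficient is nonzero — this is where the type-$A_n$, $n\geq 3$ and $p = 2h-5$ hypotheses should be used, likely by choosing $\nu$ whose twisted part $\nu_1$ makes $\nabla(\nu_1)\otimes\nabla(\omega_1+\cdots+\omega_{n-1})$ contain $\nabla(\sigma_1)$, so that the nonsplit extension $0\to L(\sigma_1)\to\nabla(\sigma_1)\to L(0)\to 0$ propagates. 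One then assembles: nonvanishing of $\Ext^1_G(k,L(\sigma_1))$ plus occurrence of $L(\sigma)$ gives $\Ext^1_{G/G_1}(k,\Hom_{G_1}(\St_1,\nabla(\nu)\otimes L(\lambda)))\neq 0$ (using that a composition series of $\nabla(\nu)\otimes L(\lambda)$ filters the $\Hom_{G_1}(\St_1,-)$ and that this particular section contributes a nonsplit extension that cannot be cancelled, since $\St_1$-fixed parts of other sections have highest weights incomparable in the relevant range), hence $\Ext^1_G(\Delta(\mu),\St_1\otimes L(\lambda))\neq 0$ for $\mu = -w_0(\nu)$, contradicting the existence of a good filtration.
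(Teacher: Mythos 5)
Your proposal contains a fatal error and also leaves its central step unfinished. The error: you assert that for $\mu = \omega_1+\cdots+\omega_{n-1}$ one has $L(\mu) = \nabla(\mu)$ ``since the fundamental weights are minuscule-type in type $A$.'' Individual fundamental weights are minuscule in type $A$, but their sum is not, and $\nabla(\mu)$ is \emph{not} simple here --- indeed, if it were, then $\St_1\otimes L(\lambda) = \St_1\otimes\nabla(\mu)^{(1)} \cong \nabla((p-1)\rho+p\mu)$ would have a good filtration and the proposition would be \emph{false}. The entire content of the statement lives in the fact that $p = 2h-5$ is exactly the prime for which $\nabla(\mu)$ fails to be simple: one checks via Jantzen's criterion (Theorem~\ref{simpleweights}) that for $\alpha = \alpha_1+\cdots+\alpha_{n-1}$ one has $\langle\mu+\rho,\alpha^\vee\rangle = 2(n-1) = 2h-4 = p+1$, forcing the existence of a positive root $\beta_0$ with $\langle\mu+\rho,\beta_0^\vee\rangle = 1$ and $\alpha-\beta_0\in R\cup\{0\}$; the only candidate is $\beta_0=\alpha_n$, and $\alpha-\alpha_n$ is not a root, so simplicity fails. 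You never perform this computation, and your incorrect simplicity claim derails the argument at the point where you notice something is wrong and pivot to an unspecified choice of $\nu$.

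Beyond that, your overall strategy (hunting for a composition factor $L(\sigma)$ of $\nabla(\nu)\otimes L(\lambda)$ with $\sigma_0=(p-1)\rho$ and $\Ext^1_G(k,L(\sigma_1))\neq 0$) is far harder than necessary, and you explicitly concede that step (ii) --- actually exhibiting such a composition factor --- is ``the main obstacle'' without resolving it, so the proof is incomplete even setting aside the error. The paper's route is much shorter: since $\lambda = p\mu$ has restricted part zero, the Steinberg tensor product theorem gives $\St_1\otimes L(\lambda) \cong L((p-1)\rho+p\mu)$, a \emph{simple} module; a simple module has a good filtration only if it coincides with the corresponding induced module $\nabla((p-1)\rho+p\mu)\cong\St_1\otimes\nabla(\mu)^{(1)}$, and comparing the two forces $L(\mu)\cong\nabla(\mu)$, which the Jantzen criterion rules out. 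You should also note that the first assertion of the proposition, the inequality $\langle\lambda,\alpha_0^\vee\rangle\leq(p-1)(h-1)$, requires a short arithmetic verification that your proposal omits entirely.
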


\begin{proof}
Since $h\geq 4$ (recall that $h = n+1$) we have 
\begin{eqnarray*}
\langle\lambda,\alpha_0^{\vee}\rangle &=& p(n-1) = (2h - 5)(h-2) = 2h^2 - 9h + 10 = 2h^2 - 8h + 6 - (h-4)\\
&\leq& 2h^2 - 8h + 6 = (2h - 6)(h-1) = (p-1)(h-1)
\end{eqnarray*} 
which was the first part of the claim.

Let $\mu = \omega_1+\omega_2+\cdots+\omega_{n-1}$ (so $\lambda = p\mu$). We claim that $\nabla(\mu)\not\cong L(\mu)$. First apply Theorem \ref{simpleweights} with the positive root $\alpha = \alpha_1+\alpha_2+\cdots+\alpha_{n-1}$. One has  
$$\langle\mu+\rho,\alpha^{\vee}\rangle = (n-1) + (n-1) = 2h - 4 = 1 + p$$ 
so if we had $\nabla(\mu)\cong L(\mu)$ there would have to be a positive root $\beta_0$ with $\langle\mu+\rho,\beta_0^{\vee}\rangle = 1$ and $\alpha - \beta_0\in R\cup \{0\}$.

However, the only positive root $\beta_0$ such that $\langle\mu+\rho,\beta_0^{\vee}\rangle = 1$ is $\beta_0 = \alpha_n$ since $\mu+\rho$ is dominant and all other simple roots $\gamma$ have $\langle\mu+\rho,\gamma^{\vee}\rangle = 2$. Since 
$\alpha-\alpha_n$ is not a root (and $\alpha\neq\alpha_n$), this shows the claim.

Now one has  $\St_1\otimes L(\lambda) \cong \St_1\otimes L(\mu)^{(1)} \cong L((p-1)\rho + p\mu)$ by STPT. But since this is a simple module, the only way it can have a good filtration is if it is isomorphic to $\nabla((p-1)\rho + p\mu)$. But, 
by \cite[Proposition II.3.19]{rags}we have $\nabla((p-1)\rho + p\mu)\cong \St_1\otimes\nabla(\mu)^{(1)}$.  Since $L(\mu)^{(1)}$ is a submodule of $\nabla(\mu)^{(1)}$, it follows that if $\St_1\otimes L(\mu)^{(1)}\cong \St_1\otimes\nabla(\mu)^{(1)}$ then also $L(\mu)^{(1)}\cong \nabla(\mu)^{(1)}$, and thus $L(\mu)\cong \nabla(\mu)$, which is not the case.
\end{proof}

The existence of the aforementioned family of counterexamples means that if one wants to show that $\St_1\otimes L(\lambda)$ has a good filtration for all $\lambda\in X_1$ when $p$ is small, then the 
methods need to take into account more than just the size of $\langle\lambda,\alpha_0^{\vee}\rangle$.

\section{Good filtrations on  $\St_r\otimes \nabla^{(p,r)}(\lambda)$}

\subsection{} The goal of this section is to show that if $\lambda = \lambda_0 + p^r\lambda_1$ with $\lambda_0\in X_r$ and $\lambda_1\in X_+$ where $\lambda_1$ is ``large enough'' compared to $\lambda_0$ (and $p$ and $r$), then $\St_r\otimes \nabla^{(p,r)}(\lambda)$ has a good filtration, even if $\lambda_0$ is not small enough compared to $p$ and $r$ to 
apply Theorem~\ref{lambdaalpha0}. We start with some preliminary lemmas. The first lemma involves weights of $\nabla(\lambda)$ and the second provides conditions on 
when $R^1\rm{Ind}_B^G(\lambda) = 0$.

\begin{lemma}\label{weightsinsimple} Let $\lambda\in X_+$ and assume that $\mu$ is a weight of $\nabla(\lambda)$.
Then $\langle\mu,\alpha^{\vee}\rangle \geq -\langle\lambda,\alpha_0^{\vee}\rangle$ for all $\alpha\in R^+$. In particular, the same inequality holds for any weight of $L(\lambda)$.
\end{lemma}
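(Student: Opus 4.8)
\textbf{Proof plan for Lemma~\ref{weightsinsimple}.}

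The plan is to reduce to a statement about dominant weights using the Weyl group action on the weights of $\nabla(\lambda)$, and then exploit the fact that all weights of $\nabla(\lambda)$ are $\leq \lambda$. First recall that the set of weights of $\nabla(\lambda)$ is $W$-invariant (since characters are $W$-invariant), and every weight $\mu$ of $\nabla(\lambda)$ satisfies $\mu \leq \lambda$. Fix $\alpha\in R^+$. I want to bound $\langle\mu,\alpha^\vee\rangle$ from below. Pick $w\in W$ such that $w\mu$ is dominant; then $w\mu$ is again a weight of $\nabla(\lambda)$, so $w\mu \leq \lambda$, hence $w\mu \in X_+$ with $\langle w\mu,\alpha_0^\vee\rangle \leq \langle\lambda,\alpha_0^\vee\rangle$ by the observation recorded at the start of the subsection on weights of the form $w\cdot 0$ (a weight below another has no larger inner product with $\alpha_0^\vee$, which uses $\alpha_0^\vee$ dominant — here I instead use that $w\mu$ dominant pairs nonnegatively and is bounded above by $\lambda$).

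The key estimate is then: for a dominant weight $\nu$ and any root $\gamma$, one has $|\langle\nu,\gamma^\vee\rangle| \leq \langle\nu,\alpha_0^\vee\rangle$. Indeed $\gamma^\vee$ is a coroot, and in the dual root system $\alpha_0^\vee$ is the highest root (the highest short root of $R$ corresponds to the highest root of $R^\vee$), so $\alpha_0^\vee - \gamma^\vee$ is a nonnegative sum of simple coroots and likewise $\alpha_0^\vee + \gamma^\vee = \alpha_0^\vee - (-\gamma)^\vee$ is; pairing these with the dominant weight $\nu$ gives $\langle\nu,\gamma^\vee\rangle \leq \langle\nu,\alpha_0^\vee\rangle$ and $-\langle\nu,\gamma^\vee\rangle \leq \langle\nu,\alpha_0^\vee\rangle$. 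Applying this with $\nu = w\mu$ and $\gamma^\vee = w(\alpha^\vee)$ (which is a coroot since $W$ permutes coroots), I get
$$\langle\mu,\alpha^\vee\rangle = \langle w\mu, w(\alpha^\vee)\rangle \geq -\langle w\mu,\alpha_0^\vee\rangle \geq -\langle\lambda,\alpha_0^\vee\rangle,$$
which is exactly the claim. The final sentence of the lemma is immediate since every weight of $L(\lambda)$ is a weight of $\nabla(\lambda)$ (as $L(\lambda) = \soc_G\nabla(\lambda)$).

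I expect the main (minor) obstacle to be pinning down the statement that $\alpha_0^\vee$ is the highest root of the dual root system $R^\vee$, i.e. that for every coroot $\delta^\vee$ the difference $\alpha_0^\vee - \delta^\vee$ lies in the nonnegative span of the simple coroots — this is the coroot-side reformulation of $\alpha_0$ being the highest short root of $R$, and it is exactly what makes $\langle\rho,\beta^\vee\rangle$ the height of $\beta^\vee$ in the sense already used in the proof of Proposition~\ref{innerproductalpha0}(b). Everything else is a routine application of $W$-invariance of weight sets and the partial order $\leq$.
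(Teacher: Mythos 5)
Your proof is correct and takes essentially the same route as the paper's: conjugate $\mu$ into a closed Weyl chamber using the $W$-invariance of the weight set, use the extremality of $\alpha_0^\vee$ (as the highest root of $R^\vee$) to bound the pairing with an arbitrary coroot, and then compare with $\lambda$ via the partial order on weights. The only cosmetic difference is that you move $\mu$ into the dominant chamber and use $w\mu\leq\lambda$, whereas the paper moves it into the antidominant chamber and uses $w(\mu)\geq w_0(\lambda)$.
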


\begin{proof} Since the Weyl group acts transitively on the Weyl chambers, there is some $w\in W$ such that $\langle w(\mu),\alpha^{\vee}\rangle\leq 0$ for all $\alpha\in R^+$. Then 
$\langle w(\mu),\alpha^{\vee}\rangle \geq \langle w(\mu),\alpha_0^{\vee}\rangle$ for all $\alpha\in R$ for the same reason that the reverse inequality would hold if $w(\mu)$ was dominant.

For any $\alpha\in R^+$, $\langle\mu,\alpha^{\vee}\rangle = \langle w(\mu),w(\alpha^{\vee})\rangle\geq \langle w(\mu),\alpha_0^{\vee}\rangle$. Since $\mu$ is a weight of $\nabla(\lambda)$, so is $w(\mu)$.  Hence, 
$w(\mu)\geq w_0(\lambda)$ and $\langle w(\mu),\alpha_0^{\vee}\rangle \geq \langle w_0(\lambda),\alpha_0^{\vee}\rangle$. Furthermore, $\langle w_0(\lambda),\alpha_0^{\vee}\rangle = -\langle\lambda,\alpha_0^{\vee}\rangle$ which gives the first claim.
If $\mu$ is a weight of $L(\lambda)$ then $\mu$ is also a weight of $\nabla(\lambda)$, thus the second claim follows.
\end{proof}

\begin{lemma}\label{indvanish}
Let $\lambda\in X$. If $\langle\lambda,\alpha^{\vee}\rangle \geq -1$ for all $\alpha\in S$ then $R^1\rm{Ind}_B^G(\lambda) = 0$.
\end{lemma}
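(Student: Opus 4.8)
The statement is a classical fact about the vanishing of the first higher derived functor of induction from $B$ to $G$. The plan is to reduce the question to a rank-one computation using a standard resolution/filtration argument, and to invoke the known cohomology of line bundles on $\mathbb{P}^1$ (equivalently, on $G_\alpha/B_\alpha \cong SL_2/B_{SL_2}$). First I would recall that for $SL_2$, if $n \geq -1$ then $R^1\operatorname{Ind}_{B_{SL_2}}^{SL_2}(n) = 0$; this is the well-known computation that $H^1$ of the line bundle $\mathcal{O}(n)$ on $\mathbb{P}^1$ vanishes for $n \geq -1$, while $R^0$ gives the Weyl module in degrees $n \geq 0$ (see \cite[II.5.2]{rags}). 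The hypothesis $\langle\lambda,\alpha^\vee\rangle \geq -1$ for all simple $\alpha\in S$ is precisely the condition that, along each simple direction, we are in this non-negative-or-$(-1)$ range.

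The key step is then a minimal parabolic / transitivity argument. For each simple root $\alpha$, let $P_\alpha$ be the corresponding minimal parabolic containing $B$. One has the transitivity of induction $\operatorname{Ind}_B^G = \operatorname{Ind}_{P_\alpha}^G \circ \operatorname{Ind}_B^{P_\alpha}$, giving a Grothendieck spectral sequence
\[
R^i\operatorname{Ind}_{P_\alpha}^G\bigl(R^j\operatorname{Ind}_B^{P_\alpha}(\lambda)\bigr) \Rightarrow R^{i+j}\operatorname{Ind}_B^G(\lambda).
\]
Since $\langle\lambda,\alpha^\vee\rangle \geq -1$, the $SL_2$-computation applied inside $P_\alpha$ (whose Levi is generated by the $\alpha$-root subgroups) shows $R^1\operatorname{Ind}_B^{P_\alpha}(\lambda) = 0$, so $R^j\operatorname{Ind}_B^{P_\alpha}(\lambda) = 0$ for $j\geq 1$ (higher $R^j$ vanish for rank-one reasons as well). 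Hence the spectral sequence degenerates enough to give $R^1\operatorname{Ind}_B^G(\lambda) \cong R^1\operatorname{Ind}_{P_\alpha}^G(\operatorname{Ind}_B^{P_\alpha}(\lambda))$. The standard route (this is exactly the argument in \cite[II.5.15, II.5.4]{rags}) is to now feed this into Kempf-type vanishing: one shows that $R^1\operatorname{Ind}_B^G(\lambda)$, viewed via a good filtration of $G/P_\alpha$, is controlled by whether $\lambda$ fails to be dominant, and the only obstruction comes from simple reflections $s_\alpha$ with $\langle\lambda,\alpha^\vee\rangle = -1$, which contribute nothing.

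Concretely, I would structure the proof as follows: (1) if $\lambda$ is dominant the result is Kempf vanishing \cite[II.4.5]{rags}, so assume not; (2) pick $\alpha\in S$ with $\langle\lambda,\alpha^\vee\rangle < 0$, hence $= -1$ by hypothesis; (3) apply $s_\alpha\cdot$ — note $s_\alpha\cdot\lambda = s_\alpha(\lambda+\rho)-\rho = \lambda$ in this case since $\langle\lambda+\rho,\alpha^\vee\rangle = 0$ — wait, more carefully, use the standard exact sequence relating $\operatorname{Ind}_B^{P_\alpha}(\lambda)$ to $0$ when $\langle\lambda,\alpha^\vee\rangle = -1$, namely $\operatorname{Ind}_B^{P_\alpha}(\lambda) = 0$ and $R^1\operatorname{Ind}_B^{P_\alpha}(\lambda) = 0$ in that degenerate case; (4) conclude via transitivity that $R^1\operatorname{Ind}_B^G(\lambda) = R^1\operatorname{Ind}_{P_\alpha}^G(0) = 0$. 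The cleanest version is simply: when $\langle\lambda,\alpha^\vee\rangle = -1$ one has $\operatorname{Ind}_B^{P_\alpha}(\lambda) = 0$ entirely (the line bundle $\mathcal{O}(-1)$ on $\mathbb{P}^1$ is acyclic), so $R^n\operatorname{Ind}_B^G(\lambda) = 0$ for all $n$ by transitivity, which is even stronger; and when $\langle\lambda,\alpha^\vee\rangle \geq 0$ for all $\alpha\in S$, i.e. $\lambda\in X_+$, apply Kempf vanishing directly.

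**Main obstacle.** The only real subtlety is bookkeeping with the Grothendieck spectral sequence for the composite functor and making sure the rank-one vanishing $R^j\operatorname{Ind}_B^{P_\alpha}(\lambda) = 0$ for $j \geq 1$ is correctly invoked (this uses that $P_\alpha/B \cong \mathbb{P}^1$ and the structure of $P_\alpha$ as having a rank-one reductive quotient acting on the relevant line bundle). Once that is in place the argument is a short case split on whether $\lambda$ is dominant. I expect the write-up to mostly be citations to \cite{rags} (Chapter II, Sections 4 and 5) assembled in the right order, rather than new computation.
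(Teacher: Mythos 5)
Your proposal is correct and is essentially the paper's proof: the paper simply cites Kempf vanishing (\cite[Proposition II.4.5]{rags}) for the case $\lambda\in X_+$ and \cite[Proposition II.5.4(a)]{rags} for the case where some $\langle\lambda,\alpha^{\vee}\rangle=-1$, and your case split together with the transitivity-through-$P_\alpha$ argument (with $\operatorname{Ind}_B^{P_\alpha}(\lambda)$ acyclic when $\langle\lambda,\alpha^{\vee}\rangle=-1$) is exactly the content of those two citations.
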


\begin{proof}
This follows by combining \cite[Proposition II.4.5]{rags} and \cite[Proposition II.5.4(a)]{rags}.
\end{proof}

\subsection{} The lemmas in the preceding section can be used to show when $\nabla(\nu)\otimes L(\lambda)$ has a good filtration. 

\begin{proposition}\label{bignu}
Let $\lambda,\nu\in X_+$ with $\langle\lambda,\alpha_0^{\vee}\rangle \leq \langle\nu,\alpha^{\vee}\rangle + 1$ for all $\alpha\in S$.
Then $\nabla(\nu)\otimes L(\lambda)$ has a good filtration.
\end{proposition}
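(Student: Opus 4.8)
The goal is to prove $\nabla(\nu)\otimes L(\lambda)$ has a good filtration under the hypothesis $\langle\lambda,\alpha_0^\vee\rangle \leq \langle\nu,\alpha^\vee\rangle + 1$ for all $\alpha\in S$. By Theorem~\ref{cohcrit}, it suffices to show $\Ext^1_G(\Delta(\mu),\nabla(\nu)\otimes L(\lambda)) = 0$ for all $\mu\in X_+$; equivalently, after dualizing, we want to control $\Ext^1_G$ between a Weyl module and the tensor product. The natural strategy is to realize $\nabla(\nu)\otimes L(\lambda)$ as $\operatorname{Ind}_B^G$ of something and use a cohomology-vanishing argument; specifically, since $\nabla(\nu) = \operatorname{Ind}_B^G(\nu)$, the tensor identity gives $\nabla(\nu)\otimes L(\lambda) \cong \operatorname{Ind}_B^G(\nu\otimes L(\lambda)|_B)$. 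Filtering $L(\lambda)|_B$ by its $B$-weights $\mu'$ (each a weight of $L(\lambda)$, so with $\langle\mu',\alpha^\vee\rangle \geq -\langle\lambda,\alpha_0^\vee\rangle$ for all $\alpha\in R^+$ by Lemma~\ref{weightsinsimple}), we get that $\nabla(\nu)\otimes L(\lambda)$ has a filtration whose sections are the $\operatorname{Ind}_B^G(\nu + \mu')$, \emph{provided} all the higher derived functors vanish so that $\operatorname{Ind}_B^G$ is exact on this $B$-filtration.

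The key step is therefore the vanishing of $R^1\operatorname{Ind}_B^G(\nu + \mu')$ for every weight $\mu'$ of $L(\lambda)$. By Lemma~\ref{indvanish}, it is enough to check that $\langle\nu + \mu',\alpha^\vee\rangle \geq -1$ for every simple root $\alpha\in S$. Using Lemma~\ref{weightsinsimple} with $\alpha\in S \subseteq R^+$, we have $\langle\mu',\alpha^\vee\rangle \geq -\langle\lambda,\alpha_0^\vee\rangle$, so $\langle\nu + \mu',\alpha^\vee\rangle \geq \langle\nu,\alpha^\vee\rangle - \langle\lambda,\alpha_0^\vee\rangle \geq -1$, where the last inequality is exactly the hypothesis. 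This gives $R^1\operatorname{Ind}_B^G(\nu+\mu') = 0$ for all the relevant weights, and hence — since higher derived functors of $\operatorname{Ind}_B^G$ from $B$ to $G$ beyond degree $\dim G/B$ vanish and, more relevantly, $R^1$ vanishing on all sections of a $B$-filtration forces $\operatorname{Ind}_B^G$ to take that filtration to a good filtration — each section $\operatorname{Ind}_B^G(\nu+\mu') = \nabla(\nu+\mu')$ when $\nu+\mu'$ is dominant, and is $0$ otherwise (again by the $R^i$-vanishing and the standard fact that $\operatorname{Ind}_B^G(\sigma)$ with $\langle\sigma,\alpha^\vee\rangle \geq -1$ for all $\alpha\in S$ is either a dual Weyl module or zero). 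Thus $\nabla(\nu)\otimes L(\lambda)$ acquires a good filtration.

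To make the filtration argument precise I would proceed by induction on $\dim L(\lambda)$: pick a $B$-submodule $k_{\mu'}\hookrightarrow L(\lambda)|_B$ with $\mu'$ a \emph{lowest} weight (so the quotient is again a $B$-module all of whose weights satisfy the Lemma~\ref{weightsinsimple} bound), apply $\operatorname{Ind}_B^G$ to the short exact sequence $0 \to k_{\mu'} \to L(\lambda)|_B \to L(\lambda)/k_{\mu'} \to 0$, tensor by the fact that $\operatorname{Ind}$ commutes with tensoring by the $G$-module $\nabla(\nu)$ only after using the tensor identity at the start — so really one runs the induction on the short exact sequence of $B$-modules $\nu\otimes k_{\mu'} \to \nu\otimes L(\lambda)|_B \to \nu\otimes (L(\lambda)/k_{\mu'})$, where the long exact sequence degenerates because $R^1\operatorname{Ind}_B^G(\nu+\mu') = 0$. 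The outer terms then have good filtrations (the first is $\nabla(\nu+\mu')$ or $0$; the second by induction), and a module that is an extension of good-filtration modules has a good filtration by Theorem~\ref{cohcrit}.

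\textbf{Main obstacle.} The genuinely delicate point is not the numerology — that falls out immediately from combining Lemmas~\ref{weightsinsimple} and~\ref{indvanish} with the hypothesis — but rather justifying cleanly that $R^1$-vanishing on \emph{every} $B$-section is enough to conclude, and identifying $\operatorname{Ind}_B^G(\nu+\mu')$ correctly (it is $\nabla(\nu+\mu')$ when $\nu+\mu' \in X_+$ and $0$ otherwise, which needs the standard Kempf-type vanishing together with the condition $\langle\nu+\mu',\alpha^\vee\rangle\geq -1$ to rule out the intermediate cases). Once that bookkeeping with the long exact sequence for $\operatorname{Ind}_B^G$ is set up, the result follows; I expect the write-up to lean on \cite[II.4.5, II.5.4]{rags} for these facts.
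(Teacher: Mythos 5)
Your proposal is correct and follows essentially the same route as the paper's proof: apply the tensor identity to write $\nabla(\nu)\otimes L(\lambda)\cong \operatorname{Ind}_B^G(\nu\otimes L(\lambda))$, filter $\nu\otimes L(\lambda)$ by $B$-weights, and use Lemma~\ref{weightsinsimple} together with the hypothesis to verify $\langle\nu+\mu,\alpha^{\vee}\rangle\geq -1$ so that Lemma~\ref{indvanish} kills $R^1\operatorname{Ind}_B^G$ on every section. The extra care you take with the induction on the $B$-filtration and with identifying each $\operatorname{Ind}_B^G(\nu+\mu)$ as a dual Weyl module or zero is exactly the bookkeeping the paper leaves implicit.
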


\begin{proof}
First note that by Lemma \ref{weightsinsimple}, for any weight $\mu$ of $L(\lambda)$ and any $\alpha\in S$, 
$$\langle\nu+\mu,\alpha^{\vee}\rangle = \langle\nu,\alpha^{\vee}\rangle + \langle\mu,\alpha^{\vee}\rangle \geq \langle\nu,\alpha^{\vee}\rangle - \langle\lambda,\alpha_0^{\vee}\rangle \geq -1.$$
Now apply the tensor identity (\cite[Proposition 3.6]{rags}) which gives 
$$\nabla(\nu)\otimes L(\lambda) = \rm{Ind}_B^G(\nu)\otimes L(\lambda) \cong \rm{Ind}_B^G(\nu \otimes L(\lambda)).$$ 
The weights of $L(\lambda)$ gives a filtration of $L(\lambda)$ as a $B$-module, so we obtain a filtration of $\nu\otimes L(\lambda)$ with factors of the form $\nu + \mu$ where 
$\mu$ is a weight of $L(\lambda)$.

We wish to show that this filtration gives a filtration of $\rm{Ind}_B^G(\nu\otimes L(\lambda))$ with terms of the form $\rm{Ind}_B^G(\nu + \mu)$. In order to do this, it is sufficient to show that 
$R^1\rm{Ind}_B^G(\nu + \mu) = 0$ for all weights $\mu$ of $L(\lambda)$. But, for any such $\mu$ and $\alpha\in S$ one has
 $\langle\nu+\mu,\alpha^{\vee}\rangle \geq -1$, so this follows by Lemma \ref{indvanish}. Therefore, we have demonstrated that $\nabla(\nu)\otimes L(\lambda)$ has a filtration with factors of the form 
 $\rm{Ind}_B^G(\gamma)$ for suitable $\gamma$ which finishes the proof.
\end{proof}

As a direct consequence of the above, we get a sufficient condition on $\lambda$ which guarantees that $\St_r\otimes L(\lambda)$ has a good filtration, with no requirement on $p$. For $p=h$ this condition is better than the one obtained from Theorem \ref{lambdaalpha0}.

\begin{theorem}\label{smallp}
If $\lambda\in X_+$ with $\langle\lambda,\alpha_0^{\vee}\rangle\leq p^r$ then $\St_r\otimes L(\lambda)$ has a good filtration.
\end{theorem}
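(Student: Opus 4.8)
The plan is to derive Theorem~\ref{smallp} as a direct application of Proposition~\ref{bignu}, using $\St_r = \nabla((p^r-1)\rho)$. First I would recall that $\St_r = L((p^r-1)\rho) \cong \nabla((p^r-1)\rho)$, so that $\St_r \otimes L(\lambda) = \nabla((p^r-1)\rho) \otimes L(\lambda)$, which puts us exactly in the setting of Proposition~\ref{bignu} with $\nu = (p^r-1)\rho$.

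Next I would check the hypothesis of Proposition~\ref{bignu}, namely that $\langle \lambda, \alpha_0^\vee \rangle \le \langle \nu, \alpha^\vee\rangle + 1$ for all $\alpha \in S$. Since $\nu = (p^r-1)\rho$ and $\langle \rho, \alpha^\vee \rangle = 1$ for every simple root $\alpha$, we get $\langle \nu, \alpha^\vee \rangle = p^r - 1$, so the required inequality becomes $\langle \lambda, \alpha_0^\vee \rangle \le (p^r-1) + 1 = p^r$, which is precisely the hypothesis of the theorem. Then Proposition~\ref{bignu} immediately gives that $\nabla((p^r-1)\rho) \otimes L(\lambda) = \St_r \otimes L(\lambda)$ has a good filtration, finishing the proof.

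There is essentially no obstacle here — the whole content has been front-loaded into Proposition~\ref{bignu} (and ultimately into the tensor identity plus the vanishing of $R^1\mathrm{Ind}_B^G$ via Lemmas~\ref{weightsinsimple} and~\ref{indvanish}). The only thing to be mildly careful about is making sure that the characteristic-free identification $\St_r \cong \nabla((p^r-1)\rho)$ is invoked, since this is what lets us apply Proposition~\ref{bignu} with no hypothesis on $p$. So the "proof" will be just a couple of lines: substitute $\nu = (p^r-1)\rho$ into Proposition~\ref{bignu} and observe that the weight condition reduces to $\langle\lambda,\alpha_0^\vee\rangle \le p^r$.

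\begin{proof}
Recall that $\St_r = L((p^r-1)\rho) \cong \nabla((p^r-1)\rho)$, so $\St_r\otimes L(\lambda) \cong \nabla((p^r-1)\rho)\otimes L(\lambda)$. Set $\nu = (p^r-1)\rho$. For each simple root $\alpha\in S$ we have $\langle\nu,\alpha^{\vee}\rangle = (p^r-1)\langle\rho,\alpha^{\vee}\rangle = p^r - 1$, so the hypothesis $\langle\lambda,\alpha_0^{\vee}\rangle\leq p^r$ gives $\langle\lambda,\alpha_0^{\vee}\rangle \leq (p^r-1) + 1 = \langle\nu,\alpha^{\vee}\rangle + 1$ for all $\alpha\in S$. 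Thus Proposition~\ref{bignu} applies and shows that $\nabla(\nu)\otimes L(\lambda) \cong \St_r\otimes L(\lambda)$ has a good filtration.
\end{proof}
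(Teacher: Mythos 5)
Your proof is correct and follows exactly the same route as the paper: the paper's own proof is the one-liner ``this follows directly from Proposition~\ref{bignu} since $\langle(p^r-1)\rho,\alpha^{\vee}\rangle = p^r-1$ for all $\alpha\in S$.'' Your version just spells out the substitution $\nu=(p^r-1)\rho$ and the identification $\St_r\cong\nabla((p^r-1)\rho)$ a bit more explicitly.
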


\begin{proof}
This follows directly from Proposition \ref{bignu} since $\langle(p^r-1)\rho,\alpha^{\vee}\rangle = p^r - 1$ for all $\alpha\in S$.
\end{proof}

\subsection{} We now present sufficient conditions to insure that $\St_r\otimes\nabla^{(p,r)}(\lambda)$ has a good filtration. 

\begin{theorem}\label{storlambda1}
Let $\lambda$ be a dominant weight and write $\lambda = \lambda_0 + p^r\lambda_1$ with $\lambda_0\in X_r$. Moreover, assume that 
$\langle\lambda_0,\alpha_0^{\vee}\rangle \leq p^r(\langle\lambda_1,\alpha^{\vee}\rangle + 1)$ for all $\alpha\in S$. Then $\St_r\otimes\nabla^{(p,r)}(\lambda)$ has a good filtration.
\end{theorem}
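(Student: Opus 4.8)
The plan is to reduce the statement about $\St_r\otimes\nabla^{(p,r)}(\lambda)$ to one about $\St_r\otimes L(\lambda_0)$ by peeling off the Frobenius-twisted factor. Recall $\nabla^{(p,r)}(\lambda) = L(\lambda_0)\otimes\nabla(\lambda_1)^{(r)}$, so
\[
\St_r\otimes\nabla^{(p,r)}(\lambda) \;\cong\; \bigl(\St_r\otimes L(\lambda_0)\bigr)\otimes\nabla(\lambda_1)^{(r)}.
\]
By Proposition~\ref{reductiontosimple} (applied with $V = \St_r\otimes L(\lambda_0)$, or rather with $V$ chosen so that $\St_r\otimes V \cong \St_r\otimes L(\lambda_0)$ up to the twist), it would suffice to prove that $\St_r\otimes L(\lambda_0)$ has a good filtration. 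However, the whole point of the theorem is that $\lambda_0$ need \emph{not} be small enough for Theorem~\ref{lambdaalpha0} to apply, so this naive reduction is exactly what we cannot do; the hypothesis involves $\lambda_1$, so $\lambda_1$ must be used essentially, not discarded.

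Instead I would keep $\nabla(\lambda_1)^{(r)}$ in play and argue directly with Proposition~\ref{bignu}. First write $\St_r\otimes\nabla^{(p,r)}(\lambda)\cong \St_r\otimes L(\lambda_0)\otimes\nabla(\lambda_1)^{(r)}$, and use $\St_r\otimes\nabla(\lambda_1)^{(r)}\cong\nabla((p^r-1)\rho + p^r\lambda_1)$ by \cite[Proposition II.3.19]{rags}. Thus
\[
\St_r\otimes\nabla^{(p,r)}(\lambda) \;\cong\; \nabla\bigl((p^r-1)\rho + p^r\lambda_1\bigr)\otimes L(\lambda_0).
\]
Now apply Proposition~\ref{bignu} with $\nu = (p^r-1)\rho + p^r\lambda_1$ and the simple module $L(\lambda_0)$: the required hypothesis is $\langle\lambda_0,\alpha_0^{\vee}\rangle \leq \langle\nu,\alpha^{\vee}\rangle + 1$ for all $\alpha\in S$, and since $\langle\nu,\alpha^{\vee}\rangle = (p^r - 1) + p^r\langle\lambda_1,\alpha^{\vee}\rangle$, this reads $\langle\lambda_0,\alpha_0^{\vee}\rangle \leq p^r\langle\lambda_1,\alpha^{\vee}\rangle + p^r = p^r(\langle\lambda_1,\alpha^{\vee}\rangle + 1)$, which is precisely the hypothesis of the theorem. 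Hence Proposition~\ref{bignu} yields that $\nabla(\nu)\otimes L(\lambda_0)$ has a good filtration, completing the proof.

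The one point requiring a little care — though it should be routine — is the legitimacy of the isomorphism $\St_r\otimes\nabla^{(p,r)}(\lambda)\cong\nabla(\nu)\otimes L(\lambda_0)$: one needs associativity/commutativity of tensor product together with $\St_r\otimes\nabla(\lambda_1)^{(r)}\cong\St_1$-type statement for general $r$, namely $\St_r\otimes\nabla(\lambda_1)^{(r)}\cong\nabla((p^r-1)\rho + p^r\lambda_1)$, which is exactly \cite[Proposition II.3.19]{rags} and is already invoked elsewhere in this paper. I do not anticipate a genuine obstacle here; the main ``work'' was already done in Proposition~\ref{bignu}, and this theorem is essentially its packaging in the twisted setting. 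An alternative, slightly more conceptual route would be to apply Theorem~\ref{smallp}-style reasoning after using Proposition~\ref{reductiontosimple}, but the direct computation above is cleaner and makes transparent why the bound on $\langle\lambda_0,\alpha_0^\vee\rangle$ scales with $p^r\langle\lambda_1,\alpha^\vee\rangle$.
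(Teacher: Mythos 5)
Your proof is correct and is essentially identical to the paper's: both rewrite $\St_r\otimes\nabla^{(p,r)}(\lambda)$ as $\nabla((p^r-1)\rho+p^r\lambda_1)\otimes L(\lambda_0)$ via \cite[Proposition II.3.19]{rags} and then apply Proposition~\ref{bignu}, with the hypothesis matching exactly. The isomorphism you flag as needing care is precisely the cited Jantzen result, so there is no gap.
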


\begin{proof}
By \cite[Proposition II.3.19]{rags}
$$\St_r\otimes \nabla^{(p,r)}(\lambda) = \St_r\otimes\nabla(\lambda_1)^{(r)}\otimes L(\lambda_0) \cong \nabla((p^r - 1)\rho + p^r\lambda_1)\otimes L(\lambda_0)$$ so the claim follows from Proposition \ref{bignu} since $\langle (p^r-1)\rho + p^r\lambda_1,\alpha^{\vee}\rangle = p^r\langle\lambda_1,\alpha^{\vee}\rangle + p^r-1$ for any $\alpha\in S$.
\end{proof}

As a special case of above theorem, we see that if $p^r(\langle\lambda_1,\alpha^{\vee}\rangle + 1) \geq (p^r-1)(h-1)$ for all $\alpha\in S$, then for any $\lambda = \lambda_0 + p^r\lambda_1$ with $\lambda_0\in X_r$, 
$\St_r \otimes \nabla^{(p,r)}(\lambda)$ has a good filtration.

\section{Root systems of small rank}

\subsection{} For the root systems of type $A_2$, $A_3$, $B_2$ and $G_2$ we can show that $\St_r\otimes M$ has a good filtration for any $G$-module $M$ with a good $(p,r)$-filtration, without any restrictions on $p$, except for the case $p=7$ in type $G_2$.
This will be accomplished by proving that $\St_1\otimes L(\lambda)$ has a good filtration for all $\lambda\in X_1$. The statement will then follow from Proposition \ref{induction} and Proposition \ref{reductiontosimple}.

In the following, we will call a weight $\lambda\in X_+$ {\em simple} if $L(\lambda) = \nabla(\lambda)$. We start with a result similar to Lemma \ref{smallmu} and Proposition \ref{largemu}. We will not give a proof here, as the arguments 
are completely identical to those of the mentioned results.

\begin{proposition}\label{smallrank} Let $\lambda\in X_1$ and assume that $\St_1\otimes L(\lambda)$ does not have a good filtration.
Then there are weights $\mu,\sigma\in X_+$ with $\mu\neq\lambda$ such that 
\begin{itemize}
\item[(a)] $\Ext_G^1(k,L(\sigma))\neq 0$, 
\item[(b)] $[\nabla(\lambda):L(\mu)]\neq 0$ and 
\item[(c)] $p\sigma \leq \lambda + \mu$.
\end{itemize} 
In particular, $\lambda$ is not simple, $p\sigma \leq 2\lambda$, $p\langle\sigma,\alpha_0^{\vee}\rangle \leq 2\langle\lambda,\alpha_0^{\vee}\rangle$, and $p\langle\sigma,\alpha_0^{\vee}\rangle \leq \langle\lambda+\mu,\alpha_0^{\vee}\rangle$.
\end{proposition}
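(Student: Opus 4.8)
The statement of Proposition~\ref{smallrank} is explicitly claimed to follow by arguments "completely identical" to those of Lemma~\ref{smallmu} and Proposition~\ref{largemu}, specialized to $r=1$ and $\lambda\in X_1$, so the proposal is to retrace those two arguments and extract the stated consequences. First I would set up the short exact sequence $0\to L(\lambda)\to\nabla(\lambda)\to Q\to 0$, tensor with $\St_1$, and apply $\Hom_G(\Delta(\mu),-)$. As in Lemma~\ref{smallmu}, since $\St_1\otimes\nabla(\lambda)$ has a good filtration (Theorem~\ref{tensorproductgoodfilt}), the vanishing $\Ext_G^1(\Delta(\mu),\St_1\otimes\nabla(\lambda))=0$ holds for all $\mu$ by Theorem~\ref{cohcrit}; hence if $\St_1\otimes L(\lambda)$ fails to have a good filtration there is a $\mu\in X_+$ with $\Ext_G^1(\Delta(\mu),\St_1\otimes L(\lambda))\neq0$, forcing $\Hom_G(\Delta(\mu),\St_1\otimes Q)\neq0$. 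Since $\head\Delta(\mu)=L(\mu)$, this makes $L(\mu)$ a composition factor of $\St_1\otimes Q$, in particular a composition factor of $\St_1\otimes\nabla(\lambda)$; noting that $L(\mu)$ being a factor of $\St_1\otimes Q$ (as opposed to all of $\St_1\otimes\nabla(\lambda)$) means $\mu\neq\lambda$ can be arranged, and that $\mu$ must be a weight of $\nabla(\lambda)$ after removing the Steinberg twist — more precisely, one argues as in Proposition~\ref{largemu} that $\mu$ itself is a composition factor label of $\nabla(\lambda)$, giving (b) $[\nabla(\lambda):L(\mu)]\neq0$.

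Next, following the proof of Proposition~\ref{largemu} with $r=1$: from the isomorphism (\ref{ext1iso}) and $\St_1\cong\St_1^*$, $\Delta(\mu)^*\cong\nabla(-w_0\mu)$, one rewrites the nonvanishing $\Ext^1$ as $\Ext_{G/G_1}^1(k,\Hom_{G_1}(\St_1,\nabla(\nu)\otimes L(\lambda)))\neq0$ with $\nu=-w_0(\mu)$. Taking a composition series of $\nabla(\nu)\otimes L(\lambda)$ and using that $\St_1$ is $G_1$-projective, some composition factor $L(\sigma)$ of $\nabla(\nu)\otimes L(\lambda)$ has $\Ext_{G/G_1}^1(k,\Hom_{G_1}(\St_1,L(\sigma)))\neq0$; STPT and simplicity of $\St_1$ as a $G_1$-module force $\sigma_0=(p-1)\rho$ and $\Hom_{G_1}(\St_1,L(\sigma))\cong L(\sigma_1)^{(1)}$, whence $\Ext_G^1(k,L(\sigma_1))\neq0$. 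This is (a) after noting the corresponding claim is being stated for $\sigma$ (with (a) as $\Ext^1_G(k,L(\sigma))\neq 0$ — one should actually state it for $\sigma_1$, or equivalently record it as written with the understanding that $\Ext^1_G(k,L(\sigma))\neq 0$ follows from $\Ext^1_G(k,L(\sigma_1))\neq 0$ via the Frobenius twist and the Lyndon–Hochschild–Serre collapse, since $\Hom_{G_1}(\St_1,\St_1\otimes L(\sigma_1)^{(1)})\cong L(\sigma_1)^{(1)}$). Since $L(\sigma)$ is a composition factor of $\nabla(\nu)\otimes L(\lambda)$, we get $\sigma\leq\nu+\lambda$; combining with $\nu\leq\mu\leq\lambda$ (as $\mu$ is a weight of $\nabla(\lambda)$, hence $\langle\mu,\alpha_0^\vee\rangle\leq\langle\lambda,\alpha_0^\vee\rangle$, and similarly $\langle\nu,\alpha_0^\vee\rangle=\langle\mu,\alpha_0^\vee\rangle$) yields $p\sigma\leq\nu+\lambda\leq\lambda+\mu$, which is (c), and then pairing with $\alpha_0^\vee$ gives $p\langle\sigma,\alpha_0^\vee\rangle\leq\langle\lambda+\mu,\alpha_0^\vee\rangle\leq2\langle\lambda,\alpha_0^\vee\rangle$ and $p\sigma\leq2\lambda$. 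Finally, $\lambda$ is not simple because if $L(\lambda)=\nabla(\lambda)$ then $Q=0$ and the above argument produces no obstruction.

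\textbf{Main obstacle.} The delicate point is bookkeeping the passage between $\mu$, $\nu=-w_0(\mu)$, and $\sigma$ versus $\sigma_1$, and making sure the inequalities $\langle\mu,\alpha_0^\vee\rangle=\langle\nu,\alpha_0^\vee\rangle\leq\langle\lambda,\alpha_0^\vee\rangle$ are justified (the first via $-w_0(\alpha_0^\vee)=\alpha_0^\vee$, the second via $\mu$ being a weight of $\nabla(\lambda)$ and the monotonicity of $\langle-,\alpha_0^\vee\rangle$ under $\leq$ noted at the start of Section~2.2). One must also be careful that "$\mu\neq\lambda$" genuinely can be guaranteed — this uses that $\Hom_G(\Delta(\mu),\St_1\otimes Q)\neq0$ with $Q$ a proper quotient of $\nabla(\lambda)$, so the highest weight $\lambda$ of $\nabla(\lambda)$ does not occur, and the label $\mu$ coming from $\St_1\otimes Q$ corresponds to a composition factor of $\nabla(\lambda)$ strictly below $\lambda$. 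Apart from this, everything is a verbatim specialization of the $r=1$ case of the cited results, so no new ideas are required.
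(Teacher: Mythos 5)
Your outline correctly reproduces the skeleton of Lemma \ref{smallmu} and Proposition \ref{largemu}, but the step where you extract the weight $\mu$ of condition (b) is wrong, and the resulting inequality chain does not close. You take $\mu$ to be the weight with $\Ext_G^1(\Delta(\mu),\St_1\otimes L(\lambda))\neq 0$ and assert that ``$\mu$ itself is a composition factor label of $\nabla(\lambda)$''. That is false: $\Hom_G(\Delta(\mu),\St_1\otimes Q)\neq 0$ only shows that $L(\mu)$ is a composition factor of $\St_1\otimes L(\mu')$ for some composition factor $L(\mu')$ of $Q=\nabla(\lambda)/L(\lambda)$, so all one gets is $\mu\leq(p-1)\rho+\mu'$ with $[\nabla(\lambda):L(\mu')]\neq 0$ and $\mu'\neq\lambda$. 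The Ext-weight $\mu$ is typically of size about $(p-1)\rho+\mu'$ and is not a weight of $\nabla(\lambda)$ at all; for instance in type $A_3$ with $p=3$ and $\lambda=(1,2,1)$ one has $Q=L(0,2,0)$, and the composition factors of $\St_1\otimes L(0,2,0)$ have highest weights up to $(2,4,2)\not\leq\lambda$. The weight appearing in condition (b) must be this $\mu'$, a third weight distinct from both the Ext-weight and $\sigma$, and your argument never introduces it.

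Because of this conflation your final chain $p\sigma\leq\nu+\lambda\leq\lambda+\mu$ fails twice over. First, the argument of Proposition \ref{largemu} produces a composition factor $L(\sigma')$ of $\nabla(\nu)\otimes L(\lambda)$ with $\sigma'=(p-1)\rho+p\sigma$, so the correct inequality is $(p-1)\rho+p\sigma\leq\nu+\lambda$, not $p\sigma\leq\nu+\lambda$; the $(p-1)\rho$ must be carried along and cancelled against the $(p-1)\rho$ occurring in the bound $\nu=-w_0(\tilde{\mu})\leq(p-1)\rho+(-w_0\mu')$ coming from the previous paragraph. Second, your justification of ``$\nu\leq\mu$'' via $\langle\nu,\alpha_0^\vee\rangle=\langle\mu,\alpha_0^\vee\rangle$ is invalid: equality of pairings with $\alpha_0^\vee$ does not imply the partial order $\leq$, and in any case $\nu$ has size roughly $(p-1)\rho+\lambda$, far above any weight of $\nabla(\lambda)$. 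Assembling the pieces correctly yields $p\sigma\leq\lambda+(-w_0\mu')$ with $[\nabla(\lambda):L(\mu')]\neq 0$ and $\mu'\neq\lambda$; to land exactly on the stated $p\sigma\leq\lambda+\mu$ one must still address the $-w_0$ (harmless in the paper's applications, where either $-w_0$ acts trivially or only $\langle\cdot,\alpha_0^\vee\rangle$ is used, but invisible in your write-up because the bookkeeping of the three distinct weights has already broken down).
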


In order to apply the above, we start by obtaining a version of Proposition \ref{summary}(b) when $p\leq h-1$ (for $p\geq h$ we can use the proposition itself, and as remarked there, we cannot improve this). We do this by using that if $\Ext_G^1(k,L(\sigma))\neq 0$ then $[\nabla(\sigma):L(0)]\neq 0$ (\cite[Proposition II.2.14]{rags}), and then apply the Jantzen sum formula (\cite[Proposition II.8.19]{rags}) to see which $\sigma$ satisfies this. Note that in type $G_2$, we instead use the tables in \cite{speciale} to do this. In some cases, we will explicitly determine those small weights $\sigma$ for which $\Ext_G^1(k,L(\sigma))\neq 0$, since this will be helpful, as can be seen from Proposition \ref{smallrank}.

Once we have obtained the above, we apply Proposition \ref{smallrank} in several steps. The first step is to use it to reduce the set of weights we need to consider. In some cases, we will instead apply Theorem \ref{smallp} for this. 
In some cases, we will also need to apply the Jantzen sum formula to $\nabla(\lambda)$ for some of those weights $\lambda$ we are left with, (or use the tables in \cite{speciale}), in order to know which simple modules can occur 
as composition factors of $\nabla(\lambda)$. This will also give a further reduction in the weights we need to consider, as we do not need to consider any simple weights.

In the following we will write all weights in the basis consisting of the fundamental weights.

\subsection{Type $\mathbf{A_2}$} Since we have $2h-2 = 4$, we need to consider the cases $p=2$ and $p=3$. 

\subsubsection{$\mathbf{p=2}$} In this case we are done as soon as we apply Theorem \ref{smallp} as there are no weights left to consider.

\subsubsection{$\mathbf{p=3}$} The only weight left to consider after applying Theorem \ref{smallp} is $(2,2) = (p-1)\rho$ which is simple, so we are done.

\subsection{Type $\mathbf{A_3}$} Since $2h-2 = 6$, the cases we need to consider are $p=2$, $p=3$ and $p=5$.

\subsubsection{$\mathbf{p=2}$} The only weight left to consider after applying Theorem \ref{smallp} is $(1,1,1) = (p-1)\rho$, which is simple so we are done.

\subsubsection{$\mathbf{p=3}$} We see that if $\Ext_G^1(k,L(\sigma))\neq 0$ then $\langle\sigma,\alpha_0^{\vee}\rangle \geq 2$ since all the fundamental weights are simple. The weight $(0,2,0)$ shows that we cannot do any better, but this is the only such weight where equality holds (as can be checked using the Jantzen sum formula).

By Theorem \ref{smallp}, we need to consider the weights $(0,2,2)$, $(1,1,2)$, $(1,2,1)$, $(1,2,2)$, $(2,0,2)$, $(2,1,1)$, $(2,1,2)$, $(2,2,0)$, $(2,2,1)$ and $(2,2,2)$. But applying Theorem \ref{simpleweights} we see that the only ones of these we need to consider are $(1,1,2)$, $(1,2,1)$, $(2,0,2)$, $(2,1,1)$ and $(2,1,2)$ as the rest are simple.

By Proposition \ref{smallrank} we can then further restrict to those weights $\lambda$ such that either $3(0,2,0) \leq 2\lambda$ or $\langle\lambda,\alpha_0^{\vee}\rangle \geq 5$. This rules out the weights $(1,1,2)$, $(2,0,2)$ and $(2,1,1)$, so we are left with just $(1,2,1)$ and $(2,1,2)$.

Applying the Jantzen sum formula to $\nabla(1,2,1)$ we see that we have a short exact sequence 
$$0\to L(1,2,1)\to\nabla(1,2,1)\to L(0,2,0)\to 0,$$ 
and we can thus apply Proposition \ref{smallrank} to rule out this weight, as we do not have $3(0,2,0)\leq (1,2,1) + (0,2,0)$ since $(1,2,1)+(0,2,0)-3(0,2,0) = (1,-2,1) = -\alpha_2$.

For the weight $(2,1,2)$ we again apply the Jantzen sum formula and get a short exact sequence 
$$0\to L(2,1,2)\to\nabla(2,1,2)\to L(0,1,0)\to 0.$$ 
Like before, we can rule out this weight as we do not have $3(0,2,0)\leq (0,1,0)+(2,1,2)$ since $(0,1,0)+(2,1,2) - 3(0,2,0) = (2,-4,2) = -2\alpha_2$.

\subsubsection{$p=5$} After applying Theorem \ref{lambdaalpha0}, we are left with the weights $(2,4,4)$, $(3,3,4)$, $(3,4,3)$, $(3,4,4)$, $(4,2,4)$, $(4,3,3)$, $(4,3,4)$, $(4,4,2)$, $(4,4,3)$ and $(4,4,4)$. But applying Theorem \ref{simpleweights} we reduce this to just the weights $(3,3,4)$, $(3,4,3)$, $(4,2,4)$, $(4,3,3)$ and $(4,3,4)$. And the result for $(3,3,4)$ follows from the result for $(4,3,3) = -w_0(3,3,4)$.

By Proposition \ref{summary}, if $\Ext_G^1(k,L(\sigma))\neq 0$ then $\langle\sigma,\alpha_0^{\vee}\rangle \geq 4$. Thus, by Proposition \ref{smallrank} we see that it will be sufficient, for each weight $\lambda$ in the above list, to show that if $L(\mu)$ is a composition factor of $\nabla(\lambda)$ with $\mu\neq \lambda$ then $\langle\mu + \lambda,\alpha_0^{\vee}\rangle < 5\cdot 4 = 32$, ie that $\langle\mu,\alpha_0^{\vee}\rangle \leq 19 - \langle\lambda,\alpha_0^{\vee}\rangle$.

Applying the Jantzen sum formula, we get the following (we do not need to compute the characters completely, as we only need bounds on the weights occuring):
\begin{itemize}
\item[(i)] For $\lambda = (4,3,3)$, all weights $\mu$ that occur have $\langle\mu,\alpha_0^{\vee}\rangle \leq 7 < 19 - \langle\lambda,\alpha_0^{\vee}\rangle = 9$.
\item[(ii)] For $\lambda = (4,3,4)$, all weights $\mu$ that occur have $\langle\mu,\alpha_0^{\vee}\rangle \leq 3 < 19 - \langle\lambda,\alpha_0^{\vee}\rangle = 8$.
\item[(iii)] For $\lambda = (4,2,4)$, all weights $\mu$ that occur have $\langle\mu,\alpha_0^{\vee}\rangle \leq 4 < 19 - \langle\lambda,\alpha_0^{\vee}\rangle = 9$.
\item[(iv)] For $\lambda = (3,4,3)$, all weights $\mu$ that occur have $\langle\mu,\alpha_0^{\vee}\rangle \leq 6 < 19 - \langle\lambda,\alpha_0^{\vee}\rangle = 9$.
\end{itemize} 
Thus we have dealt with all the weights in this case.

\subsection{Type $\mathbf{B_2}$} Here $2h-2 = 6$ so we need to deal with the cases $p=2$, $p=3$ and $p=5$.

\subsubsection{$\mathbf{p=2}$} After applying Theorem \ref{smallp}, the only weight left is $(1,1) = (p-1)\rho$ which is simple, so this case is done.

\subsubsection{$\mathbf{p=3}$} In this case we see that if $\Ext_G^1(k,L(\sigma))\neq 0$ then $\langle\sigma,\alpha_0^{\vee}\rangle\geq 3$ (by using the Jantzen sum formula to check that all the weights $(1,0)$, $(0,1)$ and $(0,2)$ are simple).

Thus by Proposition \ref{smallrank} we only need to consider weights $\lambda$ with $2\langle\lambda,\alpha_0^{\vee}\rangle \geq 3\cdot 3 = 9$, ie with $\langle\lambda,\alpha_0^{\vee}\rangle \geq 5$. This means we just need to consider the weights $(2,1)$ and $(2,2) = (p-1)\rho$. The latter is simple, as is $(2,1)$ (by applying the Jantzen sum formula), so this case is done.

\subsubsection{$\mathbf{p=5}$} After applying Theorem \ref{lambdaalpha0} we are left with the weights $(3,4)$, $(4,2)$, $(4,3)$ and $(4,4) = (p-1)\rho$. The last of these is simple, and so are $(4,2)$ and $(4,3)$ (seen by applying the Jantzen sum formula). This leaves us with just the weight $(3,4)$.

By Proposition \ref{summary}, if $\Ext_G^1(k,L(\sigma))\neq 0$ then $\langle\sigma,\alpha_0^{\vee}\rangle \geq 4$. Using the Jantzen sum formula, we see that there is a short exact sequence 
$$0\to L(3,4)\to \nabla(3,4)\to L(0,4)\to 0,$$ 
so we are done by Proposition \ref{smallrank} since it is not the case that $14 = \langle(3,4)+(0,4),\alpha_0^{\vee}\rangle \geq 5\cdot 4 = 20$.

\subsection{Type $\mathbf{G_2}$} Here $2h-2 = 10$ so we need to deal with the cases $p=2$, $p=3$, $p=5$ and $p=7$. However, we will not be able to deal with the case $p=7$.

\subsubsection{$\mathbf{p=2}$} Here the weights we need to consider after applying Theorem \ref{smallp} are $(0,1)$ and $(1,1) = (p-1)\rho$. The last of these is simple, and so is $(0,1)$ (as can be seen from the table on p. 90 of \cite{speciale}). So we are done in this case.

\subsubsection{$\mathbf{p=3}$} From the table on p. 85 of \cite{speciale} we see that if $\Ext_G^1(k,L(\sigma))\neq 0$ then $\langle\sigma,\alpha_0^{\vee}\rangle \geq 5$.
By Proposition \ref{smallrank} we thus only need to consider weights $\lambda$ with $2\langle\lambda,\alpha_0^{\vee}\rangle \geq 3\cdot 5 = 15$, ie with $\langle\lambda,\alpha_0^{\vee}\rangle\geq 8$. But the only restricted weight satisfying this are $(1,2)$ and $(2,2) = (p-1)\rho$. Since the latter of these is simple, we are left with $(1,2)$.

Looking at the same table again, we see that if $L(\mu)$ is a composition factor of $\nabla(1,2)$ then $\langle\mu,\alpha_0^{\vee}\rangle \leq 6$, so by Proposition \ref{smallrank}we are done, since $\langle(1,2),\alpha_0^{\vee}\rangle + 6 = 14 < 3\cdot 5 = 15$.

\subsubsection{$\mathbf{p=5}$} From the table on p. 83 of \cite{speciale} we see that if $\Ext_G^1(k,L(\sigma))\neq 0$ then $\langle\sigma,\alpha_0^{\vee}\rangle \geq 15$.
By Proposition \ref{smallrank} we thus see that we only need to consider weights $\lambda$ with $2\langle\lambda,\alpha_0^{\vee}\rangle \geq 5\cdot 15 = 75$. But there are no restricted weights satisfying this, so we are done.

\subsubsection{$\mathbf{p=7}$} As mentioned in the beginning of this section, we will not be able to deal with this case.

The reason for this is that there are in fact weights $\sigma,\lambda,\mu$ with $\lambda\in X_1$, $\mu\neq \lambda$ and such that we have  $\Ext_G^1(k,L(\sigma))\neq 0$, 
$[\nabla(\lambda):L(\mu)]\neq 0$ and $p\sigma\leq \lambda + \mu$. In particular, we can take $\sigma = 2\alpha_0$, which is sufficiently ``small'' that for many choices of $\lambda\in X_1$ 
there will be a $\mu$ satisfying the above. For more information about composition factors of the induced modules in this case, one can consult \cite{mertens85}. Thus, it no longer suffices to apply Proposition \ref{smallrank} in this case.

\section{Cohomological Criteria: Donkin's Conjecture} 

\subsection{Setting} Let $\lambda\in X_{+}$ and write $\lambda=\lambda_{0}+p^{r}\lambda_{1}$ where $\lambda_{0}\in X_{r}$ and $\lambda_{1}\in X_{+}$. Set 
$\hat{\lambda}_{0}=2(p^{r}-1)\rho-w_{0}(\lambda_{0})$, and $\Delta^{(p,r)}(\lambda)=T(\hat{\lambda}_{0})\otimes \Delta(\lambda_{1})^{( r )}$. It should be noted that the modules 
$\Delta^p(\lambda)$ in \cite{parshallscottpfilt} are not the same as our $\Delta^{(p,1)}(\lambda)$.

\begin{theorem} \label{T:nabla-Deltacompare1} Let $\lambda,\mu \in X_{+}$. Assume that 
$\operatorname{Hom}_{G_{r}}(T(\hat{\lambda}_{0}),L(\mu_{0}))^{(-r)}$ has a good filtration. 
Then 
\begin{equation*} 
\operatorname{Ext}_{G}^{n}(\Delta^{(p,r)}(\lambda),\nabla^{(p,r)}(\mu))=0 
\end{equation*} 
for $n>0$. 
\end{theorem}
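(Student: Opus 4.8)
The plan is to compute $\operatorname{Ext}_G^n(\Delta^{(p,r)}(\lambda),\nabla^{(p,r)}(\mu))$ by first passing to $G_r$-cohomology via the Lyndon–Hochschild–Serre spectral sequence for $G_r \lhd G$, exactly as in the proof of Theorem~\ref{altconditions}. Writing $\Delta^{(p,r)}(\lambda) = T(\hat\lambda_0)\otimes\Delta(\lambda_1)^{(r)}$ and $\nabla^{(p,r)}(\mu) = L(\mu_0)\otimes\nabla(\mu_1)^{(r)}$, the point is that $T(\hat\lambda_0)$ is a tilting module and $\hat\lambda_0 = 2(p^r-1)\rho - w_0(\lambda_0)$ is chosen precisely so that $T(\hat\lambda_0)$ is the injective hull (equivalently projective cover) of $L(\lambda_0)$ as a $G_r$-module — this is the content of Donkin's tilting module identification, and it makes $T(\hat\lambda_0)$ both projective and injective over $G_r$. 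Consequently $\operatorname{Hom}_{G_r}(\Delta^{(p,r)}(\lambda),-)$ is exact on $G_r$-modules and $\operatorname{Ext}^j_{G_r}(\Delta^{(p,r)}(\lambda),N) = 0$ for $j>0$, so the spectral sequence collapses to give
\begin{equation*}
\operatorname{Ext}_G^n(\Delta^{(p,r)}(\lambda),\nabla^{(p,r)}(\mu)) \cong \operatorname{Ext}^n_{G/G_r}\!\bigl(k,\operatorname{Hom}_{G_r}(\Delta^{(p,r)}(\lambda),\nabla^{(p,r)}(\mu))\bigr).
\end{equation*}

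Next I would untwist the $G_r$-Hom. Since $\nabla(\mu_1)^{(r)}$ is inflated from $G/G_r$, it pulls out of the $G_r$-Hom, giving
\begin{equation*}
\operatorname{Hom}_{G_r}(\Delta^{(p,r)}(\lambda),\nabla^{(p,r)}(\mu)) \cong \operatorname{Hom}_{G_r}(T(\hat\lambda_0)\otimes\Delta(\lambda_1)^{(r)},L(\mu_0))\otimes\nabla(\mu_1)^{(r)}.
\end{equation*}
Using self-duality of tilting modules and adjunction to move $\Delta(\lambda_1)^{(r)}$ across, this rearranges so that $\operatorname{Hom}_{G_r}(T(\hat\lambda_0),L(\mu_0))^{(-r)}$ appears, tensored with $\nabla(\lambda_1)^{*(r)}\otimes\nabla(\mu_1)^{(r)}$ (equivalently $\Delta(\lambda_1)^{(r)}$ can be kept on the other side as a $\nabla$-factor after dualizing). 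The upshot is a $G/G_r$-module which, after applying $(-)^{(-r)}$ to untwist Frobenius, is of the form $H\otimes(\text{something with a good filtration})$, where $H = \operatorname{Hom}_{G_r}(T(\hat\lambda_0),L(\mu_0))^{(-r)}$ is assumed to have a good filtration, and the "something" is a tensor product of $\nabla$'s (or $\Delta^*$'s) which also has a good filtration by Theorem~\ref{tensorproductgoodfilt}. Hence the whole $G$-module inside the outer Ext has a good filtration.

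Finally, $\operatorname{Ext}^n_{G/G_r}(k,-) \cong \operatorname{Ext}^n_G(k,(-)^{(r)})$ by the Frobenius twist, and a $G$-module with a good filtration $V$ satisfies $\operatorname{Ext}^n_G(k,V) = \operatorname{Ext}^n_G(\Delta(0),V) = 0$ for $n>0$ by Theorem~\ref{cohcrit} (with $\mu = 0$). This kills the outer Ext for all $n>0$ and finishes the argument. The main obstacle I anticipate is bookkeeping the dualizations and Frobenius twists correctly in the middle step — in particular verifying that after pulling $\Delta(\lambda_1)^{(r)}$ out of the $G_r$-Hom and untwisting, the residual factor genuinely has a good filtration (one must be careful whether it is $\nabla(\lambda_1)\otimes\nabla(\mu_1)$ or involves $\nabla(-w_0\lambda_1)$, but either way Theorem~\ref{tensorproductgoodfilt} applies since $\Delta(\lambda_1)^* \cong \nabla(-w_0\lambda_1)$). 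The tilting/injectivity input for $T(\hat\lambda_0)$ over $G_r$ is the other key ingredient, but that is standard (Donkin, and \cite{rags} II.E).
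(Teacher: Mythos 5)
Your proposal follows essentially the same route as the paper: the Lyndon--Hochschild--Serre spectral sequence for $G_r\lhd G$, collapse via $G_r$-projectivity of $T(\hat\lambda_0)$, extraction of $\operatorname{Hom}_{G_r}(T(\hat\lambda_0),L(\mu_0))$ tensored with induced modules, and the cohomological criterion of Theorem~\ref{cohcrit}. The only cosmetic difference is that you push $\Delta(\lambda_1)^{(r)}$ into the inner Hom and dualize it to $\nabla(-w_0\lambda_1)^{(r)}$, whereas the paper keeps it as the first argument of the outer $\operatorname{Ext}_{G/G_r}$; both work, since a tensor product of induced modules has a good filtration by Theorem~\ref{tensorproductgoodfilt}.

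One point needs correcting. You justify the $G_r$-projectivity (and injectivity) of $T(\hat\lambda_0)$ by asserting that $T(\hat\lambda_0)$ is the $G_r$-injective hull $Q_r(\lambda_0)$ of $L(\lambda_0)$. That identification is Donkin's Tilting Module Conjecture, known only for $p\geq 2h-2$; if the proof relied on it, the theorem would become conditional and the carefully chosen hypothesis (that $\operatorname{Hom}_{G_r}(T(\hat\lambda_0),L(\mu_0))^{(-r)}$ has a good filtration) would be pointless --- indeed the whole design of this section is to avoid assuming the conjecture. Fortunately the fact you actually need is unconditional: since $\hat\lambda_0-(p^r-1)\rho=(p^r-1)\rho-w_0(\lambda_0)\in X_+$, the tilting module $T(\hat\lambda_0)$ is projective and injective as a $G_r$-module by \cite[Lemma II.E.8]{rags}, with no identification of its indecomposable structure required. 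With that substitution your argument goes through and matches the paper's.
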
 

\begin{proof} We first apply the LHS spectral sequence: 
$$E_{2}^{i,j}=\text{Ext}^{i}_{G/G_{r}}(\Delta(\lambda_{1})^{( r )},\text{Ext}^{j}_{G_{r}}(T(\hat{\lambda}_{0}),L(\mu_{0}))\otimes \nabla(\mu_{1})^{( r )}) 
\Rightarrow \text{Ext}^{i+j}_{G}(\Delta^{(p,r)}(\lambda), \nabla^{(p,r)}(\mu)).$$
Since $T(\widehat{\lambda}_{0})$ is projective over $G_{r}$ this spectral sequence collapses and yields for $n\geq 0$ 
\begin{equation} 
\text{Ext}^{n}_{G}(\Delta^{(p,r)}(\lambda), \nabla^{(p,r)}(\mu))
\cong 
\text{Ext}^{n}_{G/G_{r}}(\Delta(\lambda_{1})^{( r )},\text{Hom}_{G_{r}}(T(\widehat{\lambda}_{0}),L(\mu_{0})) \otimes \nabla(\mu_{1})^{( r )}).
\end{equation}  
The result now follows by using the assumption that $\operatorname{Hom}_{G_{r}}(T(\hat{\lambda}_{0}),L(\mu_{0}))^{(-r)}$ has a good filtration and the 
fact that tensor products of induced modules admit a good filtration. 
\end{proof} 

In the case when $p\geq 2h-2$ (or in the cases when Donkin's Tilting Conjecture holds), one has $T(\hat{\lambda}_{0})\cong Q_{r}(\lambda_{0})$ for all $\lambda_{0}\in X_{r}$, and 
we can refine the aforementioned theorem. 

\begin{corollary} \label{C:nabla-Deltacompare2} Let $p\geq 2h-2$ and $\lambda,\mu \in X_{+}$. Then 
\begin{equation*} 
\operatorname{Ext}_{G}^{n}(\Delta^{(p,r)}(\lambda),\nabla^{(p,r)}(\mu))\cong 
\begin{cases} 0 & \text{$n>0$}\mbox{ or } \text{$\lambda\neq \mu$} \\
k & \text{$n=0$ and $\lambda=\mu$} 
\end{cases} 
\end{equation*} 
\end{corollary}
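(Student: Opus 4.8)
The plan is to deduce the Corollary from Theorem~\ref{T:nabla-Deltacompare1} by checking that the hypothesis $\operatorname{Hom}_{G_r}(T(\hat\lambda_0),L(\mu_0))^{(-r)}$ has a good filtration becomes trivial when $p\geq 2h-2$, and then to pin down the $n=0$, $\lambda=\mu$ case separately. First I would recall that for $p\geq 2h-2$ Donkin's Tilting Module Conjecture holds (Andersen, Donkin), so $T(\hat\lambda_0)\cong Q_r(\lambda_0)$, the projective cover (equivalently injective hull) of $L(\lambda_0)$ as a $G_r$-module, where $\hat\lambda_0=2(p^r-1)\rho-w_0(\lambda_0)$. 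Then $\operatorname{Hom}_{G_r}(Q_r(\lambda_0),L(\mu_0))$ is the multiplicity of $L(\lambda_0)$ in the $G_r$-head of $L(\mu_0)$, which is $k$ if $\lambda_0=\mu_0$ (as $G_r$-modules, i.e.\ $\mu_0\equiv\lambda_0$ suitably) and $0$ otherwise; as a $G/G_r$-module this $\operatorname{Hom}$-space carries a twisted structure, and being at most $1$-dimensional it is either $0$ or $k^{(r)}$, both of which trivially have good filtrations after untwisting. Hence Theorem~\ref{T:nabla-Deltacompare1} applies unconditionally for every $\lambda,\mu\in X_+$, giving $\operatorname{Ext}^n_G(\Delta^{(p,r)}(\lambda),\nabla^{(p,r)}(\mu))=0$ for all $n>0$.

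Next I would handle $n=0$. Using the collapse isomorphism from the proof of Theorem~\ref{T:nabla-Deltacompare1},
\begin{equation*}
\operatorname{Hom}_G(\Delta^{(p,r)}(\lambda),\nabla^{(p,r)}(\mu))\cong \operatorname{Hom}_{G/G_r}\!\bigl(\Delta(\lambda_1)^{(r)},\operatorname{Hom}_{G_r}(T(\hat\lambda_0),L(\mu_0))\otimes\nabla(\mu_1)^{(r)}\bigr).
\end{equation*}
If $\lambda_0\neq\mu_0$ the inner $\operatorname{Hom}_{G_r}$ vanishes and we get $0$. If $\lambda_0=\mu_0$, the inner term is $k^{(r)}$ (the one-dimensional twisted module), so the right-hand side is $\operatorname{Hom}_{G/G_r}(\Delta(\lambda_1)^{(r)},\nabla(\mu_1)^{(r)})\cong\operatorname{Hom}_G(\Delta(\lambda_1),\nabla(\mu_1))$, which is $k$ when $\lambda_1=\mu_1$ and $0$ otherwise by the standard orthogonality of Weyl and dual Weyl modules (\cite[Proposition II.4.13]{rags}). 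Since $\lambda_0=\mu_0$ and $\lambda_1=\mu_1$ together are equivalent to $\lambda=\mu$, this gives exactly the stated value $k$ when $\lambda=\mu$ and $0$ when $\lambda\neq\mu$ (the latter combining with the vanishing already obtained when $\lambda_0\neq\mu_0$).

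The main obstacle I anticipate is bookkeeping rather than depth: one must be careful about what "$\lambda_0=\mu_0$'' means as $G_r$-modules versus as weights, and about the twist. Concretely, $\operatorname{Hom}_{G_r}(Q_r(\lambda_0),L(\mu_0))$ is nonzero precisely when $L(\mu_0)\cong L(\lambda_0)$ as $G_r$-modules, i.e.\ $\mu_0=\lambda_0$ since both lie in $X_r$; and then this $\operatorname{Hom}$-space, viewed as a $G/G_r\cong G$-module via the residual action, is isomorphic to $k^{(r)}$ — one should cite the identification $Q_r(\lambda_0)\cong T(\hat\lambda_0)$ together with \cite[Lemma II.10.2]{rags}-type facts on $G_r$-socles/heads to justify this cleanly. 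I would also remark that the case $\lambda=\mu$, $n=0$ value $k$ is consistent with $\Delta^{(p,r)}$ and $\nabla^{(p,r)}$ being ``standard'' and ``costandard'' objects with the expected $\operatorname{Hom}$-orthogonality, which is the point of introducing these modules.
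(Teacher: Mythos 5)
Your proposal is correct and follows essentially the same route as the paper: identify $T(\hat\lambda_0)$ with $Q_r(\lambda_0)$ for $p\geq 2h-2$, observe that $\Hom_{G_r}(Q_r(\lambda_0),L(\mu_0))$ is $k$ or $0$ according to whether $\lambda_0=\mu_0$, and combine the collapsed spectral-sequence isomorphism with the standard $\operatorname{Ext}$-orthogonality $\operatorname{Ext}^n_G(\Delta(\lambda_1),\nabla(\mu_1))$ via \cite[Proposition II.4.13]{rags}. The only cosmetic differences are that you spell out the $n=0$ bookkeeping more explicitly than the paper does, which is harmless.
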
 

\begin{proof} In this case $T(\hat{\lambda}_{0})\cong Q_{r}(\lambda_{0})$ for all $\lambda_{0}\in X_{r}$, and 
$\operatorname{Hom}_{G_{r}}(Q_{r}(\lambda_{0}),L(\mu_{0}))^{(-r)}$ has a good filtration because it is either 
$k$ or zero. Moreover, by \cite[Proposition II.4.13]{rags},
\begin{eqnarray}  
\text{Ext}^{n}_{G/G_{r}}(\Delta(\lambda_{1})^{( r )}, \nabla(\mu_{1})^{( r )} )
&\cong & \text{Ext}^{n}_{G}(\Delta(\lambda), \nabla(\mu)) \\
&\cong & \begin{cases} 0 & \text{$n>0$}, \text{$n=0$ and $\lambda_{1}\neq \mu_{1}$} \\
k & \text{$n=0$ and $\lambda_{1}=\mu_{1}$}. 
\end{cases} 
\end{eqnarray} 
The result now follows by applying these facts. 
\end{proof} 

If $M$ is a finite-dimensional rational $G$-module admitting a good $(p,r)$-filtration, then we denote by $[M:\nabla^{(p,r)}(\lambda)]_{(p,r)}$ the number of times $\nabla^{(p,r)}(\lambda)$ occurs in such a filtration for some $\lambda\in X_+$ (this is easily seen to be well-defined since each $\nabla^{(p.r)}(\lambda)$ has a unique highest weight).

We then have the following.

\begin{corollary}
Assume that $p\geq 2h-2$, let $M$ be a finite-dimensional rational $G$-module admitting a good $(p,r)$-filtration and $\lambda\in X_+$. Then $$[M:\nabla^{(p,r)}(\lambda)]_{(p,r)} = \dim\Hom_G(\Delta^{(p,r)}(\lambda),M).$$
\end{corollary}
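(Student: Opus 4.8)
The plan is to proceed by induction on the length of a good $(p,r)$-filtration of $M$, using Corollary~\ref{C:nabla-Deltacompare2} to read off the multiplicity via $\Hom_G(\Delta^{(p,r)}(\lambda),-)$. First I would fix a good $(p,r)$-filtration
\begin{equation*}
0 = M_0 \subseteq M_1 \subseteq \cdots \subseteq M_t = M
\end{equation*}
with $M_{i}/M_{i-1} \cong \nabla^{(p,r)}(\mu_i)$ for suitable $\mu_i \in X_+$, and show that $\dim\Hom_G(\Delta^{(p,r)}(\lambda),M)$ equals the number of indices $i$ with $\mu_i = \lambda$, which is $[M:\nabla^{(p,r)}(\lambda)]_{(p,r)}$ by the well-definedness remark preceding the statement. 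The base case $t=0$ is trivial, and for the inductive step I would apply $\Hom_G(\Delta^{(p,r)}(\lambda),-)$ to the short exact sequence
\begin{equation*}
0 \to M_{t-1} \to M \to \nabla^{(p,r)}(\mu_t) \to 0
\end{equation*}
to obtain a long exact sequence in $\Ext^*_G(\Delta^{(p,r)}(\lambda),-)$.

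The key point is that this long exact sequence degenerates to short exact pieces in the relevant degrees. By Corollary~\ref{C:nabla-Deltacompare2}, $\Ext^1_G(\Delta^{(p,r)}(\lambda),\nabla^{(p,r)}(\mu_t)) = 0$, so the sequence
\begin{equation*}
0 \to \Hom_G(\Delta^{(p,r)}(\lambda),M_{t-1}) \to \Hom_G(\Delta^{(p,r)}(\lambda),M) \to \Hom_G(\Delta^{(p,r)}(\lambda),\nabla^{(p,r)}(\mu_t)) \to 0
\end{equation*}
is exact. Hence $\dim\Hom_G(\Delta^{(p,r)}(\lambda),M) = \dim\Hom_G(\Delta^{(p,r)}(\lambda),M_{t-1}) + \dim\Hom_G(\Delta^{(p,r)}(\lambda),\nabla^{(p,r)}(\mu_t))$. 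The last term is $1$ if $\mu_t = \lambda$ and $0$ otherwise, again by Corollary~\ref{C:nabla-Deltacompare2}, so the count increases by exactly the contribution of the top section. By induction $\dim\Hom_G(\Delta^{(p,r)}(\lambda),M_{t-1}) = [M_{t-1}:\nabla^{(p,r)}(\lambda)]_{(p,r)}$, and adding the contribution of $\mu_t$ gives the result.

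I expect no serious obstacle here: the argument is the standard "Euler characteristic / dévissage" computation once the vanishing of $\Ext^1$ between a $\Delta^{(p,r)}$ and a $\nabla^{(p,r)}$ is in hand, which is precisely Corollary~\ref{C:nabla-Deltacompare2} under the hypothesis $p \geq 2h-2$. The only minor point to be careful about is that $[M:\nabla^{(p,r)}(\lambda)]_{(p,r)}$ is independent of the chosen good $(p,r)$-filtration; this is guaranteed by the observation (recalled just before the corollary) that each $\nabla^{(p,r)}(\lambda)$ has a unique highest weight, so I would simply invoke that remark rather than reprove it. If one prefers to avoid even mentioning well-definedness, one can phrase the conclusion as: for \emph{every} good $(p,r)$-filtration of $M$, the number of sections isomorphic to $\nabla^{(p,r)}(\lambda)$ equals $\dim\Hom_G(\Delta^{(p,r)}(\lambda),M)$, which in particular reproves the independence.
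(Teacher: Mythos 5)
Your proof is correct and is essentially the paper's argument: the paper simply states that the corollary ``follows immediately from Corollary~\ref{C:nabla-Deltacompare2},'' and your induction on the length of the good $(p,r)$-filtration, using the $\Ext^1$-vanishing and the $\Hom$-computation from that corollary, is precisely the standard d\'evissage that makes this immediate deduction explicit.
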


\begin{proof}
This follows immediately from Corollary \ref{C:nabla-Deltacompare2}.
\end{proof}

\subsection{} We can now provide 
necessary and sufficient cohomological conditions for $\text{St}_{r}\otimes M$ to admit a good filtration.

\begin{lemma}\label{simplesummand}
Let $\lambda\in X_+$ and assume that $L(\lambda) = \nabla(\lambda)$. Let $M$ be a rational $G$-module with $M_{\lambda}\neq 0$ and such that $M_{\mu}\neq 0\implies \mu\leq\lambda$. Then $L(\lambda)$ is a direct summand of $M$.
\end{lemma}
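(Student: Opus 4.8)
The plan is to exhibit explicit maps $L(\lambda)\to M$ and $M\to L(\lambda)$ whose composite is a nonzero scalar, and then split. I would first record the reduction that $L(\lambda)=\nabla(\lambda)$ forces $L(\lambda)=\Delta(\lambda)=\nabla(\lambda)$: the Weyl module $\Delta(\lambda)$ surjects onto $L(\lambda)$, and $\dim\Delta(\lambda)=\dim\nabla(\lambda)=\dim L(\lambda)$, so the three modules coincide. Write $V:=L(\lambda)$; by Schur's lemma $\End_G(V)=k$. The hypothesis on $M$ says precisely that $\lambda$ is maximal among the weights of $M$ and that $M_\lambda\neq 0$; fix a nonzero $v\in M_\lambda$.

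Next I would construct a surjection $g\colon M\twoheadrightarrow V$. By Frobenius reciprocity $\Hom_G(M,\nabla(\lambda))\cong\Hom_B(M,k_\lambda)$. Since $\lambda$ is maximal among the weights of $M$, the unipotent radical of $B$ moves weights strictly downward, so the $M_\lambda$-component of $u\cdot m$ equals that of $m$ for every $u\in R_u(B)$; consequently every $T$-equivariant linear map $M\to k_\lambda$ (which must annihilate $M_\mu$ for $\mu\neq\lambda$) is automatically $B$-equivariant, whence $\Hom_G(M,\nabla(\lambda))\cong (M_\lambda)^{*}$. Picking the map corresponding to a functional on $M_\lambda$ nonzero at $v$ and composing with the counit $\nabla(\lambda)\to k_\lambda$ yields $g\in\Hom_G(M,V)$ with $g(v)\neq 0$; as $V$ is simple, $g$ is surjective.

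Then I would construct an injection $f\colon V\to M$. Because $\lambda$ is maximal among the weights of $M$, the vector $v$ is a maximal vector of weight $\lambda$ (it is killed by the positive root operators, nothing lying above $\lambda$), so by the universal property of the Weyl module $\Delta(\lambda)$ (see \cite{rags}) there is $f\in\Hom_G(\Delta(\lambda),M)=\Hom_G(V,M)$ sending the canonical generator of $\Delta(\lambda)$ to $v$; since $V$ is simple and $f\neq 0$, $f$ is injective. Now $g\circ f\in\End_G(V)=k$ sends the generator to $g(v)\neq 0$, so $g\circ f=c\,\mathrm{id}_V$ with $c\in k^{\times}$, and $c^{-1}f$ is a section of $g$. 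Hence $M=f(V)\oplus\ker g$ with $f(V)\cong L(\lambda)$, as claimed. (If $M$ is infinite dimensional nothing changes: $v$ and all the maps above live in, or factor through, finite-dimensional pieces, and both Frobenius reciprocity and the universal property of $\Delta(\lambda)$ hold for arbitrary rational $G$-modules.)

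The only non-formal ingredient is the existence of $f$, i.e.\ that $\Hom_G(\Delta(\lambda),M)\neq 0$, and this is where I expect the sole subtlety to lie. One cannot obtain it merely by dualizing the argument that produced $g$, since that would require all weights of $M$ to be $\geq w_0\lambda$, which is not assumed; so one genuinely invokes the universal property of $\Delta(\lambda)$ (equivalently, the fact that $\Delta(\lambda)$, being equal to $L(\lambda)$ here, is relatively projective with respect to the class of modules all of whose weights are $\leq\lambda$).
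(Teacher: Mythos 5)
Your proof is correct, and its first half coincides with the paper's: both obtain the surjection $M\twoheadrightarrow L(\lambda)$ from the $B$-equivariant projection onto the maximal weight space $M_\lambda$ together with Frobenius reciprocity. Where you genuinely differ is in how the resulting short exact sequence $0\to N\to M\to L(\lambda)\to 0$ is split. The paper argues homologically: every composition factor $L(\mu)$ of $N$ has $\mu\leq\lambda$, and by \cite[II.2.14]{rags} a nonzero $\Ext_G^1(L(\lambda),L(\mu))$ with $\mu\leq\lambda$, $\mu\neq\lambda$ would force $L(\mu)$ to occur in $\nabla(\lambda)$, which is impossible since $\nabla(\lambda)$ is simple; hence the sequence splits. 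You instead construct the splitting explicitly: from $L(\lambda)=\nabla(\lambda)$ you deduce $\Delta(\lambda)=L(\lambda)$, observe that a nonzero $v\in M_\lambda$ is fixed by the positive unipotent radical because $\lambda$ is maximal among the weights of $M$, and invoke the universal property of the Weyl module ($\Hom_G(\Delta(\lambda),M)\cong\Hom_{B^+}(\lambda,M)$, cf.\ \cite[II.2.13]{rags}) to produce $f\colon L(\lambda)\to M$ with $g\circ f$ a nonzero scalar, so $M=f(L(\lambda))\oplus\ker g$. Both routes are sound; yours is more constructive and avoids the $\Ext^1$-between-simples criterion, at the cost of using the (immediate) identification $\Delta(\lambda)=L(\lambda)$, whereas the paper's argument uses only the simplicity of $\nabla(\lambda)$. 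One phrasing slip, not a gap: the $G$-map $g\colon M\to\nabla(\lambda)$ is the one furnished by adjunction from the $B$-functional, and composing $g$ with the counit recovers that functional (rather than producing $g$); your conclusion $g(v)\neq 0$ is unaffected.
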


\begin{proof}
Since $\lambda$ is largest among the weights of $M$, there is a surjective homomorphism $M\to \lambda$ as $B$-modules. By Frobenius reciprocity, we obtain a non-zero homomorphism $M\to \nabla(\lambda) = L(\lambda)$ which must therefore be surjective, 
which yields a short exact sequence $0\to N\to M\to L(\lambda)\to 0$.

We now just need to show that this sequence splits, and for this it is sufficient to show that $\Ext^1_G(L(\lambda),L(\mu)) = 0$ for all composition factors $L(\mu)$ of $N$. But since $\mu\leq\lambda$, if $\Ext_G^1(L(\lambda),L(\mu))\neq 0$ then $\mu\neq \lambda$ and $L(\mu)$ is a composition factor of $\nabla(\lambda)$ (by \cite[II.2.14]{rags}), which cannot be the case as $\nabla(\lambda)$ was assumed to be simple.
\end{proof}

The following lemma and its proof are essentially identical to \cite[Theorem 2.5]{donkin93} (that statement and proof are originally due to C. Pillen \cite[Corollary A]{pillen93}), except we have replaced 
$L((p^r - 1)\rho + w_0(\lambda))$ by $\Delta((p^r - 1)\rho + w_0(\lambda))$ in the statement and made minor changes in the proof in order to accomodate this change.

\begin{lemma}\label{directsummand}
Let $\lambda\in X_r$. Then $T(\hat{\lambda})$ is a direct summand of $\St_r\otimes \Delta((p^r - 1)\rho + w_0(\lambda))$.
\end{lemma}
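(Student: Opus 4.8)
The plan is to identify $T(\hat\lambda)$ as a summand of $\St_r\otimes\Delta((p^r-1)\rho+w_0(\lambda))$ by exhibiting that this tensor product is a tilting module with $\hat\lambda = 2(p^r-1)\rho - w_0(\lambda)$ as its unique highest weight, so that $T(\hat\lambda)$ splits off. First I would check that $\St_r\otimes\Delta((p^r-1)\rho+w_0(\lambda))$ has both a good filtration and a Weyl filtration, i.e. is a tilting module: $\St_r = \Delta((p^r-1)\rho) = \nabla((p^r-1)\rho)$ is itself tilting, and the tensor product of a tilting module with a Weyl module need not be tilting in general, so the key input is that $\St_r\otimes M$ has a good filtration whenever $M$ does — here applied with $M = \Delta((p^r-1)\rho+w_0(\lambda))$ replaced by a suitable $\nabla$ after using $\St_r\cong\St_r^*$ and self-duality (contravariant duality) of tilting modules; one shows $\St_r\otimes\Delta(\cdot)$ has a Weyl filtration by dualizing the good-filtration statement of Theorem~\ref{tensorproductgoodfilt}, since $\St_r$ is self-dual and $\Delta(\nu)^* \cong \nabla(-w_0\nu)$.

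Next I would compute the highest weight. The highest weight of $\Delta((p^r-1)\rho+w_0(\lambda))$ is $(p^r-1)\rho+w_0(\lambda)$, and the highest weight of $\St_r$ is $(p^r-1)\rho$, so the highest weight of the tensor product is $2(p^r-1)\rho+w_0(\lambda)$. This is precisely $\hat\lambda$ as defined in the ``Setting'' subsection: $\hat\lambda_0 = 2(p^r-1)\rho - w_0(\lambda_0)$, and with $\lambda\in X_r$ we have $\lambda=\lambda_0$, but note the sign — I would double-check that $w_0(\lambda)$ versus $-w_0(\lambda)$ is handled correctly, possibly by using that $-w_0$ permutes the dominant weights and $\hat\lambda$ is dominant since $\langle\lambda,\alpha^\vee\rangle < p^r$ forces $-w_0(\lambda)$ to have coefficients below $2(p^r-1)$ against $\rho$'s coefficient of $2(p^r-1)$. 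Once the tensor product is known to be tilting with highest weight $\hat\lambda$ (appearing with multiplicity one, as it does in $\St_r\otimes\Delta(\cdot)$ by counting weight spaces via the two highest weights), the indecomposable tilting module $T(\hat\lambda)$ occurs as a direct summand exactly once, which is the claim. I would invoke the standard fact (e.g.\ from \cite[Ch.~E]{rags}) that a tilting module decomposes as a direct sum of the $T(\nu)$ with multiplicities, and $T(\hat\lambda)$ appears iff $\hat\lambda$ is among the weights, which it is, being the highest.

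The main obstacle I anticipate is verifying that $\St_r\otimes\Delta((p^r-1)\rho+w_0(\lambda))$ genuinely has a Weyl filtration, since Theorem~\ref{tensorproductgoodfilt} as stated only gives tensor products of good filtrations a good filtration, not tensor products involving Weyl modules. The resolution is the contravariant-duality argument: applying the duality functor $M\mapsto M^\tau$ (which fixes simples and swaps $\Delta$ with $\nabla$) to $\St_r^\tau\otimes\nabla(-w_0((p^r-1)\rho+w_0(\lambda))) = \St_r\otimes\nabla((p^r-1)\rho - \lambda + \text{correction})$ — here one must track the twist carefully, using $-w_0((p^r-1)\rho) = (p^r-1)\rho$ — shows the Weyl filtration exists precisely because the dual object has a good filtration by Theorem~\ref{tensorproductgoodfilt}. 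I would also remark that the cleaner route, matching the cited \cite[Theorem 2.5]{donkin93}/\cite[Corollary A]{pillen93}, is to establish the good and Weyl filtrations directly and then invoke that $\St_r$ being $G_r$-projective forces $\St_r\otimes(\text{anything})$ to be $G_r$-projective, which combined with the tilting property pins down the summand; the modification relative to Pillen's original (replacing $L((p^r-1)\rho+w_0(\lambda))$ by $\Delta((p^r-1)\rho+w_0(\lambda))$) only affects which filtration one dualizes and does not change the highest-weight bookkeeping.
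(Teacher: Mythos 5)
Your strategy hinges on the claim that $\St_r\otimes \Delta((p^r-1)\rho+w_0(\lambda))$ is a tilting module, and that is exactly where the argument breaks down. The Weyl filtration half is fine (tensor products of Weyl modules have Weyl filtrations, by dualizing Theorem~\ref{tensorproductgoodfilt}), but you never actually produce the good filtration: Theorem~\ref{tensorproductgoodfilt} requires \emph{both} tensor factors to have good filtrations, and $\Delta((p^r-1)\rho+w_0(\lambda))$ has one only in the degenerate case where it coincides with $\nabla((p^r-1)\rho+w_0(\lambda))$. Your proposed fix via contravariant duality only converts the statement that $\St_r\otimes\nabla(\nu)$ has a good filtration into the statement that $\St_r\otimes\Delta(\nu')$ has a Weyl filtration --- i.e., it re-derives the half you already have and says nothing about the other half. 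Whether $\St_r\otimes\Delta(\nu)$ has a good filtration for $\nu\in X_r$ is (by that same duality) equivalent to $\St_r\otimes\nabla(-w_0(\nu))$ being tilting, which is not an elementary fact; it is of the same depth as the $(p,r)$-filtration questions the paper is studying, so it cannot be taken as an input here.

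The paper's proof is engineered precisely to avoid asserting that the whole tensor product is tilting. Using Lemma~\ref{simplesummand} it writes $\Delta(-w_0(\lambda))\otimes\Delta((p^r-1)\rho+w_0(\lambda))\cong\St_r\oplus V$ with every weight of $V$ strictly below $(p^r-1)\rho$, so that $\St_r\otimes\Delta((p^r-1)\rho+w_0(\lambda))$ is a direct summand of $T((p^r-1)\rho+w_0(\lambda))\otimes(\St_r\oplus V)$; the indecomposable summand containing the $\hat{\lambda}$-weight space cannot land in $T((p^r-1)\rho+w_0(\lambda))\otimes V$ for weight reasons, hence is a summand of the tilting module $T((p^r-1)\rho+w_0(\lambda))\otimes\St_r$ and is therefore itself tilting --- but nothing is claimed about the remaining summands. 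Your highest-weight bookkeeping is correct (and your suspicion about the sign in the definition of $\hat{\lambda}_0$ is warranted: the highest weight here is $2(p^r-1)\rho+w_0(\lambda)$, which is the convention consistent with the rest of the paper), but without the tilting property of the full module that bookkeeping does not yield the conclusion.
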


\begin{proof}
First note that since $\St_r = L((p^r - 1)\rho) = \nabla((p^r - 1)\rho)$, we deduce from Lemma \ref{simplesummand} that $\St_r$ is a direct summand of both $\Delta(-w_0(\lambda)) \otimes \Delta((p^r - 1)\rho + w_0(\lambda))$ and $T((p^r - 1)\rho + w_0(\lambda))\otimes \Delta(-w_0(\lambda))$. So one can write 
$$\Delta(-w_0(\lambda))\otimes \Delta((p^r - 1)\rho + w_0(\lambda)) = \St_r\oplus V$$ 
where all weights of $V$ are strictly smaller than $(p^r-1)\rho$.

Thus we get that $\St_r\otimes \Delta((p^r - 1)\rho + w_0(\lambda))$ is a direct summand of $$W = T((p^r - 1)\rho + w_0(\lambda))\otimes \Delta(-w_0(\lambda))\otimes\Delta((p^r - 1)\rho + w_0(\lambda))$$ and by the above we have 
$$W = T((p^r - 1)\rho + w_0(\lambda)) \otimes (\St_r\oplus V) = T((p^r - 1)\rho + w_0(\lambda))\otimes \St_r\oplus T((p^r - 1)\rho + w_0(\lambda))\otimes V$$

Let $T$ be an indecomposable summand of $\St_r\otimes \Delta((p^r - 1)\rho + w_0(\lambda))$ containing the $\hat{\lambda}$-weightspace. The claim is now that $T$ is in fact isomorphic to $T(\hat{\lambda})$, and it is thus sufficient to show that $T$ is tilting.

By the previous considerations, $T$ is a direct summand of either $T((p^r - 1)\rho + w_0(\lambda))\otimes \St_r$ or $T((p^r - 1)\rho + w_0(\lambda))\otimes V$. But, since $T$ has a non-zero $\hat{\lambda}$-weight space, it cannot be a direct summand of the latter, where all the weights are strictly smaller than $\hat{\lambda}$ (since all weights of $V$ are strictly smaller than $(p^r - 1)\rho$). Hence, $T$ is a direct summand of $T((p^r - 1)\rho + w_0(\lambda))\otimes \St_r$, which is tilting, which means that $T$ itself is tilting.
\end{proof}

The following equivalent conditions should be compared to those in \cite[Theorem 2.4]{andersen01}, where the focus is on $\Ext$-groups involving various $T(\lambda)$, instead of the $\Delta^{(p,r)}(\lambda)$ used below. This difference is what enables us to obtain a condition only involving $\Ext^1$-groups, rather than having to involve all higher $\Ext$-groups.

\begin{theorem} \label{T:cohom-criteria} Let $M$ be a $G$-module with $\dim \operatorname{Hom}_{G}(\Delta^{(p,r)}(\lambda),M)< \infty$ for 
all $\lambda \in X_{+}$. The following are equivalent.  
\begin{itemize} 
\item[(a)] $\operatorname{St}_{r}\otimes M$ has a good filtration. 
\item[(b)] $\Hom_{G_{r}}(T(\hat{\lambda}_{0}),M)^{(-r)}$ has a good filtration for all $\lambda_{0}\in X_{r}$. 
\item[(c)] $\Ext_G^n(\Delta^{(p,r)}(\lambda),M) = 0$ for all $\lambda\in X_+$.  $n\geq 1$.
\item[(d)] $\Ext_G^1(\Delta^{(p,r)}(\lambda),M) = 0$ for all $\lambda\in X_+$.
\end{itemize} 
\end{theorem}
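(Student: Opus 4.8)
The plan is to establish the cycle $(a) \Rightarrow (b) \Rightarrow (c) \Rightarrow (d) \Rightarrow (a)$, exploiting the various reductions already available in the excerpt. First, for $(a) \Rightarrow (b)$: assume $\St_r \otimes M$ has a good filtration. By Lemma~\ref{directsummand}, $T(\hat\lambda_0)$ is a direct summand of $\St_r \otimes \Delta((p^r-1)\rho + w_0(\lambda_0))$, so $\Hom_{G_r}(T(\hat\lambda_0), M)$ is (up to a Frobenius twist and restriction functors) a direct summand of a $\Hom_{G_r}$-group built from $\St_r \otimes M$. More precisely, I would use that $\St_r$ is self-dual and $G_r$-injective/projective to rewrite $\Hom_{G_r}(\St_r \otimes \Delta(\cdots), M) \cong \Hom_{G_r}(\St_r, \Delta(\cdots)^* \otimes M) \cong \Hom_{G_r}(\Delta(\cdots)^*, \St_r \otimes M)$, and since $\St_r \otimes M$ has a good filtration, a composition-series/filtration argument on $\Delta(\cdots)^*$ shows this $G_r$-Hom, as a $G/G_r$-module, has a good filtration; then $\Hom_{G_r}(T(\hat\lambda_0), M)^{(-r)}$ inherits a good filtration as a direct summand (using Theorem~\ref{cohcrit} to pass to summands).

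For $(b) \Rightarrow (c)$: this is essentially Theorem~\ref{T:nabla-Deltacompare1} applied with general $M$ in place of $\nabla^{(p,r)}(\mu)$. Writing $\Delta^{(p,r)}(\lambda) = T(\hat\lambda_0) \otimes \Delta(\lambda_1)^{(r)}$ and running the Lyndon--Hochschild--Serre spectral sequence
\begin{equation*}
E_2^{i,j} = \Ext^i_{G/G_r}(\Delta(\lambda_1)^{(r)}, \Ext^j_{G_r}(T(\hat\lambda_0), M)) \Rightarrow \Ext^{i+j}_G(\Delta^{(p,r)}(\lambda), M),
\end{equation*}
the spectral sequence collapses since $T(\hat\lambda_0)$ is $G_r$-projective, giving $\Ext^n_G(\Delta^{(p,r)}(\lambda), M) \cong \Ext^n_{G/G_r}(\Delta(\lambda_1)^{(r)}, \Hom_{G_r}(T(\hat\lambda_0), M))$. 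Since $\Hom_{G_r}(T(\hat\lambda_0), M)^{(-r)}$ has a good filtration by hypothesis (b), and $\Delta(\lambda_1)$ is a Weyl module, Theorem~\ref{cohcrit} applied to the untwisted group $G$ forces this $\Ext^n$ to vanish for all $n \geq 1$. The implication $(c) \Rightarrow (d)$ is trivial.

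The main work is $(d) \Rightarrow (a)$, and I expect this to be the principal obstacle. The strategy is to invoke Theorem~\ref{altconditions}(b), so it suffices to show $\Ext^1_G(\Delta(\mu), \St_r \otimes M) = 0$ for all $\mu \in X_+$, equivalently (by the isomorphism in the proof of Theorem~\ref{altconditions}) that $\Ext^1_{G/G_r}(k, \Hom_{G_r}(\Delta(\mu), \St_r \otimes M)) = 0$. The key is to relate $\Hom_{G_r}(\Delta(\mu), \St_r \otimes M)$ to the modules $\Hom_{G_r}(T(\hat\lambda_0), M)$ appearing in (b), or alternatively to directly connect $\Ext^1_G(\Delta(\mu), \St_r \otimes M)$ with the groups $\Ext^1_G(\Delta^{(p,r)}(\lambda), M)$ from (d). The cleanest route is probably to first prove $(d) \Rightarrow (b)$: run the same collapsed spectral sequence as above, so that $(d)$ gives $\Ext^1_{G/G_r}(\Delta(\lambda_1)^{(r)}, \Hom_{G_r}(T(\hat\lambda_0), M)) = 0$ for all $\lambda_1 \in X_+$ and all $\lambda_0 \in X_r$; since $\{\Delta(\lambda_1)^{(r)} : \lambda_1 \in X_+\}$ are precisely the Weyl modules for the Frobenius-twisted copy $G/G_r \cong G$, Theorem~\ref{cohcrit} then yields that $\Hom_{G_r}(T(\hat\lambda_0), M)^{(-r)}$ has a good filtration, which is (b). Then I would close the cycle via $(b) \Rightarrow (c) \Rightarrow (a)$, where $(c) \Rightarrow (a)$ uses Lemma~\ref{directsummand} once more: since $T(\hat\lambda_0) \mid \St_r \otimes \Delta((p^r-1)\rho + w_0(\lambda_0))$ and $\Delta^{(p,r)}(\lambda) = T(\hat\lambda_0) \otimes \Delta(\lambda_1)^{(r)}$, vanishing of $\Ext^n_G(\Delta^{(p,r)}(\lambda), M)$ controls $\Ext^n_G(\Delta(\mu), \St_r \otimes M)$ for enough $\mu$ — here one must check that every Weyl module $\Delta(\mu)$ with $\mu \in X_+$ is, after tensoring with $\St_r$ and passing to direct summands, accounted for by the $\Delta^{(p,r)}(\lambda)$, using Steinberg's tensor product theorem on $\mu = \mu_0 + p^r\mu_1$ together with the $G_r$-summand structure of $\St_r \otimes \Delta(\mu_0)$. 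Verifying this last combinatorial/summand bookkeeping carefully, so that the finiteness hypothesis $\dim \Hom_G(\Delta^{(p,r)}(\lambda), M) < \infty$ is actually used to make the filtration arguments legitimate, is the delicate part.
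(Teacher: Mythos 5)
Your handling of the equivalence of (b), (c) and (d) --- collapsing the Lyndon--Hochschild--Serre spectral sequence using the $G_r$-projectivity of $T(\hat{\lambda}_0)$ and then invoking the cohomological criterion of Theorem~\ref{cohcrit} for the quotient $G/G_r\cong G$ --- is exactly the paper's argument and is correct. The two implications involving (a) contain genuine gaps. The main one is (c)$\Rightarrow$(a). The route you sketch tests $\St_r\otimes M$ against Weyl modules $\Delta(\mu)$ and hopes to account for $\St_r\otimes\Delta(\mu)$ by the $\Delta^{(p,r)}(\lambda)$ using Steinberg's tensor product theorem on $\mu=\mu_0+p^r\mu_1$. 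But Steinberg's theorem factors simple modules, not Weyl modules: $\Delta(\mu)$ is only a filtration quotient of $\Delta(\mu_0)\otimes\Delta(\mu_1)^{(r)}$, not a direct summand, and vanishing of $\Ext^1_G(-,M)$ does not pass from a module to its filtration quotients (the connecting homomorphisms out of $\Hom$ of the lower layers obstruct exactly the $\Ext^1$ you need). The missing idea is to switch test objects: the paper uses Ringel's criterion, namely that $\St_r\otimes M$ has a good filtration iff $\Ext^n_G(T(\lambda),\St_r\otimes M)=0$ for all $\lambda\in X_+$ and $n\geq 1$. Then $\Ext^n_G(T(\lambda),\St_r\otimes M)\cong\Ext^n_G(\St_r\otimes T(\lambda),M)$, and $\St_r\otimes T(\lambda)$ is again tilting and $G_r$-injective, hence a direct sum of modules $T((p^r-1)\rho+\sigma)$; each of these is a direct summand of $T((p^r-1)\rho+\sigma_0)\otimes T(\sigma_1)^{(r)}$ by \cite[Lemma II.E.9]{rags}, which in turn has a filtration with factors $T(\hat{\nu})\otimes\Delta(\gamma)^{(r)}=\Delta^{(p,r)}(\cdot)$ since $T(\sigma_1)$ has a Weyl filtration. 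Both direct summands and filtrations are compatible with the vanishing of all $\Ext^{\geq 1}$ (which is why condition (c), not (d), is the one used here), so (c) yields (a). Without moving to tilting test objects, the ``bookkeeping'' you defer cannot be completed.

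The secondary gap is (a)$\Rightarrow$(b). Your argument reduces to the claim that $\Hom_{G_r}(\Delta(\nu),\St_r\otimes M)^{(-r)}$ acquires a good filtration by ``a composition-series/filtration argument.'' This is precisely the kind of assertion that van der Kallen's example (cited at the end of the paper) shows to be false in general: $\Hom_{G_r}(k,N)^{(-r)}$ need not have a good filtration even when $N$ does, so one cannot filter and invoke good filtrations layer by layer. Indeed, the paper derives statement (b) for modules with a good filtration as a \emph{corollary} of the theorem and explicitly calls it surprising; it is not available as an input. The repair is to prove (a)$\Rightarrow$(c) directly instead: by Lemma~\ref{directsummand}, $\Delta^{(p,r)}(\lambda)$ is a direct summand of $\St_r\otimes\Delta((p^r-1)\rho+w_0(\lambda_0))\otimes\Delta(\lambda_1)^{(r)}$, and by adjunction $\Ext^n_G(\St_r\otimes\Delta(\mu_0)\otimes\Delta(\mu_1)^{(r)},M)\cong\Ext^n_G(\Delta(\mu_0),\St_r\otimes M\otimes\nabla(-w_0(\mu_1))^{(r)})$, which vanishes for $n\geq 1$ because $\St_r\otimes M\otimes\nabla(-w_0(\mu_1))^{(r)}$ has a good filtration by Proposition~\ref{reductiontosimple}.
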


\begin{proof} One can use the same argument as in Theorem~\ref{T:nabla-Deltacompare1} to deduce that 
\begin{equation} 
\Ext_G^n(\Delta^{(p,r)}(\lambda),M)\cong \text{Ext}_{G/G_{r}}^{n}(\Delta(\lambda_{1})^{(r)}, \text{Hom}_{G_{r}}(T(\hat{\lambda}_{0}),M))
\end{equation} 
for all $\lambda=\lambda_{0}+p^{r}\lambda_{1} \in X_+$, $n\geq 0$. 
The equivalence of (b), (c), and (d) now follows by \cite[Proposition II.4.16]{rags}.  We now will show that (a) if and only if (c). 

Suppose that (a) holds. Note that $T(\hat{\lambda}_0)$ is a direct summand of $\St_r\otimes\Delta((p^r - 1)\rho + w_0(\lambda_0))$ for any $\lambda_0\in X_r$ by Lemma \ref{directsummand}. Hence, for any $\lambda = \lambda_0 + p^r\lambda_1\in X_+$ with $\lambda_0\in X_r$ we get that $\Delta^{(p,r)}(\lambda) = T(\hat{\lambda}_0)\otimes \Delta(\lambda_1)^{(r)}$ is a direct summand of $\St_r\otimes\Delta((p^r - 1)\rho + w_0(\lambda_0))\otimes \Delta(\lambda_1)^{(r)}$, and it 
is therefore sufficient to show that $\Ext_G^n(\St_r\otimes \Delta(\mu_0)\otimes\Delta(\mu_1)^{(r)},M) = 0$ for any $\mu_0\in X_r$, $\mu_1\in X_+$and $n\geq 1$. 
Since $\St_r\otimes M$ has a good filtration, so does $\St_r\otimes M\otimes \nabla(\mu)^{(r)}$ for any $\mu\in X_+$ by 
Proposition~\ref{reductiontosimple}, and thus we get $$\Ext_G^n(\St_r\otimes \Delta(\mu_0)\otimes \Delta(\mu_1)^{(r)},M)
\cong \Ext_G^n(\Delta(\mu_0),\St_r\otimes M\otimes \nabla(-w_0(\mu_1))^{(r)}) = 0$$ which proves that (c) holds.

On the other hand, assume that (c) holds. We will employ Ringel's criterion for the existence of good filtrations, that is $N$ has a good filtration if and only if $\Ext_G^n(T(\lambda),N) = 0$ for all dominant $\lambda$ and all $n\geq 1$ (for a proof, see \cite[Theorem 2.2]{andersen01}). So it suffices to show that $\Ext_G^n(\text{St}_r \otimes T(\lambda),M) = 0$ for all $\lambda\in X_{+}$, $n\geq 1$. 
Observe that $\text{St}_r \otimes T(\lambda)$ is a direct sum of $T(\mu)$'s with each $\mu$ of the form $(p^r - 1)\rho + \sigma$ for dominant $\sigma$ 
(since these summands are injective as $G_r$-modules), and if we write $\sigma=\sigma_0 + p^r\sigma_1$ then $T(\mu)$ is a direct summand of $T((p^r - 1)\rho + \sigma_0) \otimes T(\sigma_1)^{( r)}$ by \cite[Lemma II.E.9]{rags}. 
The tilting module $T(\sigma_1)$ has a Weyl-filtration, thus $T(\mu)$ is a direct summand of a module which has a filtration with factors of the form $T(\hat{\nu}) \otimes \Delta(\gamma)^{(r)}$ where 
$\nu=(p^r - 1)\rho + w_0(\sigma_{0})\in X_r$. Since $\Ext_G^n(T(\hat{\nu}) \otimes \Delta(\gamma)^{(r)},M) = 0$ by assumption for all 
restricted $\nu$ and all dominant $\gamma$, this shows that $\text{Ext}_G^n(T(\mu),M) = 0$ for $n\geq 1$. 
Hence, $\Ext_G^n(T(\lambda),\text{St}_r \otimes M) = 0$, so $\text{St}_r \otimes M$ has a good filtration.

\end{proof} 

Observe that if Conjecture~\ref{donkinconj} holds then the conditions in Theorem~\ref{T:cohom-criteria} would be equivalent to a rational $G$-module $M$ admitting a good 
$(p,r)$-filtration. 

\subsection{}We now present a surprising consequence of the aforementioned theorem. 

\begin{corollary} Let $M$ be a $G$-module which admits good filtration. Then 
$\Hom_{G_{r}}(T(\hat{\lambda}_{0}),M)^{(-r)}$ has a good filtration for all $\lambda_{0}\in X_{r}$.  
\end{corollary}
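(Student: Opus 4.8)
The plan is to derive the corollary directly from Theorem~\ref{T:cohom-criteria} by exhibiting a good filtration on $\St_{r}\otimes M$ and then invoking the implication (a)$\Rightarrow$(b) of that theorem. First I would record that $\St_{r}=\nabla((p^{r}-1)\rho)$ trivially has a good filtration, so by Theorem~\ref{tensorproductgoodfilt} the tensor product $\St_{r}\otimes M$ has a good filtration whenever $M$ does. Thus condition (a) of Theorem~\ref{T:cohom-criteria} holds for our $M$, and condition (b) of that theorem is precisely the statement that $\Hom_{G_{r}}(T(\hat{\lambda}_{0}),M)^{(-r)}$ has a good filtration for every $\lambda_{0}\in X_{r}$. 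In other words, once the machinery of Theorem~\ref{T:cohom-criteria} is available, this is a one-line deduction, which is exactly what makes the statement mildly surprising: a hypothesis purely at the level of $G$ forces a good-filtration property of certain $G_{r}$-cohomology modules.

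The only point requiring care is the standing hypothesis of Theorem~\ref{T:cohom-criteria} that $\dim\Hom_{G}(\Delta^{(p,r)}(\lambda),M)<\infty$ for all $\lambda\in X_{+}$; this is automatic when $M$ is finite-dimensional, which is the case of interest. For a general $M$ admitting a good filtration $M_{0}\subseteq M_{1}\subseteq\cdots$ with $\bigcup_{i}M_{i}=M$, I would reduce to the finite-dimensional case: one may arrange each $M_{i}$ to be a finite-dimensional $G$-module with a good filtration, and the argument of the previous paragraph applies to each $M_{i}$, giving that $\Hom_{G_{r}}(T(\hat{\lambda}_{0}),M_{i})^{(-r)}$ has a good filtration for every $i$ and every $\lambda_{0}\in X_{r}$. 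Since $T(\hat{\lambda}_{0})$ is finite-dimensional, $\Hom_{G_{r}}(T(\hat{\lambda}_{0}),-)$ commutes with the directed union $M=\bigcup_{i}M_{i}$, and since $\Delta(\mu)$ is finite-dimensional the functor $\Ext_{G}^{1}(\Delta(\mu),-)$ commutes with this filtered colimit; hence $\Ext_{G}^{1}(\Delta(\mu),\Hom_{G_{r}}(T(\hat{\lambda}_{0}),M)^{(-r)})=\varinjlim_{i}\Ext_{G}^{1}(\Delta(\mu),\Hom_{G_{r}}(T(\hat{\lambda}_{0}),M_{i})^{(-r)})=0$ for all $\mu\in X_{+}$, and Theorem~\ref{cohcrit} concludes that $\Hom_{G_{r}}(T(\hat{\lambda}_{0}),M)^{(-r)}$ has a good filtration.

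There is no serious obstacle in this proof: the entire content sits in Theorem~\ref{T:cohom-criteria} together with the permanence of good filtrations under tensor products (Theorem~\ref{tensorproductgoodfilt}). The only mild subtlety is the bookkeeping around the finiteness hypothesis for non-finite-dimensional $M$, which the colimit argument above disposes of; alternatively, one may simply state and prove the corollary for finite-dimensional $M$, in which case the proof collapses to the two sentences of the first paragraph.
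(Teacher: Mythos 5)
Your proof is correct and is essentially the paper's own argument: $\St_r\otimes M$ has a good filtration by Theorem~\ref{tensorproductgoodfilt}, and then (a)$\Rightarrow$(b) of Theorem~\ref{T:cohom-criteria} gives the claim. Your additional care regarding the finiteness hypothesis $\dim\Hom_G(\Delta^{(p,r)}(\lambda),M)<\infty$ (handled by the colimit reduction) is a reasonable supplement that the paper leaves implicit, but it does not change the route.
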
 

\begin{proof}. If $M$ admits a good filtration then $\text{St}_{r}\otimes M$ admits a good filtration. The result follows from Theorem~\ref{T:cohom-criteria}. 
\end{proof} 

It is interesting to note that this results does not hold in general for arbitrary tilting modules. For example, for $T(0)\cong k$, van der Kallen \cite{vanderkallen93} produced an example of a rational $G$-module $M$
admitting a good filtration such that $\text{Hom}_{G_{r}}(k,M)^{(-r)}$ does not admit a good filtration. 

\subsection{Donkin's $(p,r)$-Conjecture, redux} We can now prove that the verification of Donkin's Tilting Module Conjecture 
guarantees that the ``if'' direction of Donkin's $(p,r)$-Filtration Conjecture holds over fields of arbitrary characteristic. 

\begin{theorem} \label{T:DonkinConjecture2(-->)} Assume that any one of the following conditions holds for $G$. 
\begin{itemize} 
\item[(a)] $T(\hat{\delta})=Q_{r}(\delta)$ for all $\delta\in X_{r}$ 
\item[(b)] $\Hom_{G_{r}}(T(\hat{\delta}),L(\tau))$ has a good filtration for $\delta,\tau \in X_{r}$ 
\item[(c)] $p\geq 2(h-1)$. 
\end{itemize} 
If $M$ has a good $(p,r)$-filtration then $\St_r\otimes M$ has a good filtration. 
\end{theorem}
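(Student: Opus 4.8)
The plan is to derive the theorem from the cohomological criterion Theorem~\ref{T:cohom-criteria} together with the reduction in Theorem~\ref{ifDonkinequivalent}, after observing that hypothesis $(c)$ implies $(a)$ and $(a)$ implies $(b)$, so that it suffices to treat the case $(b)$.

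First I would record the two implications. For $(c)\Rightarrow(a)$: when $p\geq 2(h-1)$ Donkin's Tilting Module Conjecture is known to hold, so $T(\hat{\delta})\cong Q_{r}(\delta)$ for all $\delta\in X_{r}$ (this is exactly the identification already invoked in the proof of Corollary~\ref{C:nabla-Deltacompare2}). For $(a)\Rightarrow(b)$: granting $T(\hat{\delta})\cong Q_{r}(\delta)$, the space $\Hom_{G_{r}}(T(\hat{\delta}),L(\tau))=\Hom_{G_{r}}(Q_{r}(\delta),L(\tau))$ has dimension equal to the multiplicity of $L(\delta)$ as a $G_{r}$-composition factor of $L(\tau)$; since $\delta,\tau\in X_{r}$ the restriction $L(\tau)|_{G_{r}}$ is simple, so this multiplicity is $1$ if $\delta=\tau$ and $0$ otherwise. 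Hence $\Hom_{G_{r}}(T(\hat{\delta}),L(\tau))^{(-r)}$ is a rational $G$-module of dimension $0$ or $1$, so it is either $0$ or $k\cong\nabla(0)$, and in either case it has a good filtration. Thus $(b)$ holds.

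Now assuming $(b)$, fix $\delta\in X_{r}$ and apply Theorem~\ref{T:cohom-criteria} to the finite-dimensional module $M=L(\delta)$, for which the finiteness hypothesis is automatic. Writing $L(\delta)$ as $L(\delta)\otimes\nabla(0)^{(r)}$ so that $\delta_{0}=\delta$ and $\delta_{1}=0$, condition $(b)$ of that theorem asks precisely that $\Hom_{G_{r}}(T(\hat{\lambda}_{0}),L(\delta))^{(-r)}$ have a good filtration for all $\lambda_{0}\in X_{r}$, which is exactly our hypothesis $(b)$ (with $\tau=\delta$). Therefore condition $(a)$ of Theorem~\ref{T:cohom-criteria} holds, i.e. $\St_{r}\otimes L(\delta)$ has a good filtration, for every $\delta\in X_{r}$. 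This is condition $(b)$ of Theorem~\ref{ifDonkinequivalent}, whose equivalence with condition $(c)$ there gives the conclusion: if $M$ has a good $(p,r)$-filtration then $\St_{r}\otimes M$ has a good filtration.

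There is no single difficult step; the substantive content lies in Theorem~\ref{T:cohom-criteria} and the direct-summand Lemma~\ref{directsummand} behind it. The only points requiring care are the nesting of the three hypotheses — in particular the identification $\Hom_{G_{r}}(Q_{r}(\delta),L(\tau))^{(-r)}\cong k$ or $0$ and the fact that a one-dimensional rational $G$-module is trivial — and matching hypothesis $(b)$ of the theorem verbatim with condition $(b)$ of Theorem~\ref{T:cohom-criteria} applied to $M=L(\delta)$. Alternatively, one can bypass Theorem~\ref{ifDonkinequivalent} and apply Theorem~\ref{T:cohom-criteria} directly to $M$: since each $T(\hat{\lambda}_{0})$ is projective over $G_{r}$, applying $\Hom_{G_{r}}(T(\hat{\lambda}_{0}),-)^{(-r)}$ to a good $(p,r)$-filtration of $M$ gives a filtration with sections $\Hom_{G_{r}}(T(\hat{\lambda}_{0}),L(\mu_{0}))^{(-r)}\otimes\nabla(\mu_{1})$, each of which has a good filtration by $(b)$ and Theorem~\ref{tensorproductgoodfilt}, so by Theorem~\ref{cohcrit} the whole module does, and Theorem~\ref{T:cohom-criteria} then produces a good filtration on $\St_{r}\otimes M$.
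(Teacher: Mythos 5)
Your argument is correct, and it reaches the conclusion by a slightly different route than the paper. You first collapse the three hypotheses into (b) — checking (c)$\Rightarrow$(a) via the tilting module conjecture and (a)$\Rightarrow$(b) via the computation $\Hom_{G_r}(Q_r(\delta),L(\tau))\cong k$ or $0$, exactly as the paper does inside Corollary~\ref{C:nabla-Deltacompare2} — and then feed condition (b) of Theorem~\ref{T:cohom-criteria} with $M=L(\delta)$, finishing with the reduction Theorem~\ref{ifDonkinequivalent}. The paper instead stays with a general $M$ admitting a good $(p,r)$-filtration: it invokes Theorem~\ref{T:nabla-Deltacompare1} to get $\Ext^n_G(\Delta^{(p,r)}(\lambda),\nabla^{(p,r)}(\mu))=0$ for $n\geq 1$, runs the long exact sequence along the filtration of $M$ to conclude $\Ext^n_G(\Delta^{(p,r)}(\lambda),M)=0$, and then enters Theorem~\ref{T:cohom-criteria} through condition (c) rather than (b). The two arguments are close cousins — your alternative sketch at the end (applying the exact functor $\Hom_{G_r}(T(\hat{\lambda}_0),-)^{(-r)}$ to the $(p,r)$-filtration of $M$ and using Theorem~\ref{tensorproductgoodfilt}) is essentially the content of the paper's Theorem~\ref{T:nabla-Deltacompare1} unwound — but your main route has the mild advantage of needing the criterion only for the finite-dimensional modules $L(\delta)$, where the hypothesis $\dim\Hom_G(\Delta^{(p,r)}(\lambda),M)<\infty$ is automatic, at the cost of importing Theorem~\ref{ifDonkinequivalent} (hence Propositions~\ref{induction} and~\ref{reductiontosimple}). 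One cosmetic point: hypothesis (b) of the statement omits the $(-r)$ untwist that appears in Theorem~\ref{T:cohom-criteria}(b); you silently reconcile the two, which is harmless since a good filtration for the $G/G_r$-module is the same thing as one for its untwist.
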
 

\begin{proof} Any one of the conditions insures that $\Hom_{G_{r}}(T(\hat{\delta}),L(\tau))$ has a good filtration for $\delta,\tau \in X_{r}$. 
Therefore, $\text{Ext}^{n}_{G}(\Delta^{(p,r)}(\lambda),\nabla^{(p,r)}(\mu)) = 0$ for $\lambda,\mu\in X_{+}$, $n\geq 1$ by Theorem~\ref{T:nabla-Deltacompare1}. 
It follows that if $M$ has a good $(p,r)$-filtration then $\text{Ext}^{n}_{G}(\Delta^{(p,r)}(\lambda),M) = 0$ for $\lambda\in X_{+}$, $n\geq 1$. Consequently, 
$\text{St}_{r}\otimes M$ has a good filtration by Theorem~\ref{T:cohom-criteria}.
\end{proof}


\providecommand{\bysame}{\leavevmode\hbox
to3em{\hrulefill}\thinspace}

\end{document}